\pgfplotsset{width=10cm,compat=1.9}
\pgfplotsset{every tick label/.append style={font=\tiny}}
\theoremstyle{plain}
\newtheorem{proposition}{Proposition}[section]
\newtheorem{remark}{Remark}[section]
\theoremstyle{remark}
\newtheorem{definition}{Definition}
\newcommand{\norm}[1]{\|{#1}\|}
\newcommand{\SPAN}[1]{\mbox{span}\,({#1})}
\newcommand{\as}{\ensuremath{\alpha^s}}
\newcommand{\us}{\ensuremath{u^s}}
\newcommand{\ex}{\ensuremath{\mathrm{ex}}}
\newcommand{\der}{\ensuremath{\mathrm{d}}}
\title{Physics-based adaptivity of a spectral method for the
    Vlasov--Poisson equations based on the asymmetrically-weighted
    Hermite expansion in velocity space}
\author[]{%
Cecilia Pagliantini\thanks{\textit{Corresponding author}. Centre for Analysis, Scientific computing and Applications,
Eindhoven University of Technology,
5600 MB Eindhoven, The Netherlands.
Email: \texttt{c.pagliantini@tue.nl}}\;,
Gian Luca Delzanno\thanks{T-5 ``Applied Mathematics and Plasma Physics Group'',
Los Alamos National Laboratory, Los Alamos, NM 87545, USA.
Email: \texttt{delzanno@lanl.gov}}\;, and
Stefano Markidis\thanks{KTH Royal Institute of Technology, Stockholm, Sweden.
Email: \texttt{stefano.markidis@kth.se}}
}
\date{}
\begin{document}
\maketitle

\begin{abstract}
  We propose a spectral method for the 1D-1V Vlasov--Poisson system
  where the discretization in velocity space is based on
  asymmetrically-weighted Hermite functions, dynamically adapted via a scaling
  $\alpha$ and shifting $u$ of the velocity variable.
  Specifically, at each time instant an adaptivity criterion selects
  new values of $\alpha$ and $u$ based on the numerical solution of
  the discrete Vlasov--Poisson system obtained at that time step.
  Once the new values of the Hermite parameters $\alpha$ and $u$ are
  fixed, the Hermite expansion is updated and the discrete system is further evolved for the next time step.
  The procedure is applied iteratively over the desired temporal
  interval.
  The key aspects of the adaptive algorithm are: the map between
  approximation spaces associated with different values of the Hermite
  parameters that preserves total mass, momentum and energy; and the
  adaptivity criterion to update $\alpha$ and $u$ based on physics
  considerations relating the Hermite parameters to the
  average velocity and temperature of each plasma species.
  For the discretization of the spatial coordinate, we rely on Fourier functions and use
  the implicit midpoint rule for time stepping.
  The resulting numerical method possesses intrinsically the property of
  fluid-kinetic coupling, where the low-order terms of the expansion are akin to the fluid moments of a macroscopic description of the plasma, while kinetic physics is retained by adding more spectral terms.
  Moreover, the scheme features conservation of total mass, momentum and
  energy associated in the discrete, for periodic boundary conditions.
  A set of numerical experiments confirms
  that the adaptive method outperforms the non-adaptive one in terms of accuracy and
  stability of the numerical solution.
\end{abstract}
  
\medskip
\noindent
\textit{Keywords}.
    Vlasov--Poisson equations,
    spectral method,
    AW Hermite discretization,
    adaptive coefficients

\section{Introduction}
Many problems in plasma physics require the numerical solution of the
kinetic Vlasov--Maxwell equations which describe microscopic physics
via the seven-dimensional (three spatial and three velocity
coordinates plus time) phase space density of the plasma. 
Important examples where the kinetic physics is essential include
mechanisms of energy conversion in the Earth's magnetosphere
\cite{shields,reeves2013}, and in the solar corona \cite{Aschwanden2005}, in the
solar wind \cite{Bruno2005}.

The Vlasov--Maxwell equations are challenging to solve numerically because of their high dimensionality, nonlinearities and large spatial and
temporal scale separation. 
The latter already occurs at the microscopic level, due to the large
difference in mass between electrons and ions, and becomes enormous
when one compares microscopic scales with the characteristic scales of
systems of interest. 
While the development of numerical methods for the solutions of the
Vlasov--Maxwell equations is a very vibrant research area in its own
right, the development of methods that accurately couple microscopic
and macroscopic scales (i.e. the so-called fluid-kinetic coupling)
remains a major challenge for computational plasma physics.

In recent years there has been a renewed interest in the development of spectral methods for multi-scale plasma physics applications, owing in part to their interesting properties in terms of micro/macro coupling~\cite{CDLD06,VD15}. These methods, also called transform methods, expand the plasma
distribution function in suitable basis functions
\cite{EFMO63,Armstrong:1970,Gajewski-Zacharias:1977,K83,H96,SH98,
  NRxx,%
  CDMB16,
  GLD15,%
  VD15,%
  parker_dellar_2015,
  Manzini-Delzanno-Markidis-Vencels:2016,%
  Loureiro16,
  Manzini-Funaro-Delzanno:2017,%
  cai2018,%
  Fatone-Funaro-Manzini:2019,%
  Pezzi_2019,%
  Di2019, KY21}. With a suitable spectral basis (i.e. an asymmetrically-weighted Hermite expansion or a Legendre expansion), the low-order moments of the expansion reproduce the macroscopic/large-scale behavior of the plasma while the kinetic physics can be captured by adding more moments only where and when necessary. Hence, spectral methods possess intrinsic fluid-kinetic coupling. Some recent plasma physics applications of spectral methods that were too computationally expensive for traditional methods can be found in \cite{DR2019,RD2019}.

The majority of work on spectral methods for the Vlasov--Poisson problem to date has focused on spectral expansions based on
Hermite functions in velocity space.
Since the latter are naturally linked to Maxwellian
distribution functions, this approach is
well suited to capture the solution behavior
of problems where the plasma distribution function remains nearly
Maxwellian.
Conversely, problems where the plasma develops a
strong non-Maxwellian behavior might require an undesirably large
number of expansion terms for convergence to an accurate solution.
Grad \cite{Gradb,Grada} was the first to apply Hermite functions with a drift and scaling parameter. Schumer and Holloway \cite{SH98} then showed that a
suitable shifting and rescaling of the argument of the Hermite basis
can significantly improve the convergence of the expansion. 
This was exploited by Camporeale et al. \cite{CDLD06} to find eigenvalues of the linear stability problem of the Vlasov--Maxwell
equations (for the Harris-sheet equilibrium configuration). 
The shift of the argument of the Hermite functions can be interpreted physically as centering the distribution function with the local mean flow for each plasma species. The rescaling of the argument of the Hermite functions can instead be related physically to the temperature or thermal velocity of each species. Since plasmas are characterized by complex dynamics and can develop strong plasma flows and/or heating/cooling locally, it is therefore natural to expect that adapting the argument of the Hermite basis functions to the local plasma conditions could considerably improve the convergence of the spectral expansion. For instance, a drifting Maxwellian distribution function with drift velocity $u$ could be represented exactly with only one term in a Hermite expansion centered at $u$, while an expansion centered at $u_0\ne u$ would require many more terms (more if $|u-u_0|$ is larger) for a given accuracy.
However, the majority of existing time-dependent spectral approaches for plasma models that allow for shifting and rescaling coefficients of the Hermite expansion basis are limited to global constant coefficients, specified by the user at the beginning of the simulation, and are not adapted to the changing conditions in the simulations. In \cite{KY21}, the authors introduce a weighted Galerkin method with two separate scaling parameters for the Hermite polynomials that are fixed in time. In \cite{Baranger}, a scaling and shifting of the distribution function are used as indicator for an automatic construction of a locally refined velocity grid. In \cite{Sarna18}, the authors proposed a Hermite discretization of the Boltzmann equation with Hermite polynomials scaled by the initial temperature. More recently, in \cite{FFM22}, the choice of the Hermite parameters is done via machine learning techniques but only for simple test cases and not for plasma applications.

To the best of our knowledge, only few spectral methods for plasma physics allow for the adaptivity of the Hermite expansion. In \cite{FilbetBesse} a scaling parameter is introduced to ensure stability of the velocity discretization in a weighted $L^2$-norm.
The approach based on the regularized moment method called NR$xx$ \cite{NRxx}, together with its recent extension~\cite{NRxx2}, consists in rewriting the Vlasov equation in a co-moving frame given by the local macroscopic velocity and scaled by the square root of the local temperature, with both quantities given by self-consistent, time-dependent, nonlinear partial differential equations.
In \cite{FilbetRusso,FilbetRey} a rescaling of the distribution function in velocity is performed in order to treat strong non homogeneity in collisional kinetic models and the scaling parameters are updated based on the equation's moments.

In this paper, we consider a different approach which is much simpler from an algorithmic point of view and has different numerical properties. We expand the Vlasov equation with Hermite basis functions where the shifting $u$ and rescaling $\alpha$ coefficients are constant. However, since $u$ and $\alpha$ are related to the first and second moment of the distribution function, we monitor their time variation as we evolve the Vlasov equation numerically and when their change is bigger than a certain threshold, we perform a
transformation from the old basis to a new basis obtained with the updated $u$ and $\alpha$. This transformation is performed \textit{at a fixed instant of time}, which is crucial to avoid errors of order unity in the algorithm. This allows us to include the time adaptivity of the Hermite expansion in the algorithm without having to deal with a much more complex, nonlinear form of the Vlasov equation. Additionally, the properties of the numerical algorithm are also different from the NR$xx$ method and its extension.
We use a Fourier discretization in physical space and an implicit time stepping technique and demonstrate that the resulting numerical algorithm leads to the conservation of the discrete total mass, momentum, and energy (for periodic boundary conditions) of the system. Conversely, the NR$xx$ method for the Vlasov--Maxwell equations can only conserve the discrete total mass and momentum, while its recent extension can only conserve the discrete total mass and energy of the system. Finally, similar to the NR$xx$ method and its extension, we use a physics-based criteria to adapt the $u$ and $\alpha$ coefficients in time, exploiting the fact that they are related to the average velocity and temperature of each plasma species, i.e., the first and second moments of the plasma distribution function. However, while the physics-based choice is embedded in formulation of the NR$xx$ method,
in our case the prescription of how $u$ and $\alpha$ change is flexible and completely decoupled from the underlying model equations, and one could envision other possibilities such as adopting a mathematically-based criteria based on some form of error minimization.
Application of our new method to standard plasma physics test problems 
demonstrates the conservation properties of the algorithm and that the approach can indeed increase the accuracy of the
simulations relative to a non-adaptive approach.

The paper is organized as follows.
In~Section~\ref{sec:problem:formulation}, we introduce the model
problem, which is governed by the system of Vlasov--Poisson equations.
In Section~\ref{sec:Opt}, we present the adaptive framework and the
physically-based strategy that we use to adapt the Hermite parameters.
In Section~\ref{sec:conservation:properties}, we investigate the
conservation properties of the adaptive fully discrete scheme and
prove that it guarantees exact conservation of mass, momentum and
energy.
In Section~\ref{sec:implementation}, we provide implementation
details, and we discuss the computational cost associated with
updating the Hermite parameters.
In Section~\ref{sec:NumExp}, we explore the behavior of the adaptive
fully discrete scheme on a number of manufactured solution tests and
assess its performance on the two-stream instability.
In Section~\ref{sec:concluding:remarks}, we offer our final remarks
and conclusions.

\section{Problem formulation}
\label{sec:problem:formulation}
The focus of this work is on the electrostatic limit of the
Vlasov--Poisson equations with the idea that a similar
adaptive scheme can be designed \textit{mutatis mutandis} for the general case
of the Vlasov--Maxwell equations.
The Vlasov--Poisson system describes the dynamics of a collisionless
magnetized plasma under the action of the self-consistent electric
field.
We focus on a two species plasma consisting of electrons
(labeled by $``\,e\,"$) and singly-charged ions $(``\,i\,")$, whose
evolution at any time $t\in T\subset\mathbb{R}$ is described in terms
of the distribution function $f^s(t,x,v)$ for the plasma species
  $s\in\{e,i\}$.
  Here, $x\in\Omega_x$ is the spatial coordinate and $v\in\Omega_v$ is
  the velocity coordinate.
More in details, let the Cartesian phase space domain be
  denoted by $\Omega=\Omega_x\times\Omega_v\subset\mathbb{R}^2$
with $\Omega_x:=[0,L]$,
$L$ being the length of the spatial domain, and
$\Omega_v=\mathbb{R}$.

Let $L^2(\Omega)$ denote the standard Hilbert space of square integrable functions with inner product $(f,g)_{\Omega}:=\int_{\Omega}
fg \,\der x\der v$. We introduce the spaces of $L^2$ functions
which are periodic in the spatial coordinate and tend asymptotically to zero at infinity in velocity space, namely
\begin{equation*}
  \begin{aligned}
    & V:=\{f(t,\cdot,\cdot)\in L^2(\Omega):\;f>0,\; f(t,0,\cdot)=f(t,L,\cdot),\;
    f(t,\cdot,v)\rightarrow 0\,\mbox{ as }\,|v|\to\infty\},\\
    & W:=\{g(t,\cdot)\in L^2(\Omega_x):\; g(t,0)=g(t,L)\}.
  \end{aligned}
\end{equation*}
The one-dimensional Vlasov--Poisson problem in $\Omega\times T$ reads:
For $f^s_0\in V_{|_{t=0}}$, find $f^s(t,x,v)\in C^1(T;L^2(\Omega))\cap
C^0(T;V)$, and $E(t,x)\in C^0(T;W)$ such that
\begin{equation}\label{eq:VlasovPoisson}
\left\{  \begin{aligned}
    &\partial_tf^s+v\,\partial_x f^s + \dfrac{q^s}{m^s} E\,\partial_v f^s=0, &\qquad\mbox{in}\;T\times\Omega,\;\forall s,\\
    &\partial_x E=\sum_{s}q^s \int_{\Omega_v}f^s\, \der v,                        &\qquad\mbox{in}\;T\times\Omega_x,\\
    &f^s(0,x,v)=f^s_0,														&\qquad\mbox{in}\;\Omega,
  \end{aligned}\right.
\end{equation}
where $q^s$ and $m^s$ denote the charge and mass of the species $s$,
respectively.

We normalize the model equations as follows. Time $t$ is normalized to the electron plasma frequency $\omega_{pe}=\sqrt{e^2 n^{e}_0\slash{}(\varepsilon_0m^e)}$, where $e$ is the elementary charge, $m^e$ is the electron mass, $\varepsilon_0$ is the permittivity of vacuum, and $n^e_0$ is a reference electron density. The velocity coordinate $v$ is
normalized to the electron thermal velocity $v_{t}=\sqrt{k T_e/m_e}$, where $k$ is the Boltzmann constant and $T_e$ is a reference electron temperature; the spatial coordinate $x$ is normalized to the electron Debye length $\lambda_D=\sqrt{\varepsilon_0 k T_e \slash{}(e^2 n_0^e)}$.  We normalize the charge $q_s$ and the mass $m_s$ to the elementary charge $e$ and the mass $m^e$, respectively. For simplicity, we keep the same symbols to denote dimensional and normalized quantities but in what follows we will focus on normalized quantities only.

We refer to the classic book of Glassey~\cite{Glassey96} for wellposedness results and the
analysis of the Cauchy problem for the Vlasov--Poisson system.

\subsection{Hermite spectral discretization in velocity}
For the numerical approximation of the Vlasov--Poisson problem
\eqref{eq:VlasovPoisson}, we pursue an Eulerian method of lines
approach.
We rely on the numerical scheme proposed in \cite{GLD15} by using
spectral methods in phase space and the implicit midpoint rule as time-stepping.
Observe that the adaptive method proposed in this work can accommodate
other spatial and temporal discretizations.
We summarize the main ideas of the numerical approximation in velocity in this
subsection. Details on the spatial discretization used for the numerical tests of Section~\ref{sec:NumExp} are given in Section~\ref{sec:appA}.

Let $\{\psi^{\alpha^s,u^s}_n\}_{n\in\mathbb{N}}$ be the family
of generalized \emph{asymmetrically weighted} (AW) Hermite functions,
\begin{equation}\label{eq:psi}
  \psi^{\as,\us}_n(v):=(\pi 2^n n!)^{-1/2} H_n(\xi) e^{-\xi^2},
  \quad
  \xi(v):=\dfrac{v-\us}{\as},\qquad\as,\,\us\in \mathbb{R},\,\as>0,
\end{equation}
where $H_n$ is the $n$-th Hermite polynomial, and
$\alpha^s$, $u^s$ correspond to a scaling and a shift of the Hermite function, respectively.
The Hermite functions satisfy the orthogonality relations
\begin{equation}\label{eq:HForth}
\int_{\mathbb{R}}\psi_n^{\alpha^s,u^s}(v)\,\psi_m^{\alpha^s,u^s}(v)\,\sqrt{\pi}(\alpha^s)^{-1}e^{\,\xi^2}\,\der v=\delta_{n,m},\qquad
 \forall\,n,m\in\mathbb{N}.
\end{equation}

Let 
$\xi_j=\xi_j(v):=(v-\us_j)/\as_j$ with $\as_j, \us_j\in\mathbb{R}$. 
Based on the Hermite functions, we define the finite
dimensional approximation spaces
\begin{equation}\label{eq:VN}
  \begin{aligned}
    & V^N_{j}:=\SPAN{\{\psi^{\as_j,\us_j}_n(v)\}_{n\in\Lambda_{N_v}}},
    \qquad
    \widehat{V}^N_{j}:=\SPAN{\{(\pi 2^n n!)^{-1/2} H_n(\xi_j)\}_{n\in\Lambda_{N_v}}},
  \end{aligned}
\end{equation}
where $\Lambda_{N_v}:=\{n\in\mathbb{N}:\,0\leq n\leq N_v\}$.
Here $\widehat{V}^N_{j}$ corresponds to the dual space of $V^N_{j}$.

Let $T_h$ be a non-uniform partition of the temporal interval
$T\subset\mathbb{R}$.
Let us define the set of temporal indices as
$\Lambda_{N_t}:=\{j\in\mathbb{N}:\, 0\leq j\leq N_t-1,\,
N_t\in\mathbb{N}\}$
so that $T_h=\bigcup_{j\in\Lambda_{N_t}} T_j$
where $T_{j}:=(t^{j},t^{j+1}]$.
Let us denote with $\Delta t_j:=|T_j|$ the local time step.

We pursue a Galerkin spectral discretization of
\eqref{eq:VlasovPoisson} in the velocity variable, where we omit the superscript $s$ for the sake of readability.
The semi-discretized problem in each time interval $T_j$, $j\in\Lambda_{N_t}$, reads:
given the initial condition $(f_{j}^N,E^N_{j})\in (V^N_{j}\times W)\times W$,
find $(f_{j+1}^N,E^N_{j+1})\in (V^N_{j}\times W)\times W$
  such that
\begin{equation}\label{eq:VFhL2w}
\left\{
\begin{aligned}
    & \big(f_{j+1}^N-f_{j}^N,h^N\big)_{\Omega_v} +
    \Delta t_j \big(v\partial_x f_{j+1/2}^N + \dfrac{q}{m} E^N_{j+1/2}\,\partial_v f_{j+1/2}^N,h^N\big)_{\Omega_v} = 0,\\
    & \partial_x E_{j+1}^N = \sum_s q^s \int_{\Omega_v} f_{j+1}^{s,N}\,\der v,
  \end{aligned}\right.
\end{equation}
for all $h^N\in \widehat{V}^N_{j}$, with
$f_{j+1/2}^N:=(f_{j+1}^N+f_{j}^N)/2$.  
In the interval $T_j$, the approximated function $f_j^N$ 
can be represented in its spectral expansion in velocity space
$V^N_{j}$ as
\begin{equation}\label{eq:fNexp}
  f_j^N(x,v) = \sum_{n\in\Lambda_{N_v}} \widehat{C}_{n}^j(x)\,\psi^{\alpha_j,u_j}_n(v),
\end{equation}
where the linear functionals $\{\widehat{C}^j_{n}(x)[\cdot]: V^N_{j}\rightarrow\mathbb{C}\}_{n}$ 
are the expansion coefficients defined as
\begin{equation*}
  (f_j^N(x,\cdot)\in V^N_j) \longmapsto \left\{\widehat{C}_{n}^j(x)[f_j^N]:=
      \int_{\Omega_v}f_j^N(x,v)\,\psi^{\alpha_j,u_j}_n(v)\,\sqrt{\pi}\alpha^{-1}_je^{\xi_j^2}\,\der v,
    \right\}_{n\in\Lambda_{N_v}},
\end{equation*}
and, by construction, they form a basis for the dual spaces of $V^N_{j}$.

By virtue of the orthogonality relations \eqref{eq:HForth}, the
discrete problem \eqref{eq:VFhL2w} can be recast as a set of
$N_t(N_v+1)$ coupled equations in the unknown spectral coefficients. The resulting system reads:
  For each $j\in\Lambda_{N_t}$, given the initial
    spectral coefficients
    $\{\widehat{C}_{n}^{j}(x)\}_{n\in\Lambda_{N_v}}$, find
    $\{\widehat{C}_{n}^{j+1}(x)\}_{n\in\Lambda_{N_v}}$
    such that,
\begin{equation}\label{eq:ode}
\left\{ \begin{aligned}
    &\dfrac{\widehat{C}_{n}^{j+1}(x)-\widehat{C}_{n}^j(x)}{\Delta t_j}
    + \sqrt{\dfrac{n}{2}}\, \alpha_j\, \partial_x\widehat{C}_{n-1}^{j+1/2}(x)
    + \, u_j\, \partial_x\widehat{C}_{n}^{j+1/2}(x)
    + \sqrt{\dfrac{n+1}{2}}\,\alpha_j\,\partial_x\widehat{C}_{n+1}^{j+1/2}(x)\\[.2em]
    &\qquad\qquad\qquad\qquad -\sqrt{2n}\,\dfrac{q}{m}\,\dfrac{1}{\alpha_j}\widehat{C}_{n-1}^{j+1/2}(x)\,E^N_{j+1/2}(x)=0, 
    \qquad \forall\,n\in\Lambda_{N_v},\\[.5em]
    & \partial_x E_{j+1}^N(x)-\sum_s q^s\alpha_j^s\,\widehat{C}_{0}^{s,j+1}(x)=0.
  \end{aligned}\right.
\end{equation}
For the closure of the system of equations \eqref{eq:ode} 
we set $\widehat{C}^j_{n}(x)=0$, $j\in\Lambda_{N_t}$, $x\in\Omega_x$,
whenever $n\notin\Lambda_{N_v}$. 

\subsection{Artificial collisional operator}\label{sec:Coll}
The collisionless plasmas of interest here can develop smaller and
smaller scales in velocity space as time evolves, a phenomenon called
\emph{filamentation}.
The discretization of the velocity space is associated with a minimum
wavelength that can be resolved to the extent that any simulation will
inevitably run out of resolution.
Closing system \eqref{eq:ode} with $\widehat{C}^j_{n}=0$ for
$n\notin\Lambda_{N_v}$ determines the finest resolution in velocity
space.
The filamentation is also associated with the well-known recurrence
problem which is typical of higher-order Eulerian--Vlasov and spectral
methods \cite{CN76,JKM71,canosa74}.  
In order to mitigate the filamentation, it is customary to add a physical or
artificial collisional term to the right hand side of the Vlasov
equation.
Specifically, the Vlasov equation with artificial collisions reads:
Find $f^s\in C^1(T;L^2(\Omega))\cap C^0(T;V)$, such that
\begin{equation*}
  \begin{aligned}
    \partial_t f^s + v\partial_x f^s + \dfrac{q}{m} E^N\,\partial_v f^s =
    \nu\,\mathcal{C}(f^s),
    &\qquad\mbox{in}\;\Omega\times T,\;\forall s,
  \end{aligned}
\end{equation*}
where $\nu$ is a positive bounded constant, and $\mathcal{C}$ is the
operator modeling the artificial collisionality.
Filtering techniques \cite{parker_dellar_2015,Di2019,FilbetXiong} have also been used in combination with spectral approximation to suppress the numerical recurrence effect.

We follow \cite[Section 4]{CDBM16} and consider the collisional
operator defined, for any function $f^N_j\in V^N_{j}\times W$ and
for all $n\in\Lambda_{N_v}$, according to the
discrete formulation \eqref{eq:VFhL2w}, namely
\begin{equation}\label{eq:CollDC}
  \begin{aligned}
    \sqrt{\pi}(\alpha_j)^{-1}\big(\mathcal{C}(f^N_j),\psi^{\alpha_j,u_j}_ne^{\xi_j^2}\big)_{\Omega_v} =-
    \dfrac{n(n-1)(n-2)}{(N_v-1)(N_v-2)(N_v-3)}\,\widehat{C}_{n}^j(x).
  \end{aligned}
\end{equation}
This operator does not act directly on the first three modes of the
Hermite expansion and, therefore, maintains the conservation laws of
total mass, momentum and energy \cite{CDBM16}.
It corresponds to adding a reaction-advection-diffusion global
operator in velocity space:
indeed the term \eqref{eq:CollDC} can be obtained as a nonlinear
combination of continuous terms involving the Lenard--Bernstein
operator \cite{LB58},
\begin{equation}\label{eq:LenBern}
  \mathcal{C}(f) = \partial_v\left(vf+\dfrac12\partial_v f\right).
\end{equation}
The function \eqref{eq:LenBern} is an instance of the Fokker--Planck
form of the Landau collision operator
$\mathcal{C}(f):=\partial_v(\mathbb{A}f+\partial_v(\mathbb{D}f))$,
with the choices $\mathbb{A}=v-u$, $u=0$ and $\mathbb{D}=T\mathbb{I}$,
$T=1/2$.
With such choice the Lenard--Bernstein operator vanishes at the
Maxwell--Boltzmann equilibrium $f=\pi^{-1/2} e^{v^2}$.

Additionally, it is known that the AW discretization of the Vlasov--Poisson system considered here can become numerically unstable (i.e. the $L_2$ norm of the distribution function is not bounded and can grow in time) \cite{SH98}. This typically signals a lack of resolution such that the number of Hermite modes used in the simulation does not provide an adequate representation of the distribution function dynamics. The presence of a collisional operator adds numerical stability to the discretized equations, although in our experience the numerical instability is typically fixed by increased resolution. We remark that the techniques presented in this paper improve the numerical stability of the resulting algorithm since the dynamical adaptivity of the Hermite functions (by properly adjusting their shift and scaling argument) is intended the capture more accurately the dynamics of the distribution function. Indeed, the numerical experiments presented in Sec.~\ref{sec:NumExp} confirm the better numerical stability of the adaptive algorithm relative to an algorithm that does not use adaptivity.

Overall, we emphasize that, since the collisional term is a nondegenerate high
order diffusion, it must always be used in a convergence sense (this is, in fact, the real meaning of the collisionless approximation~\cite{stix}). 
In other words, the collisional coefficient $\nu$ has to be chosen in
such a way that the problem remains consistent with the dynamics of
collisionless plasmas of interest and, at the same time, introduces enough
artificial viscosity to prevent the recurrence effect associated with
the filamentation.

\section{Dynamical adaptivity of the Hermite functions}
\label{sec:Opt}
The discretization in velocity space is a crucial aspect for the
numerical treatment of kinetic models.
Spectral approximations have the potential of yielding highly accurate
solutions even in the presence of highly nonlinear dynamics.
However, a poor choice of the basis functions might require spectral
expansions with a large number of modes to achieve even moderate
accuracy.

In order to illustrate this point, let us consider the case of a shifted Maxwellian distribution function
\begin{equation}
    f=\exp\left(-\left(\frac{v-u_0}{\alpha_0}\right)^2\right),
    \label{f0}
\end{equation}
with $u_0\ne0$ varied parametrically and $\alpha_0=1$.
Let us try to represent $f$ with an AW Hermite expansion centered at $u=0$ and $\alpha=1$, Eq. (\ref{eq:fNexp}). We keep the total number of Hermite modes equal to 30. We represent the distribution function on 2000 points equally spaced point in velocity space between -2 and 5 and compute the relative error as the $L_2$ norm between the analytic and the expansion centered at $u=0$. Figure \ref{fig:expansion} (left) shows the relative error as a function of $u_0$. One can see that the error is practically zero and flat for $u_0<1$ but it starts to rise dramatically for $u_0>1$. For $u_0=2.4$ the error is $\sim 2\%$, while for $u_0=3$ the expansion solution is diverging with relative error $\sim 10^4$. Figure \ref{fig:expansion} (right) shows the analytic and the reconstructed distribution functions for the case $u_0=2.4$. One can notice that quite high oscillations begin to appear for $v<2$, particularly where the analytic distribution function \eqref{f0} is asymptotically going to zero. On the other hand, if we had centered the expansion with $u=u_0$, we would have captured the analytic function exactly with only one Hermite mode. This simple exercise illustrates nicely (1) how a significant departure in the shift of the argument of the Hermite functions inevitably leads to a strong lack of accuracy and, hence, (2) that centering the Hermite expansion appropriately is critical. In Section~\ref{sec:NumExp}, we will show with numerical experiments this might lead to numerical instabilities when using an algorithm that does not adapt in time the shift and scaling of the Hermite functions.  

\begin{figure}[H]
  \centering
  \includegraphics[width=0.8\textwidth]{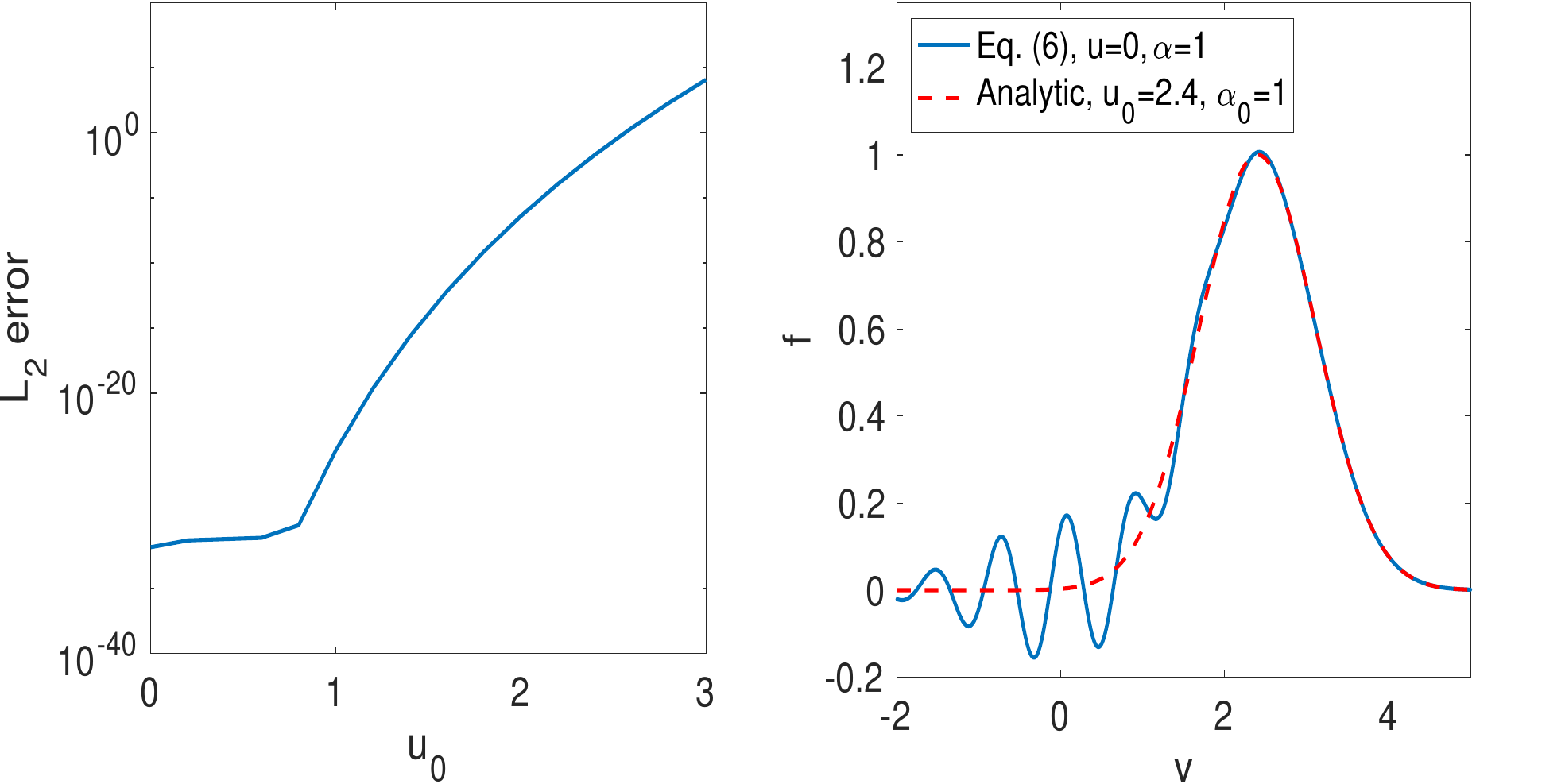}
  \caption{Relative error between the analytic distribution function given by Eq. \eqref{f0} ($u_0$ variable and $\alpha_0=1$) and the distribution function given by expansion \eqref{eq:fNexp} with $u=0$, $\alpha=1$ and $30$ Hermite modes as a function of $u_0$ (left); analytic and reconstructed distribution functions for $u_0=2.4$ (right).}
  \label{fig:expansion}
\end{figure}

We propose a numerical scheme where the discretization in velocity is
adapted over time via a dynamical selection of the parameters
$\alpha^s$ and $u^s$ entering the Hermite basis functions
\eqref{eq:psi}.
The gist of the proposed adaptive scheme is to update the Hermite
basis functions at the beginning of each temporal interval by finding a spectral
approximation able to ``follow'' the evolution of the numerical solution.
Specifically,
we allow the approximation
space spanned by the Hermite polynomials to change between temporal intervals, at fixed time.
If $V^N_{j-1}$ is the approximation space in the interval
$T_{j-1}=(t^{j-1},t^j]$, we can select, at time $t^{j}$, new Hermite
parameters so that in the following interval $T_{j}=(t^{j},t^{j+1}]$
the discrete Vlasov--Poisson system is solved in the updated space
$V^N_{j}$, which ideally has better approximation properties for a
fixed number of Hermite modes $N$. See Figure~\ref{fig:sketchadaptivity} for a sketch of the adaptive algorithm.

\begin{figure}[H]
\centering
\begin{tikzpicture}
\centering
\draw (3,-0.7) -- (7,-0.7);
\draw (1,-0.7) node {$\ldots$};
\draw (3,-0.9) -- (3,-0.5);
\draw (7,-0.9) -- (7,-0.5);
\draw (3,-0.5) node[above] {$t^{j-1}$};
\draw (3,-0.9) node[below] {\footnotesize $\alpha_{j-1}{=}1$};
\draw (3,-1.3) node[below] {\footnotesize $u_{j-1}{=}1$};
\draw (5,-0.9) node[below] {\footnotesize $V^N_{j-1}$};
\draw (7,-0.5) node[above] {$t^{j}$};
\draw (7.75,-0.9) node[below] {\footnotesize $\alpha_{j}{=}\sqrt{2}$};
\draw (7.75,-1.3) node[below] {\footnotesize $u_{j}{=}2$};

\draw[->] (7,-1) -- (7,-2.2);
\draw (7,-2.5) -- (11,-2.5);
\draw (7,-2.7) -- (7,-2.3);
\draw (11,-2.7) -- (11,-2.3);
\draw (9,-2.7) node[below] {\footnotesize $V^N_{j}$};
\draw (11,-2.3) node[above] {$t^{j+1}$};
\draw (13,-2.5) node {$\ldots$};

\draw[->] (4,1.5) -- (5.25,1.5);
\hspace{5em}
\begin{axis}[name=plot1,at={(10,70)}, anchor=west, height=3.5cm,width=4cm,axis lines = left, legend style={at={(0.7,1.38)},anchor=north, font=\footnotesize}]
   \addplot[domain=-2:6, samples=100, color=blue]{exp(-(x - 1)^2)};
   \addlegendentry{$\exp(-(v - 1)^2)$}
\end{axis}
\begin{axis}[name=plot2,at={($(plot1.east)+(2cm,0)$)},anchor=west,height=3.5cm,width=4cm,axis lines = left,  legend style={at={(0.8,1.38)},anchor=north, font=\footnotesize} ]
    \addplot[domain=-2:6, samples=100, color=blue]{exp(-(x - 2)^2/2)};
    \addlegendentry{$\exp(-\frac{(v - 2)^2}{2})$}
  \end{axis}
\end{tikzpicture}
\caption{Sketch of the adaptive algorithm. Assume that, at time $t^{j-1}$, the distribution function is given by $f(v)=\exp(-(v-1)^2)$. The optimal Hermite parameters are then given by $(\alpha_{j-1},u_{j-1})=(1,1)$.  At time $t^j$ new Hermite parameters $(\alpha_j,u_j)$ are computed to better approximate the current solution, here $f(v)=\exp(-(v-2)^2/2)$.}
\label{fig:sketchadaptivity}
\end{figure}
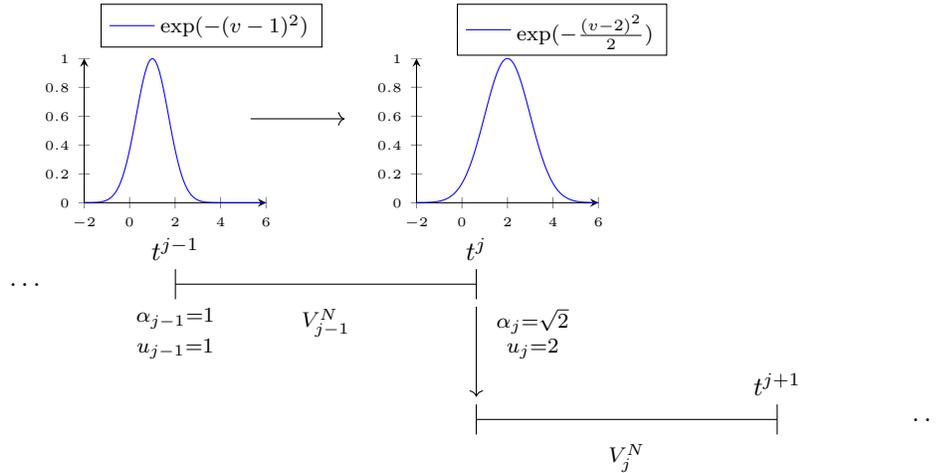

Moreover, in updating the approximation space, we need to map the numerical
solution at time $t^{j}$ into the updated space $V^N_{j}$ in such a
way that it can be a valid initial condition for the Vlasov--Poisson
discrete problem posed in the interval $T_{j}$.
This is a crucial aspect for the mathematical validity of the approximation.
We observe that in the NR$xx$ algorithm \cite{NRxx} this point is not discussed, while it is included in its recent extension~\cite{NRxx2}.
\begin{remark}
A critical point in this adaptive approach is that we adapt the approximation space \emph{at fixed
  time}.
In other words, the discrete system in each time interval is always evolved with fixed
parameters $u$ and $\alpha$, while these parameters can only change
between time steps, i.e. at the interface between temporal intervals.
Attempting to update the parameters during a time step might result in
order-unity errors associated with mixing the two approximation spaces
$V^N_{j-1}$ and $V^N_{j}$ at each fixed time $t^j$. Indeed, a spectral discretization relies on linear operations between the coefficients of the expansion of the distribution function with respect to the spectral basis, as shown in \eqref{eq:ode}. Those operations are valid only if the spectral coefficients refer to the same basis. The reason is simply that a function expanded in two different bases produces different expansion coefficients even if the bases span the same space. Analogously, when changing the basis, the expansion coefficients need to be suitably re-computed.
\end{remark}
Note that, in this paper, we consider only the time variation of
$u$ and $\alpha$ and leave to a future work the case where these
parameters change both in space and time.

Two main ingredients are needed in the adaptive approach highlighted
above: first, a criterion to update the Hermite parameters
$(\alpha,u)$, and second, a suitable definition of the operator
$\mathcal{P}^N_{j}$ mapping $V^N_{j-1}$ into $V^N_{j}$.
Let us start from the latter task.

\subsection{Operator from \texorpdfstring{$V^N_{j-1}$}{} into \texorpdfstring{$V^N_{j}$}{}}

Assume that at a given time $t^j$ we have determined the ``optimal''
Hermite parameters $(\alpha_j,u_j)$ such that the adapted
finite-dimensional velocity space is defined as
$V^N_{j}:=\SPAN{\{\psi^{\alpha_j,u_j}_n(v)\}_{n\in\Lambda_{N_v}}}$.
To evolve the discrete Vlasov--Poisson problem in the new space, we
need to map the numerical solution $f^N_j$ at time $t^j$ from $V_{j-1}^N$ to
$V^N_{j}$.
There are in principle many ways to define the operator mapping
$V^N_{j-1}$ into $V^N_{j}$.  We chose to map $f^N_j$ into its
orthogonal projection onto $V^N_{j}$ with respect to the $L^2$-inner
product weighted by $\omega_j$.
\begin{definition}\label{def:Pj}
  Let $V^N_{j-1}$ and $V^N_{j}$ be the finite-dimensional spaces
  introduced in \eqref{eq:VN}.
  We define $\mathcal{P}^N_{j} :V^N_{j-1}\subset L^2(\mathbb{R})
  \rightarrow V^N_j\subset L^2(\mathbb{R})$ as the operator that maps
  any $f^N\in V^N_{j-1}$ into the function $\mathcal{P}^N_{j}f^N$
  satisfying
  \begin{equation}\label{eq:L2omegaOrt}
    (\sqrt{\omega_j}(\mathcal{P}^N_{j}f^N-f^N),\sqrt{\omega_j}h^N)_{\Omega_v} = 0\,,\qquad \forall\, h^N\in V^N_{j},
  \end{equation}
  where $\omega_j=\sqrt{\pi}\alpha^{-1}_je^{\xi_j^2}$ and $\xi_j=(v-u_j)/\alpha_j$.
\end{definition}
The inner product in \eqref{eq:L2omegaOrt} is well-defined since
$\sqrt{\omega_j}\mathcal{P}^N_{j}f^N\in L^2(\mathbb{R})$, and
$\sqrt{\omega_j}h^N\in L^2(\mathbb{R})$.
Moreover, since $f^N\in L^2(\mathbb{R})$ and
$\{\psi_n^{\alpha_j,u_j}\}_{n\in\mathbb{N}}$ is a basis of
$L^2(\mathbb{R})$, $f^N$ admits a spectral expansion in
$\{\psi_n^{\alpha_j,u_j}\}_{n\in\mathbb{N}}$, from which we can
immediately see that also $\sqrt{\omega_j}f^N\in L^2(\mathbb{R})$.

In order to relate the spectral coefficients
$\{\widehat{C}_n(t^j,x)\}_{n\in\Lambda_{N_v}}$ of $f^N$ to the
spectral coefficients $\{\widehat{P}_n(t^j,x)\}_{n\in\Lambda_{N_v}}$
of $\mathcal{P}^N_{j}f^N$ at time $t^j$, we derive the matrix representation of
the linear mapping $\mathcal{P}^N_{j}$ as follows.
Owing to Definition~\ref{def:Pj},
it holds
\begin{equation}\label{eq:coeffPj}
  \widehat{P}_n(t^j,x)=\sum_{m\in\Lambda_{N_v}}\widehat{C}_m(t^j,x)(\sqrt{\omega_j}\psi_m^{\alpha_{j-1},u_{j-1}},\sqrt{\omega_j}\psi_n^{\alpha_{j},u_{j}})_{\Omega_v}
  \qquad\forall\,n\in\Lambda_{N_v},\,\forall\, x\in\Omega_x.
\end{equation}
Hence, the transformation matrix
$\mathbb{P}^{\,j}\in\mathbb{R}^{(N_v+1)\times (N_v+1)}$ is defined,
for all $n,m\in\Lambda_{N_v}$, as
\begin{equation}\label{eq:transfM}
  \mathbb{P}^{\,j}_{n,m}=(\psi_m^{\alpha_{j-1},u_{j-1}},\psi_n^{\alpha_{j},u_{j}})_{\Omega_v,\,\omega_j}
  := \int_{\Omega_v}\psi_m^{\alpha_{j-1},u_{j-1}}(v)\psi_n^{\alpha_j,u_j}(v)\,\omega_j\, \der v.
\end{equation}
In order to give an explicit expression of the entries of
$\mathbb{P}^{\,j}$ we need to distinguish three different cases,
associated with the change of only one parameter or both.
Let us introduce the coefficients
\begin{equation*}
  a:=\dfrac{\alpha_j}{\alpha_{j-1}}\in\mathbb{R}\setminus\{0\},\qquad b:=\dfrac{u_j-u_{j-1}}{\alpha_{j-1}}\in\mathbb{R},
\end{equation*}
and let $K_{n,m}:=2^{\frac{m-n}{2}}(m!)^{-\frac12}(n!)^{\frac12}\in\mathbb{R}$
for all $n,m\in\Lambda_{N_v}$.
Hence,
\begin{itemize}
\item[(i)] If $\alpha_j\neq\alpha_{j-1}$ and $u_j\neq u_{j-1}$\,:
  \begin{equation*}
    \mathbb{P}^{\,j}_{n,m} = K_{n,m}\dfrac{1}{a^{m+1}}
    \sum_{\substack{k=m \\ k-m\;even}}^n\dfrac{1}{(n-k)!\left(\frac{k-m}{2}\right)!}\left(-\dfrac{2b}{a}\right)^{n-k}\left(\dfrac{1}{a^2}-1\right)^{\frac{k-m}{2}}.
  \end{equation*}
\item[(ii)]
  If $\alpha_j=\alpha_{j-1}$ and $u_j\neq u_{j-1}$\,:
  \begin{equation*}
    \mathbb{P}^{\,j}_{n,m} = K_{n,m}\dfrac{1}{(n-m)!}(-2b)^{n-m}.
  \end{equation*}
\item[(iii)]
  If $\alpha_j\neq\alpha_{j-1}$ and $u_j= u_{j-1}$\,:
  \begin{equation*}
    \mathbb{P}^{\,j}_{n,m} = K_{n,m}\dfrac{1}{a^{m+1}}
    \dfrac{1}{\left(\frac{n-m}{2}\right)!}
    \left(\dfrac{1}{a^2}-1\right)^{\frac{n-m}{2}},\qquad\mbox{for $n-m$ even}.
  \end{equation*}
\end{itemize}
Note that, in all the above cases, if $n<m$ then
$\mathbb{P}^{\,j}_{n,m}=0$, namely the transformation matrix
$\mathbb{P}^{\,j}$ is lower triangular.
By assuming $\alpha>0$, the transformation matrix is invertible since
it is lower triangular and all the diagonal elements are
positive. Indeed $\mathbb{P}^{\,j}_{n,n} =a^{-(n+1)}$ for all
$n\in\Lambda_{N_v}$.

  The transformation matrix $\mathbb{P}^{\,j}$ has been derived from
  the analytic calculation of the integrals \eqref{eq:transfM} using
  the orthogonality properties of the Hermite polynomials together
  with the formula for the $n$-th Hermite polynomials
  \cite[p. 255]{MOS66},
    \begin{equation*}
    \begin{aligned}
      H_n(Ay+B) & = \sum_{k=0}^n \binom{n}{k}(2B)^{n-k} H_k(Ay) \\
      & = \sum_{k=0}^n \binom{n}{k}(2B)^{n-k}\sum_{j=0}^{\lfloor k/2 \rfloor} A^{k-2j}(A^2-1)^j\binom{k}{2j}\dfrac{(2j)!}{j!} H_{k-2j}(y),
    \end{aligned}
  \end{equation*}
  valid for all $A\in\mathbb{R}\setminus\{0,1\}$,
  $B\in\mathbb{R}\setminus\{0\}$.

\subsection{Physics-based choice of the Hermite parameters \texorpdfstring{$\alpha$}{} and \texorpdfstring{$u$}{}}\label{sec:PhysB}

The adaptivity criterion for the choice of the Hermite parameters
should ideally minimize, at any given time $t^j$, the error between
the numerical solution $f^N_{j}\in V^N_{j-1}$ and the exact solution
$f^{\ex}(t^{j})$.
Since the exact solution or an error indicator are in general not
available, the adaptivity criterion for the selection of $(\alpha,u)$
remains one of the main challenges of an adaptive algorithm. We suggest here a criterion that is based on physics considerations.
 
At the initial time, the Hermite parameters $(\alpha_0,u_0)$ are
chosen to minimize the projection error of the initial condition
$f(t^0,x,v)$ onto the finite dimensional space associated with
$(\alpha_0,u_0)$ for a fixed number of modes $N$. (Note that in the examples treated in section 5 we start from a Maxwellian plasma that can be described exactly with only one Hermite mode, hence the projection error is exactly zero.)
For $t^j>0$, one might envision a choice of the Hermite parameters
driven by the minimization of the projection error of the approximate
solution.  However, this criterion does not ensure that
$\norm{f^{\ex}(t^{j})-f^N_{j}}$ is minimized, and, moreover, it
``tends'' to select as new Hermite parameters the $(\alpha,u)$ that are closest
(in terms of Euclidean distance) to the old parameters.

By contrary, a physics-based choice aims at ``correcting'' the Hermite
parameters according to the evolving dynamics.
The idea of the physics-based adaptivity criterion that we propose is
to select, at each time step, the shift parameter $u$ as the average
velocity of species s, and the scaling parameter $\alpha$ as the average thermal
velocity.
In particular, we set both quantities as the ratio of spatial averages
of suitable physical quantities, see \eqref{eq:physAU}.
The rationale for using the ratio of spatial averages is that we aim
at capturing the amplitude and position of the Maxwellian reproducing
the average behavior of the solution, rather than the oscillations
associated with the deviation from a Maxwellian behavior. Nevertheless, in the future we plan to extend the algorithm presented here to include spatially dependent quantities, i.e. $u(t,x)$ and $\alpha(t,x)$.

Let $f^N_j\in V^N_{j-1}\times W$ be the numerical solution of the
semi-discrete Vlasov--Poisson problem \eqref{eq:VFhL2w} at time $t^{j}$.
We update the Hermite parameters as
\begin{equation}\label{eq:physAU}
  \begin{aligned}
    & u_{j}=\dfrac{1}{n_j}\int_{\Omega}vf_j^N(x,v) \,\der v \der x,\qquad\mbox{with}\quad n_j:=\int_{\Omega}f_j^N(x,v)\, \der v\der x\\[.5em]
    & \alpha_{j} = \sqrt{2}\sqrt{\dfrac{1}{n_j}\int_{\Omega}(v-u_{j})^2f^N_j(x,v)\, \der v \der x}.
  \end{aligned}
\end{equation}
Using the spectral expansion \eqref{eq:fNexp} of $f^N_j$ in
$V^N_{j-1}$ given by
\begin{equation*}
  f^N_j(x,v) = \sum_{n\in\Lambda_{N_v}} \widehat{C}^{j-1}_{n}(x)\psi^{\alpha_{j-1},u_{j-1}}_n(v),
\end{equation*}
and the recurrence relation of the Hermite functions
\begin{equation}\label{eq:Hrec}
  v\,\psi_n^{\alpha,u}(v)=\alpha\sqrt{\dfrac{n+1}{2}}\,\psi_{n+1}^{\alpha,u}(v)+
  \alpha\sqrt{\dfrac{n}{2}}\,\psi_{n-1}^{\alpha,u}(v)+u\,\psi_n^{\alpha,u}(v),\qquad\forall\, n\in\mathbb{N},
\end{equation}
the Hermite parameters in \eqref{eq:physAU} can be efficiently updated as
\begin{equation*}
  u_{j} = u_{j-1} + \dfrac{\alpha_{j-1}}{\sqrt{2}}\,\dfrac{\int_{\Omega_x}\widehat{C}^{j-1}_{1}(x)\,\der x}{\int_{\Omega_x}\widehat{C}^{j-1}_{0}(x)\,\der x},\qquad
  \alpha_{j} = \alpha_{j-1}\sqrt{1+\sqrt{2}\,\dfrac{\int_{\Omega_x}\widehat{C}^{j-1}_{2}(x)\,\der x}{\int_{\Omega_x}\widehat{C}^{j-1}_{0}(x)\,\der x}-
    \left(\dfrac{\int_{\Omega_x}\widehat{C}^{j-1}_{1}(x)\,\der x}{\int_{\Omega_x}\widehat{C}^{j-1}_{0}(x)\,\der x}\right)^2}.
\end{equation*}

Since the physics-based definition of the Hermite parameters depends
on the first two moments of the distribution function and on the
kinetic energy, the fact that the AW scheme satisfies exactly the
conservation laws of total mass, momentum and energy (which we
anticipate here, for details see Section~\ref{sec:conservation:properties}) is of fundamental
importance.
Moreover, since the spectral numerical discretization
\eqref{eq:VFhL2w} is not generally positivity preserving (which in
fact signals lack of resolution), the Hermite parameter $\alpha$ might
become complex.
In the numerical experiments of Section \ref{sec:NumExp}, the updated
$\alpha_j$ is taken as the real part of \eqref{eq:physAU}.

Note that the Hermite parameters are not directly dependent on the value of the
collisional term since the latter is constructed not to act on the
first three modes of the Hermite expansion.  However, since the
evolution of the expansion coefficients depends on the collisional
term, different choices of $\nu$ might yield a different evolution of
the Hermite parameters.

\subsection{Implementation and computational complexity of the adaptive algorithm}
\label{sec:implementation}
The adaptive pseudo-algorithm reads as follows.
\begin{algorithm}[H]
  \caption{Input: $f_0^N$, $\alpha_0$, $u_0$, $N_t$}\label{alg:adaptive}
  \begin{algorithmic}
  \State Initialize $E^N_0\in W$ via Eq. \eqref{eq:ode}
    \For{$1\leq j\leq N_t$}
    \State Solve problem \eqref{eq:VFhL2w} in the interval $T_{j-1}$, with velocity approximation space $V^N_{j-1}$, to obtain\\ \quad\, $f_{j}^N\in V^N_{j-1}\times W$ and $E_{j}^N\in W$
    \State Compute the updated Hermite parameters $\alpha_{j}$ and $u_{j}$ from $f^N_j$ using \eqref{eq:physAU}
    \State Compute $\mathcal{P}^N_{j}f^N_{j}\in V^N_{j}\times W$ using \eqref{eq:L2omegaOrt}
    \State Set $\mathcal{P}^N_{j}f^N_{j}$ as initial condition for the distribution function in the interval $T_j$
    \EndFor
  \end{algorithmic}
\end{algorithm}
In the non-adaptive strategy, system \eqref{eq:ode} is solved on a velocity approximation space $V^N$ that does not change in time. Compared to the non-adaptive approach,
the adaptive Algorithm~\ref{alg:adaptive} incurs an
increased computational cost associated with updating the Hermite
parameters, building the lower triangular matrix $\mathbb{P}$, and
computing the spectral coefficients of the discrete distribution function projected into the updated approximation space.
The construction of the transfer matrix $\mathbb{P}$ requires
one operation for each of the $(N_v+1)N_v/2$ matrix entry; this is possible
even in the case when both Hermite parameters are changing,
by computing the elements of $\mathbb{P}$ in a suitable order.
The computational complexity of the construction of $\mathbb{P}$ is, therefore,
$O(N_c^2)$, where $N_c$ is defined as the number $N_v$ of Hermite modes 
times the number of plasma species.
Updating the expansion coefficients $P_{n}(x)=\sum_{m\leq n}
\mathbb{P}_{n,m} C_{m}(x)$ for all
$n\in\Lambda_{N_v}$ requires $O(N_c^2)$
operations every time the basis is updated.
Despite this extra computational costs,
the bulk of the computational effort during the simulation results presented here -- for both non-adaptive and adaptive algorithms -- is taken by the nonlinear solver and the cost of the adaptive step is negligible. This is confirmed in Fig. \ref{fig:TSIcpu}, which shows the computational time versus number of degrees of freedom for the numerical tests presented in Section 5.2.2. As discussed in Sec. 5, the numerical algorithm uses a Jacobian-Free Newton Krylov solver with GMRES for the
inner (linear) iterations, preconditioned via incomplete LU. One can also notice the almost optimal (linear) scaling of the algorithm versus number of degrees of freedom.

\begin{figure}[ht!]
\centering
    \includegraphics[width=0.6\textwidth]{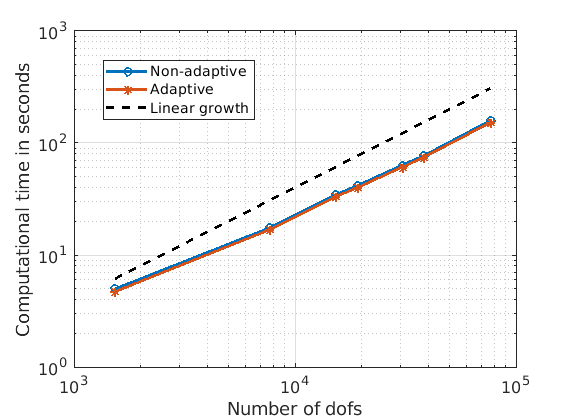}
\caption{Numerical test presented in Sec. 5.2.2: plot of the computational time (in seconds) of the final simulation time versus number of degrees of freedom (dofs) $N_s(N_x+1)N_v$ where $N_s=3$ is the number of species, $N_x=50$ and $N_v\in\{10,50,100,125,200,250,500\}$. All other parameters are specified in Section 5.2.2.}
\label{fig:TSIcpu}
\end{figure}

We emphasize that the optimization and efficiency of our numerical
MATLAB code is not the key point of this work, and we acknowledge
that more efficient implementations are conceivable.

\section{Conservation properties}\label{sec:conservation:properties}
One of the most important results of this paper is that the adaptive fully discrete scheme guarantees exact conservation of
mass, momentum and energy.
In order to prove this statement we proceed in two steps: first we show that, within each time interval, the conservation of mass, momentum and energy are satisfied;
second we prove that the operator
$\mathcal{P}^N_{j}$ leaves the conserved quantities invariant. The first step of the proof boils down to show that the non-adaptive spectral method preserves the first three moments of the system and it is a well-known result in the spectral approximation of the Vlasov--Poisson and Vlasov--Maxwell equations using AW Hermite functions, see \cite[Section 3]{GLD15}. Here we briefly summarize the main properties and derivations that will be used in the second step.

Let us define a \emph{conserved quantity} $\mathcal{M}$ as the function
$\mathcal{M}\circ g:t\in T \mapsto
\mathcal{M}(g(t))\in\mathbb{R}$, such that $\mathcal{M}\circ g\in C^1(T)$ and satisfies
\begin{equation*}
  \dfrac{d\mathcal{M}(g(t))}{dt}=0,
\end{equation*}
whenever $g(t)=f(t,\cdot,\cdot)\in L^2(\Omega)$ is the (weak)
solution of the Vlasov--Poisson problem \eqref{eq:VlasovPoisson}.
As a first step, we need to show that the advancement in time
preserves the conserved quantities of the continuous model: if
$f^N_{j+1}\in V^N_j\times W$ is the numerical solution of the
semi-discrete problem \eqref{eq:VFhL2w} at time $t^{j+1}$, then the
quantity $\mathcal{M}(f^N_{j+1})$ is still conserved over time,
i.e. $\mathcal{M}(f^N_{j+1})=\mathcal{M}(f^N_{j})$.
In greater detail, let us consider the following conservation
properties.
\begin{enumerate}
\item \textbf{Mass}. Let the mass of the density function $f^s$ for
  the species $s$, be defined as
  \begin{align*}
    M(f^s):=m^s\int_{\Omega} f^s(t,x,v)\,\der x \der v.
  \end{align*}
  Note that the mass is proportional to the zero-th moment of the
  distribution function $f^s$.
  The mass of the approximate function $f^{s,N}_{j+1}\in V^N_{j}$ is
  \begin{align}\label{eq:discrMass}
    M(f^{s,N}_{j+1}) 
    = m^s\sum_{n}\int_{\Omega}\widehat{C}^{s,{j+1}}_{n}(x)\psi_n^{\alpha^s_j,u^s_j}(v)\,\der v\der x
    = m^s\alpha^s_j\,\int_{\Omega_x}\widehat{C}^{s,{j+1}}_{0}(x)\,\der x.
  \end{align}
Integrating \eqref{eq:ode}, for $n=0$, with respect to $x\in\Omega_x$ and using periodic boundary conditions,
it holds
  \begin{equation*}
    \begin{aligned}
      M(f^{s,N}_{j+1})= m^s\alpha^s_j\int_{\Omega_x}\widehat{C}^{s,j+1}_{0}(x)\,\der x
      = m^s\alpha^s_j\int_{\Omega_x}\widehat{C}^{s,j}_{0}(x)\,\der x=M(f^{s,N}_j).
    \end{aligned}
  \end{equation*}

\item \textbf{Momentum}. The total momentum of $f$ is defined as
  \begin{align*}
    P(f):=\sum_s m^s\int_{\Omega} v f^s(t,x,v)\,\der x\der v,
  \end{align*}
  and it is therefore proportional to the first moment of the
  distribution function.
  The momentum of the approximate solution $f^{s,N}_{j+1}\in
  V^N_{j}\times W$ is
  \begin{equation}\label{eq:discrMom}
    \begin{aligned}
      P(f^{s,N}_{j+1}) 
      & 
      = m^s\sum_{n}\int_{\Omega}\widehat{C}^{s,j+1}_{n}(x)\,v\,\psi_n^{\alpha^s_j,u^s_j}(v)\,\der v\der x\\
      &	= m^s\alpha^s_j\left(u^s_j\,\int_{\Omega_x}\widehat{C}^{s,j+1}_{0}(x)\,\der x +\dfrac{\alpha_j^s}{\sqrt{2}}\,\int_{\Omega_x}\widehat{C}^{s,j+1}_{1}(x)\,\der x\right)\\
      & = u_j^s\,M(f^{s,N}_{j+1}) + m^s\dfrac{(\alpha_j^s)^2}{\sqrt{2}}\,\int_{\Omega_x}\widehat{C}^{s,j+1}_{1}(x)\,\der x.
    \end{aligned}
\end{equation}
Integrating the Vlasov equation \eqref{eq:ode} in $\Omega_x$ and using periodic boundary conditions results in
\begin{equation*}
  \begin{aligned}
    P(f^{N}_{j+1}) 
    &= \sum_{s} u_j^s M(f^{s,N}_{j+1}) + \sum_{s}m^s\dfrac{(\alpha_j^s)^2}{\sqrt{2}}\int_{\Omega_x}
    \widehat{C}^{s,j}_{1}(x)\,\der x\\
    &\quad + \Delta t_j\sum_{s}\alpha_j^s\,q^s
    \int_{\Omega_x}\widehat{C}^{s,j+1/2}_{0}(x)\,E^{N}_{j+1/2}(x)\,\der x.
  \end{aligned}
\end{equation*}
Substituting the semi-discrete Poisson equation \eqref{eq:ode} yields
\begin{equation*}
  \Delta t_j \sum_{s}\alpha_j^s q^s
  \int_{\Omega_x}\widehat{C}^{s,j+1/2}_{0}(x)\,E^{N}_{j+1/2}(x)\,\der x
  = \Delta t_j\int_{\Omega_x}\partial_x E^N_{j+1/2}\,E^{N}_{j+1/2}\,\der x=0,
\end{equation*}
on account of the periodic boundary conditions.
Hence, $P(f^{N}_{j+1})=P(f^{N}_{j})$.
\item \textbf{Total Energy}.
  Let $\mathcal{E}_k$ be the kinetic energy, defined as
  \begin{align*}
    \mathcal{E}_k(f^s):=\dfrac{m^s}{2}\int_{\Omega} v^2 f^s(t,x,v)\,\der x\der v,
    \qquad \forall\,s\in\{e,i\}.
  \end{align*}
  The kinetic energy associated with the approximate solution
  $f^{s,N}_{j+1}\in V^N_{j}\times W$ at time $t^{j+1}$ is
  \begin{equation}\label{eq:discrKinE}
    \begin{aligned}
    \mathcal{E}_k(f^{s,N}_{j+1})  := &\,
      \dfrac{m^s}{2}\sum_{n}\int_{\Omega}\widehat{C}^{s,j+1}_{n}(x) v^2\,\psi_n^{\alpha_j^s,u_j^s}(v)\,\der v\der x\\
       = &\, \dfrac{m^s\alpha_j^s}{2}\left((u_j^s)^2+\dfrac{(\alpha_j^s)^2}{2}\right)\,\int_{\Omega_x}\widehat{C}^{s,j+1}_{0}(x)\,\der x\\
        & + \dfrac{m^s\alpha_j^s}{2}\left[\sqrt{2} u^s_j\alpha_j^s\,\int_{\Omega_x}\widehat{C}^{s,j+1}_{1}(x)\,\der x+\dfrac{(\alpha_j^s)^2}{\sqrt{2}}\,\int_{\Omega_x}\widehat{C}^{s,j+1}_{2}(x)\,\der x\right],
    \end{aligned}
  \end{equation}
  where the recurrence relation \eqref{eq:Hrec} has been used twice.
  Using the Vlasov--Poisson equations
  \eqref{eq:ode}, results in
  \begin{equation*}
    \mathcal{E}_k(f^{s,N}_{j+1})
    = \mathcal{E}_k(f^{s,N}_{j})+\Delta t_j q^s \alpha_j^s\int_{\Omega_x}\left(\dfrac{\alpha_j^s}{\sqrt{2}}\widehat{C}^{s,j+1/2}_{1}(x)
      + u^s_j\widehat{C}^{s,j+1/2}_{0}(x)\right)\,E^{N}_{j+1/2}(x)\,\der x.
  \end{equation*}
  The potential energy associated with the discrete electric field is
  \begin{equation}\label{eq:discrPotE}
  \mathcal{E}_p(E^N_{j+1})
      = \dfrac{\epsilon_0}{2} \int_{\Omega_x}(E^N_{j+1}(x))^2\,\der x.
  \end{equation}
  To derive a discrete evolution of the potential energy let us first
  consider the current density
  \begin{equation*}
    J^s(t,x) = q^s \int_{\Omega_v} v f^s(t,x,v)\,\der v.
  \end{equation*}
  The current density associated with the numerical solution
  $f^{s,N}_{j+1}\in V^N_{j}\times W$ is
  \begin{equation*}
    J(f^{s,N}_{j+1})(x) = q^s\alpha_j^s
    \left(u_j^s\widehat{C}^{s,j+1}_0(x)+\dfrac{\alpha_j^s}{\sqrt{2}}\widehat{C}^{s,j+1}_1(x)\right).
  \end{equation*}  
  The variation of the potential energy reads
  \begin{equation}\label{eq:Enpot}
    \begin{aligned}	
      \mathcal{E}_p(E^N_{j+1})-\mathcal{E}_p(E^N_{j})
      & = \dfrac{\epsilon_0}{2} \int_{\Omega_x}(E^N_{j+1}(x))^2\,\der x-
      \dfrac{\epsilon_0}{2} \int_{\Omega_x}(E^N_{j}(x))^2\,\der x\\
      & = \epsilon_0 \int_{\Omega_x}(E^N_{j+1}(x)-E^N_{j}(x))E^N_{j+1/2}(x)\,\der x.
    \end{aligned}
  \end{equation}
  Using the midpoint rule for the temporal discretization of
  Amp\`ere's equation
  \begin{equation*}
    \epsilon_0\,\partial_t E +\sum_s J^s(t,x)=0,
  \end{equation*}
  and substituting in \eqref{eq:Enpot} we obtain
  \begin{equation*}
    \begin{aligned}	
      \mathcal{E}_p(E^N_{j+1})-\mathcal{E}_p(E^N_{j})
      & = \epsilon_0 \int_{\Omega_x}(E^N_{j+1}(x)-E^N_{j}(x))E^N_{j+1/2}(x)\,\der x\\
      & = -\Delta t_j \sum_s q^s\alpha_j^s
      \int_{\Omega_x}\left(u_j^s\widehat{C}^{s,j+1/2}_0(x)+
        \dfrac{\alpha_j^s}{\sqrt{2}}\widehat{C}^{s,j+1/2}_1(x)\right) E^N_{j+1/2}(x)\,\der x.
    \end{aligned}
  \end{equation*}
  Hence, it can be inferred that the total energy is conserved in the
  discrete problem, namely
  \begin{equation*}
    \begin{aligned}	
      \sum_s \big(\mathcal{E}_k(f^{s,N}_{j+1}) - \mathcal{E}_k(f^{s,N}_{j})\big)
      + \mathcal{E}_p(E^N_{j+1})-\mathcal{E}_p(E^N_{j})=0.
    \end{aligned}
  \end{equation*}
\end{enumerate}
As a second step, we show that the operator from
Definition~\ref{def:Pj} preserves the invariants of the system.
\begin{proposition}\label{prop:CPAdapt}
  Let $\mathcal{P}^N_{j}$ be the operator from Definition~\ref{def:Pj}
  and let $f^N\in V^N_{j-1}\times W$. 
  It holds
  \begin{equation*}
    \mathcal{M}(\mathcal{P}^N_{j}f^N)=\mathcal{M}(f^N),
  \end{equation*}
  where the functional $\mathcal{M}$ is either mass, momentum, kinetic
  and potential energy as defined in \eqref{eq:discrMass}, \eqref{eq:discrMom},
  \eqref{eq:discrKinE}, and \eqref{eq:discrPotE}, respectively.
\end{proposition}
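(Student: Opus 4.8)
The plan is to reduce all four statements to a single fact: the operator $\mathcal{P}^N_{j}$ leaves the velocity moments $\int_{\Omega_v}v^k f^N(x,\cdot)\,\der v$ of order $k=0,1,2$ unchanged \emph{pointwise in} $x\in\Omega_x$, and then to integrate over $\Omega_x$ (respectively, to invoke the discrete Poisson equation) to obtain the invariance of mass, momentum, kinetic and potential energy.

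First I would express the $k$-th velocity moment through the $\omega_j$-weighted inner product underlying Definition~\ref{def:Pj}. Since $\psi_0^{\alpha_j,u_j}(v)=\pi^{-1/2}e^{-\xi_j^2}$ and $\omega_j=\sqrt\pi\,\alpha_j^{-1}e^{\xi_j^2}$, one has $\omega_j^{-1}=\alpha_j\,\psi_0^{\alpha_j,u_j}$, hence for any admissible $f$ and any $x\in\Omega_x$,
\[
  \int_{\Omega_v}v^k f(x,v)\,\der v=\big(f(x,\cdot),\,v^k\omega_j^{-1}\big)_{\Omega_v,\omega_j}=\alpha_j\,\big(f(x,\cdot),\,v^k\psi_0^{\alpha_j,u_j}\big)_{\Omega_v,\omega_j}.
\]
The decisive observation is that $v^k\psi_0^{\alpha_j,u_j}\in V^N_{j}$ for $k=0,1,2$ whenever $N_v\ge 2$: applying the three-term recurrence \eqref{eq:Hrec} at most twice, starting from $\psi_0^{\alpha_j,u_j}$, writes $v^k\psi_0^{\alpha_j,u_j}$ as a linear combination of $\psi_0^{\alpha_j,u_j},\dots,\psi_k^{\alpha_j,u_j}$. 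Therefore $h^N:=v^k\omega_j^{-1}$ is an admissible test function in \eqref{eq:L2omegaOrt}, and the defining orthogonality of $\mathcal{P}^N_{j}$ gives $\big(\mathcal{P}^N_{j}f^N-f^N,\,v^k\omega_j^{-1}\big)_{\Omega_v,\omega_j}=0$, that is,
\[
  \int_{\Omega_v}v^k(\mathcal{P}^N_{j}f^N)(x,v)\,\der v=\int_{\Omega_v}v^k f^N(x,v)\,\der v\qquad\text{for all }x\in\Omega_x.
\]

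The four cases then follow. Integrating the last identity over $\Omega_x$: for $k=0$ (per species, with the $m^s$ weight) this yields invariance of the mass \eqref{eq:discrMass}; for $k=1$, summed over species with weight $m^s$, invariance of the momentum \eqref{eq:discrMom}; for $k=2$, invariance of the kinetic energy \eqref{eq:discrKinE}. For the potential energy, recall that $\mathcal{P}^N_{j}$ is understood to act on the pair (distribution function, field) by leaving $E^N$ untouched, as in Algorithm~\ref{alg:adaptive}; then $\mathcal{E}_p$ in \eqref{eq:discrPotE} is preserved trivially, and the real content is that the pair $(\mathcal{P}^N_{j}f^N,E^N)$ still satisfies the discrete Poisson equation in \eqref{eq:ode} — which it does, since that equation only involves the zeroth velocity moment $\sum_s q^s\int_{\Omega_v}f^{s,N}\,\der v$, left invariant by the $k=0$ identity above.

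The only genuine computation is the verification that $v^k\psi_0^{\alpha_j,u_j}\in V^N_{j}$ for $k\le2$, which I expect to be the main (though mild) obstacle: it is exactly here that the number $N_v$ of retained Hermite modes enters, and it explains why the argument reaches the first three moments but no further. Everything else is a direct consequence of the variational characterization \eqref{eq:L2omegaOrt}; in particular, the explicit lower-triangular form of the transfer matrix $\mathbb{P}^{\,j}$ is not needed.
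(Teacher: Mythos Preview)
Your argument is correct and takes a genuinely different route from the paper. The paper proves the proposition by explicit computation with the transfer matrix $\mathbb{P}^{\,j}$: it writes down the entries $\mathbb{P}^{\,j}_{n,m}$ for $0\le m\le n\le 2$, substitutes them into the expressions \eqref{eq:discrMass}, \eqref{eq:discrMom}, \eqref{eq:discrKinE} for mass, momentum and kinetic energy in the new coefficients $\widehat{P}_n=\sum_m\mathbb{P}^{\,j}_{n,m}\widehat{C}_m$, and simplifies the result algebraically to recover the same quantities in the old coefficients. You instead use the variational characterization \eqref{eq:L2omegaOrt} directly: since $v^k\omega_j^{-1}=\alpha_j\,v^k\psi_0^{\alpha_j,u_j}\in V^N_j$ for $k\le 2\le N_v$ by the recurrence \eqref{eq:Hrec}, the orthogonality relation immediately yields $\int_{\Omega_v}v^k(\mathcal{P}^N_jf^N-f^N)\,\der v=0$ pointwise in $x$, from which all four invariances follow. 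Your approach is shorter and explains \emph{why} the first three moments are preserved (the moment functionals live in the new test space), while the paper's approach, though more hands-on, has the side benefit of displaying the explicit $\mathbb{P}^{\,j}$ entries that are in any case needed for the implementation. Your treatment of the potential energy---noting that $\mathcal{P}^N_j$ does not touch $E^N$ and that the Poisson constraint, depending only on the zeroth moment, remains satisfied---matches the paper's in substance.
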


\begin{proof}
  Let {$\{\widehat{C}_{n}\}_{n}$} denote the spectral coefficients
  of $f^N$ and let {$\{\widehat{P}_{n}\}_{n}$} be the spectral
  coefficients of $\mathcal{P}^N_{j}f^N$.  
  The matrix $\mathbb{P}^{\,j}$ defined in \eqref{eq:transfM} can be written in a more compact way
  as follows.  For $n,m\in\Lambda_{N_v}$ fixed, with $n\geq m$, let us
  defined $\Lambda_j^{n,m}\subset \mathbb{N}$ as the set of numbers
  $\ell\in[m,n]\cap$ such that $\ell-m$ is even and
  \begin{equation*}
    \ell=\left\{
      \begin{array}{ll}
	n, &\mbox{if}\quad u_{j-1}=u_j,\\[.5em]
	m, &\mbox{if}\quad \alpha_{j-1}=\alpha_j.
      \end{array}\right.
  \end{equation*}
  Then the entries of $\mathbb{P}^{\,j}$ read, (with the convention
  that $0^0=1$),
  \begin{equation*}
    \mathbb{P}^{\,j}_{n,m} = K_{n,m}\dfrac{1}{a^{m+1}}
    \sum_{\ell\in\Lambda^{n,m}_j}\dfrac{1}{(n-\ell)!
      \left(\frac{\ell-m}{2}\right)!}\left(-\dfrac{2b}{a}\right)^{n-\ell}\left(\dfrac{1}{a^2}-1\right)^{\frac{\ell-m}{2}}.
  \end{equation*}
  Let us consider each conserved quantity at a time.
  Using the definitions above, we have
  \begin{itemize}
  \item[(i)] \textbf{Mass}. 
    According to \eqref{eq:discrMass}, the mass of $f^N\in V^N_{j-1}\times {W}$ is
    $M(f^N)={m\alpha_{j-1}\int_{\Omega_x}\widehat{C}_{0}(x)\,\der x}$ and the mass of
    $\mathcal{P}^N_{j}f^N\in V^N_{j}\times {W}$ is
    $M(\mathcal{P}^N_{j}f^N)={m\alpha_{j}\int_{\Omega_x}\widehat{P}_{0}(x)\,\der x}$. 
    Using the definition of the spectral coefficients from
    \eqref{eq:coeffPj} results in
    \begin{align*}
      M(\mathcal{P}^N_{j}f^N)={m \alpha_{j} \sum_{n\in\Lambda_{N_v}}\int_{\Omega_x}\mathbb{P}^{\,j}_{0,n}\widehat{C}_{n}(x)\,\der x}.
    \end{align*}
    Since the matrix $\mathbb{P}^{\,j}$ is lower triangular,
    $\mathbb{P}^{\,j}_{0,n}=0$ for all $n\neq 0$.
    Moreover, the definition of $\mathbb{P}^{\,j}_{0,0}$ gives,
  \begin{equation}\label{eq:CPMAdapt}
    M(\mathcal{P}^N_{j}f^N)={m \alpha_{j}\,  \int_{\Omega_x}\mathbb{P}^{\,j}_{0,0}\widehat{C}_{0}(x)\,\der x =
    m \alpha_{j} \dfrac{\alpha_{j-1}}{\alpha_{j}}\, \int_{\Omega_x}\widehat{C}_{0}(x)\,\der x} = M(f^N).
  \end{equation}
\item[(ii)] \textbf{Momentum}. 
  The total momentum of $f^N\in V^N_{j-1}\times {W}$ is defined as in \eqref{eq:discrMom}, namely
  \begin{equation*} 
    P(f^N) = u_{j-1}\,M(f^N)+{m\dfrac{\alpha_{j-1}^2}{\sqrt{2}}\,\int_{\Omega_x}\widehat{C}_{1}(x)\,\der x}.
  \end{equation*}
  The total momentum of $\mathcal{P}^N_{j}f^N\in V^N_{j}\times {W}$
  satisfies
  \begin{equation*} 
    P(\mathcal{P}^N_{j}f^N) = u_{j}\,M(\mathcal{P}^N_{j}f^N) + { m\dfrac{\alpha_{j}^2}{\sqrt{2}}\,\int_{\Omega_x}\widehat{P}_{1}(x)\,\der x}
    \overset{\text{\eqref{eq:CPMAdapt}}}{=} u_{j}\,M(f^N) + {m\dfrac{\alpha_{j}^2}{\sqrt{2}}\int_{\Omega_x}\sum_{n\leq 1}\mathbb{P}^{\,j}_{1,n}\widehat{C}_{n}(x)\,\der x}.
  \end{equation*}
  Using the expression for the entries of
  the {transformation} matrix $\mathbb{P}^{\,j}$,
  \begin{equation*}
    \mathbb{P}^{\,j}_{1,0}=-\sqrt{2}\,\dfrac{\alpha_{j-1}}{\alpha_j^2}(u_{j}-u_{j-1}),\qquad
    \mathbb{P}^{\,j}_{n,n}=\left(\dfrac{\alpha_{j-1}}{\alpha_j}\right)^{n+1},\quad \forall\,n\in\Lambda_{N_v},
  \end{equation*}
  we can infer
  \begin{equation*}
    \begin{aligned}
      P(\mathcal{P}^N_{j}f^N) 
      & = u_{j}\,M(f^N) +{m\dfrac{\alpha_{j}^2}{\sqrt{2}}\left(
	-\sqrt{2}\,\dfrac{\alpha_{j-1}}{\alpha_j^2}(u_{j}-u_{j-1})\int_{\Omega_x}\widehat{C}_{0}(x)\,\der x+
	\left(\dfrac{\alpha_{j-1}}{\alpha_j}\right)^{2}\int_{\Omega_x}\widehat{C}_{1}(x)\,\der x\right)}\\
      & = u_{j}\,M(f^N) - u_{j}\,M(f^N) + u_{j-1}\,M(f^N)
      + {m \dfrac{\alpha_{j-1}^2}{\sqrt{2}}\,\int_{\Omega_x}\widehat{C}_{1}(x)\,\der x}=P(f^N).
    \end{aligned}
  \end{equation*}
  
\item[(iii)] \textbf{Energy}.
  Let us first consider the kinetic energy $\mathcal{E}_k$ defined in \eqref{eq:discrKinE}.
  For $f^N\in V^N_{j-1}\times {W}$, we have
  \begin{equation*} 
    {\mathcal{E}_k(f^N) = \dfrac12  m\alpha_{j-1}\left(\dfrac{\alpha_{j-1}^2}{\sqrt{2}}\,\int_{\Omega_x}\widehat{C}_{2}(x)\,\der x+
      u_{j-1}\alpha_{j-1}\sqrt{2}\,\int_{\Omega_x}\widehat{C}_{1}(x)\,\der x+\left(u_{j-1}^2+\dfrac{\alpha_{j-1}^2}{2}\right)\int_{\Omega_x}\widehat{C}_{0}(x)\,\der x\right)}.
  \end{equation*}
  Using the definition of the transformation matrix
  $\mathbb{P}^{\,j}_{n,m}$ for $m\leq n\leq 2$,
  \begin{equation*}
    \mathbb{P}^{\,j}_{2,0}=\dfrac{\sqrt{2}}{2}\,\dfrac{\alpha_{j-1}}{\alpha_j}\left(\dfrac{2(u_{j}-u_{j-1})^2}{\alpha_j^2}+\dfrac{\alpha_{j-1}^2}{\alpha_j^2}-1\right),\qquad
    \mathbb{P}^{\,j}_{2,1}=-2\,\dfrac{\alpha_{j-1}^2}{\alpha_j^3}(u_{j}-u_{j-1}),
  \end{equation*}
  the kinetic energy associated with $\mathcal{P}^N_{j}f^N\in
  V^N_{j}\times {W}$ satisfies
  \begin{equation*} 
    \begin{aligned}
      \mathcal{E}_k(\mathcal{P}^N_{j}f^N) 
      = &\, \dfrac{m\alpha_{j}^3}{2\sqrt{2}}\int_{\Omega_x}\sum_{n\leq 2}\mathbb{P}^{\,j}_{2,n}\widehat{C}_{n}(x)\,\der x+
      mu_{j}\alpha_{j}^2\dfrac{\sqrt{2}}{2}\int_{\Omega_x}\sum_{n\leq 1}\mathbb{P}^{\,j}_{1,n}\widehat{C}_{n}(x)\,\der x \\
      & + \dfrac12 m\alpha_{j}\left(u_{j}^2+\dfrac{\alpha_{j}^2}{2}\right)\int_{\Omega_x}\mathbb{P}^{\,j}_{0,0}\widehat{C}_{0}(x)\,\der x\\
      = &\,\dfrac12 m\bigg(\alpha_{j-1}u_{j-1}^2+\dfrac{\alpha_{j-1}^3}{2}\bigg)\,\int_{\Omega_x}\widehat{C}_{0}(x)\,\der x\\
      & + \dfrac{\sqrt{2}}{2}m\alpha_{j-1}^2u_{j-1}\,\int_{\Omega_x}\widehat{C}_{1}(x)\,\der x + \dfrac{m\alpha_{j-1}^3}{2\sqrt{2}}\int_{\Omega_x}\widehat{C}_{2}(x)\,\der x = \mathcal{E}_k(f^N).
    \end{aligned}
  \end{equation*}
  Concerning the potential energy \eqref{eq:discrPotE}, since the operator $\mathcal{P}^N_{j}$ is independent of the spatial discretization and the potential energy involves only the first Hermite mode, conservation follows from $\alpha^s_{j}\,\widehat{P}^s_{0}(x)=\alpha^s_{j}\,\mathbb{P}^{\,j,s}_{0,0}\,\widehat{C}_{0}^s(x) = \alpha^s_{j-1}\,\widehat{C}^s_{0}(x)$ for all $x\in\Omega_x$.
\end{itemize}
\end{proof}
Note that the conservation properties of the fully discrete adaptive
scheme are independent of the choice of the Hermite parameters.

\section{Numerical experiments}
\label{sec:NumExp}

For the numerical simulations of this work we consider a spectral discretization in space using Fourier basis functions.
Let $\eta_k(x):=e^{\frac{2\pi ik}{L}x}$, $x\in\Omega_x:=[0,L]$ denote the Fourier basis function with wavenumber $k$ in $\Lambda_{N_x}:=\{k\in\mathbb{Z}:\,-N_x\leq k\leq N_x\}$.
Let $N:=(N_v,N_x)\in\mathbb{N}\times\mathbb{N}$, we consider the approximation space $W^N:=\SPAN{\{\eta_k(x)\}_{k\in\Lambda_{N_x}}}$, so that, in the temporal interval $T_j$, the approximated functions $f_j^N$ and $E_j^N$
can be represented in their phase space spectral expansion in
$V^N_{j}\times W^N$ and in $W^N$, respectively, as
\begin{equation*}
  f_j^N(x,v) = \sum_{n\in\Lambda_{N_v}}\sum_{k\in\Lambda_{N_x}} \widehat{C}_{n,k}^j\,\psi^{\alpha_j,u_j}_n(v)\,\eta_k(x),
\end{equation*}
\begin{equation*}
  E_j^N(x) = \sum_{k\in\Lambda_{N_x}} \widehat{E}_k^j\,\eta_k(x).
\end{equation*}
The algebraic equations satisfied by the spectral coefficients $\{\widehat{C}_{n,k}^{j}\}_{(n,k)\in\Lambda_{N_v}\times\Lambda_{N_x}}$
    and $\{\widehat{E}_k^j\}_{k\in\Lambda_{N_x}}$, in each $T_j$, are reported in Appendix~\ref{sec:appA}.

The implementation of the algorithm for the solution of \eqref{eq:ode} is the same as
for the non-adaptive strategy.
We follow Refs.~\cite{CDBM16,GLD15,VD16} and consider a
Jacobian-Free Newton Krylov solver \cite{kelley} with
GMRES for the inner (linear) iterations.
Preconditioning strategies which are useful to reduce the number of
iterations per time step are discussed in Ref. \cite{GLD15}.
Since the change of $(u,\alpha)$ basis is performed at the end of a
time step, it amounts to a trivial modification of the implementation of the non-adaptive algorithm.

In order to efficiently update the Hermite parameters
and avoid rounding error associated with small variations of the
Hermite parameters between two consecutive time steps, we fix two
tolerances $u_{\mathrm{tol}}$ and $\alpha_{\mathrm{tol}}$.
The adaptive Algorithm~\ref{alg:adaptive} computes the new Hermite parameters
at time $t^j$ based on the strategy described in
Section~\ref{sec:PhysB}, but the update is performed only if
\begin{equation}
  \label{eq:AdaptTol}
  |u_{j-1}^s-u_{j}^s| \geq u_{\mathrm{tol}},\quad\mbox{and}\quad
  \dfrac{|\alpha_{j-1}^s-\alpha_{j}^s|}{|\alpha_{j-1}^s|} \geq \alpha_{\mathrm{tol}}.
\end{equation}
If not otherwise specified, the parameters for the nonlinear iterative
solver are set as follows: 
the maximum number of nonlinear iterations is $500$; 
the maximum number of linear iterations is $1000$; 
the maximum error tolerance for residual in the inner iteration is
$\eta_{\max}= 0.9$; if not otherwise specified, we take $10^{-9}$ as
absolute and relative error tolerances of the nonlinear iteration.

Last, all the numerical experiments reported in this paper used the collisional operator defined in Sec.~\ref{sec:Coll}. We have also performed some numerical experiments where we compared the filtering technique of \cite{FilbetXiong} with the artificial collisional term used in this work on two test cases. 
The first test case is the one used in this work (Section 5.2) and analogous to the test of \cite{Di2019} in Section 4.3. The second test case is the one proposed in Section 4.2 of \cite{FilbetXiong}. For both test cases, the collisional operator used in this work and the filtering technique of \cite{FilbetXiong} produced results that are qualitatively similar (although we note that in the first test unphysical oscillations were wider for the filtering technique versus our collisional operator with $\nu=5$) and hence are not reported here.

A detailed numerical study of the effect of the artificial collisional operator on the adaptive and non-adaptive methods is reported in Appendix~\ref{sec:NumExpColl}.

\subsection{Manufactured solution}
We begin by using the method of manufactured solutions to study the
new algorithm under controlled conditions.
Let us consider the domain $\Omega:=\Omega_x\times\Omega_v$ with
$\Omega_x=[0,2\pi]$ and $\Omega_v=\mathbb{R}$, and the temporal
interval $T=[0,1]$. Let us assume that the exact solution is given by
\begin{equation}\label{eq:fex}
   f^{\ex}(t,x,v)=\big(2-\cos(2x-2\pi t)\big)\pi^{-1/2}e^{-\left(\frac{v-w(t)}{\beta(t)}\right)^2}, \qquad (x,v)\in\Omega,\; t\in T,
\end{equation}
where $w(t)$ and $\beta(t)\neq 0$ will be specified case by case in our
numerical experiments.
Assume that $f^e=f^i= f^{\ex}$ from \eqref{eq:fex} is the exact solution
of \eqref{eq:VlasovPoisson} for all $(x,v)\in\Omega$ and $t\in T$.
This entails that $E(t,x)\equiv 0$ for all $x\in\Omega_x$, $t\in T$ if
we consider the homogeneous Dirichlet boundary condition $E(t,0)=0$ for all $t\in T$.
Therefore, $f^{\ex}$ in \eqref{eq:fex} satisfies a simplified
Vlasov--Poisson problem, namely the scalar advection problem:
For $f_0:= f^{\ex}(0,x,v)\in V_{|_{t=0}}$, and $S\in
  C^0(T;L^2(\Omega))$, find $f(t,x,v)\in C^1(T;L^2(\Omega))\cap
  C^0(T;V)$ such that
\begin{equation}\label{eq:VlasovPoissonSimpl}
  \begin{aligned}
    \partial_tf+v\,\partial_x f =S, &\qquad\mbox{in}\;\Omega\times T,\\
    f(0,x,v)=f_0,                   &\qquad\mbox{in}\;\Omega.
  \end{aligned}
\end{equation}
with
\begin{equation}\label{eq:rhs}
  \begin{aligned}
    S(t,x,v) := 
    & \,\partial_t f^{\ex}+v\,\partial_x  f^{\ex} =
    2(v-\pi)\sin(2x-2\pi t)\pi^{-1/2}e^{-\left(\frac{v-w}{\beta}\right)^2} \\
    & + 2(2-\cos(2x-2\pi t))\pi^{-1/2}\dfrac{v-w}{\beta} e^{-\left(\frac{v-w}{\beta}\right)^2}
    \left(\dfrac{d_t w}{\beta}+\dfrac{v-w}{\beta}\dfrac{d_t\beta}{\beta}\right),
  \end{aligned}
\end{equation}
where $d_tw$ and $d_t\beta$ are the time derivative of $w$
  and $\beta$, respectively.
The functions $ f^{\ex}(t,\cdot,\cdot),\, S(t,\cdot,\cdot)\in L^2(\Omega)$
for every $t\in T$, admit the spectral expansion
\begin{equation}\label{eq:expansion}
  \begin{aligned}
     f^{\ex}(t,x,v) &= \sum_{n\in\mathbb{N}}\sum_{k\in\mathbb{Z}} C_{n,k}(t)\,\psi^{\beta(t),w(t)}_n(v)\,\eta_k(x),\\
    S(t,x,v)     &= \sum_{n\in\mathbb{N}}\sum_{k\in\mathbb{Z}} R_{n,k}(t)\,\psi^{\beta(t),w(t)}_n(v)\,\eta_k(x),
  \end{aligned}
\end{equation} 
with non-zero coefficients 
$C_{0,0}(t)= 2$,
$C_{0,2}(t) = -e^{-i 2\pi t}/2$ and
$C_{0,-2}(t) = -e^{i 2\pi t}/2$.
Similarly, algebraic manipulations of the right hand side
\eqref{eq:rhs} using the Hermite recurrence relations yield the
following non-zero expansion coefficients,
\renewcommand{\arraystretch}{2}
\begin{equation*}
  \begin{array}{lll}
    R_{0,0}(t) = 2 \dfrac{d_t\beta}{\beta},
    & \; R_{0,2}(t) = \bigg(i\pi-iw -\dfrac12\dfrac{d_t\beta}{\beta}\bigg)e^{-i 2\pi t} ,
    & \; R_{0,-2}(t) = \bigg(iw-i\pi-\dfrac12\dfrac{d_t\beta}{\beta}\bigg) e^{i 2\pi t},\\
    R_{1,0}(t) = 2\sqrt{2} \dfrac{d_t w}{\beta},
    & \; R_{1,2}(t) = -\dfrac{1}{\sqrt{2}}\left(i\beta + \dfrac{d_t w}{\beta} \right)e^{-i 2\pi t},
    & \; R_{1,-2}(t) = \dfrac{1}{\sqrt{2}}\left(i\beta-\dfrac{d_t w}{\beta}\right) e^{i2\pi t},\\
    R_{2,0}(t) = 2\sqrt{2} \dfrac{d_t\beta}{\beta},
    & \; R_{2,2}(t) = -\dfrac{1}{\sqrt{2}}\dfrac{d_t\beta}{\beta}e^{-i 2\pi t},
    & \; R_{2,-2}(t) = -\dfrac{1}{\sqrt{2}}\dfrac{d_t\beta}{\beta}e^{i 2\pi t}.
  \end{array}
\end{equation*}
In other words, $ f^{\ex}$ and $S$ belong to the approximation spaces
$V_{\beta,w}^{N}=\SPAN{\{\psi^{\beta,w}_n(v)\}_{n\in\Lambda_{N_v}}}\times W^N$,
where $N=(N_v,N_x)$, with $N_v=0$, $N_x=2$ for $f^{\ex}$ and $N_v=2$, $N_x=2$ for $S$.

Let us consider the spectral Galerkin discretization of
\eqref{eq:VlasovPoissonSimpl} in $ V^N_{\alpha,u}$, where we take
$N_v\geq2$ and $N_x\geq2$.
In particular, by taking $N_x\geq 2$, we ensure that the numerical
discretization is not affected by any spatial error
but only by an approximation error in velocity and time.
Since $S\in V^N_{\beta,w}$, using its spectral representation
\eqref{eq:expansion}, results in
\begin{equation*}
  \begin{aligned}
    (S,(\pi 2^n n!)^{-1/2}H_n(\xi)\eta_{-k}(x))_{L^2(\Omega)} 
    & =
    \sum_{m\in\Lambda_{N_v}} R_{m,k}(t) \int_{\Omega_v}\psi^{\beta,w}_m(v)\psi^{\alpha,u}_n(v)
    e^{\left(\frac{v-u}{\alpha}\right)^2}\dfrac{\sqrt{\pi}}{\alpha}\,\der v\\
    & = \sum_{m\in\Lambda_{N_v}} \mathbb{P}_{n,m}R_{m,k}(t),
  \end{aligned}
\end{equation*}
where $\mathbb{P}\in\mathbb{R}^{N_v\times N_v}$ is the matrix
\eqref{eq:transfM} associated with the operator in
Definition~\ref{def:Pj} from $V^N_{\beta,w}$ onto $V^N_{\alpha,u}$.
Since $R_{m,k}\equiv 0$ for all $m\in\Lambda_{N_v}\setminus\{0,1,2\}$,
the fully discrete approximation \eqref{eq:ode} of
\eqref{eq:VlasovPoissonSimpl}, can be recast as,
\begin{equation}\label{eq:MMSode}
  \begin{aligned}
    \dfrac{\widehat{C}_{n,k}^{j+1}-\widehat{C}_{n,k}^j}{\Delta t_j} + 
    & \sqrt{\dfrac{n}{2}}\dfrac{2\pi}{L}\, ik\, \alpha_j\, \widehat{C}_{n-1,k}^{j+1/2}
    + \dfrac{2\pi}{L} i k\, u_j\, \widehat{C}_{n,k}^{j+1/2}
    + \sqrt{\dfrac{n+1}{2}}\dfrac{2\pi}{L}\, ik\,\alpha_j\, \widehat{C}_{n+1,k}^{j+1/2}=\\[.5em]
    & \mathbb{P}_{n,0}R_{0,k}^{j+1/2}+\mathbb{P}_{n,1}R_{1,k}^{j+1/2}+\mathbb{P}_{n,2}R_{2,k}^{j+1/2},
    \qquad \forall\,(n,k)\in\Lambda_{N_v}\times\Lambda_{N_x},\, j\in\Lambda_{N_t},
  \end{aligned}
\end{equation}
where the right hand side vanishes for any
$k\in\Lambda_{N_x}\setminus\{-2,0,2\}$.
If not otherwise specified, the spectral numerical discretization is
performed without the artificial collisional operator ($\nu=0$).

\subsubsection{Test case 1: the exact and numerical solutions belong to
  the same approximation space; $\beta(t)=1$, $w(t)=0$, for any $t\in T$, $\alpha=1$,
  $u=0$.}
As a first test case, to benchmark the code, we consider as exact
solution the function in \eqref{eq:fex} with $\beta(t)=1$ and $w(t)=0$ for
all $t\in T$. We solve the fully-discrete problem \eqref{eq:MMSode}
with the non-adaptive algorithm.
The spectral discretization \eqref{eq:MMSode} is performed in the
finite-dimensional space $V^N_{\alpha,u}$ with $\alpha=1(=\beta)$ and
$u=0(=w)$.
The number of Fourier modes is $N_x=4$ and the time step is set to
$\Delta t=10^{-2}$.
The plot on the left of Figure~\ref{fig:MMSb1w0a1u0} shows the
evolution of the $L^2$-error for different number of Hermite modes.
One can observe that the approximation error is dominated by the
temporal error, and it is independent of the numbers of Hermite modes (see also
the plot on the right of Figure~\ref{fig:MMSb1w0a1u0}). This is expected
since $\alpha=\beta$, $u=w$ and the exact solution $ f^{\ex}$ belongs to
the finite dimensional space $V^{(0,2)}_{\beta,w}$.
On the right of Figure~\ref{fig:MMSb1w0a1u0}, the $L^2$-error obtained
with $N_v=16$ Hermite modes and $N_x=4$ Fourier modes is computed for
different time steps.
As expected, the convergence rate in time is $2$.

\begin{figure}[H]
  \centering
  \includegraphics[width=0.475\textwidth]{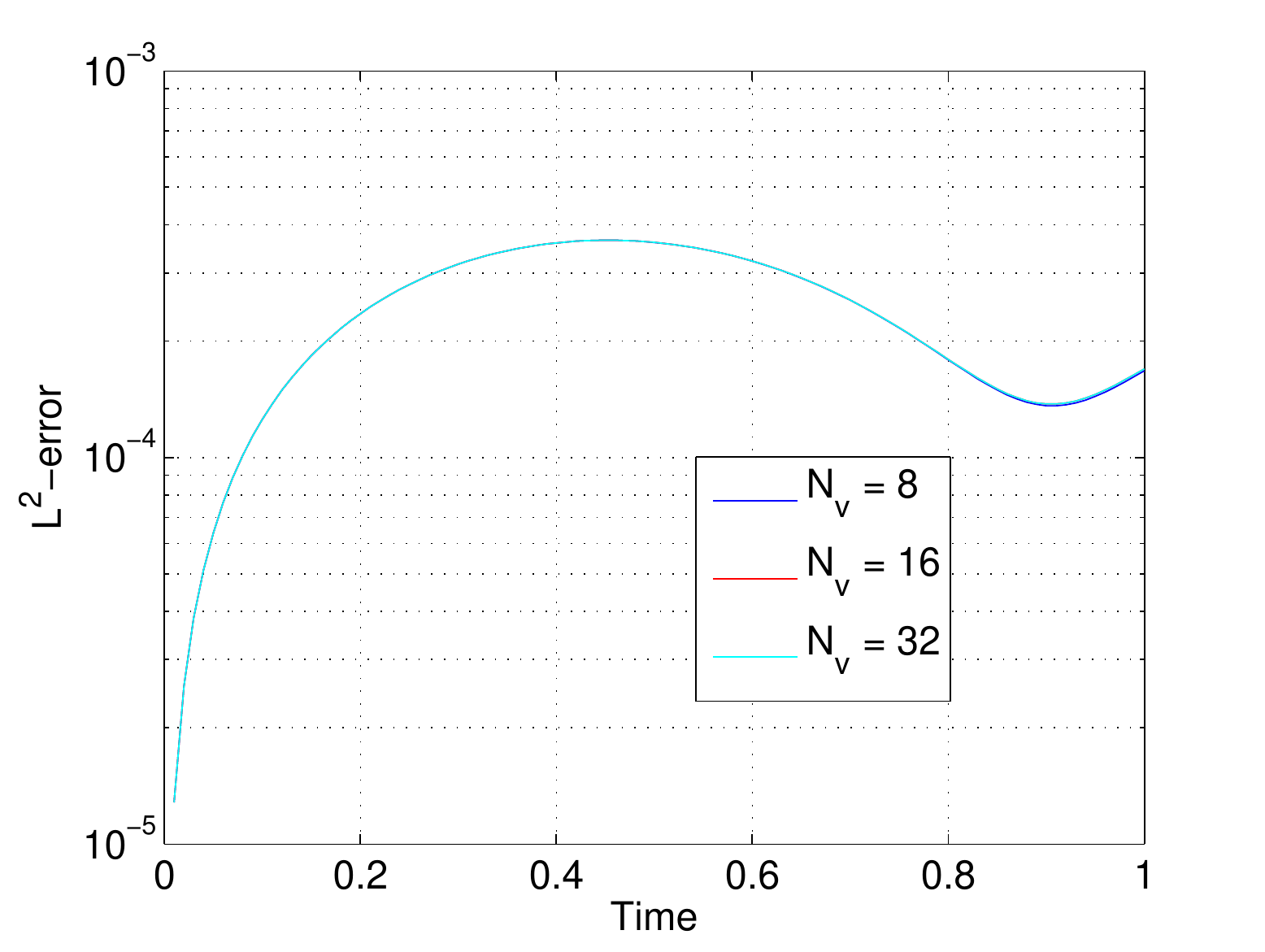}
  \includegraphics[width=0.475\textwidth]{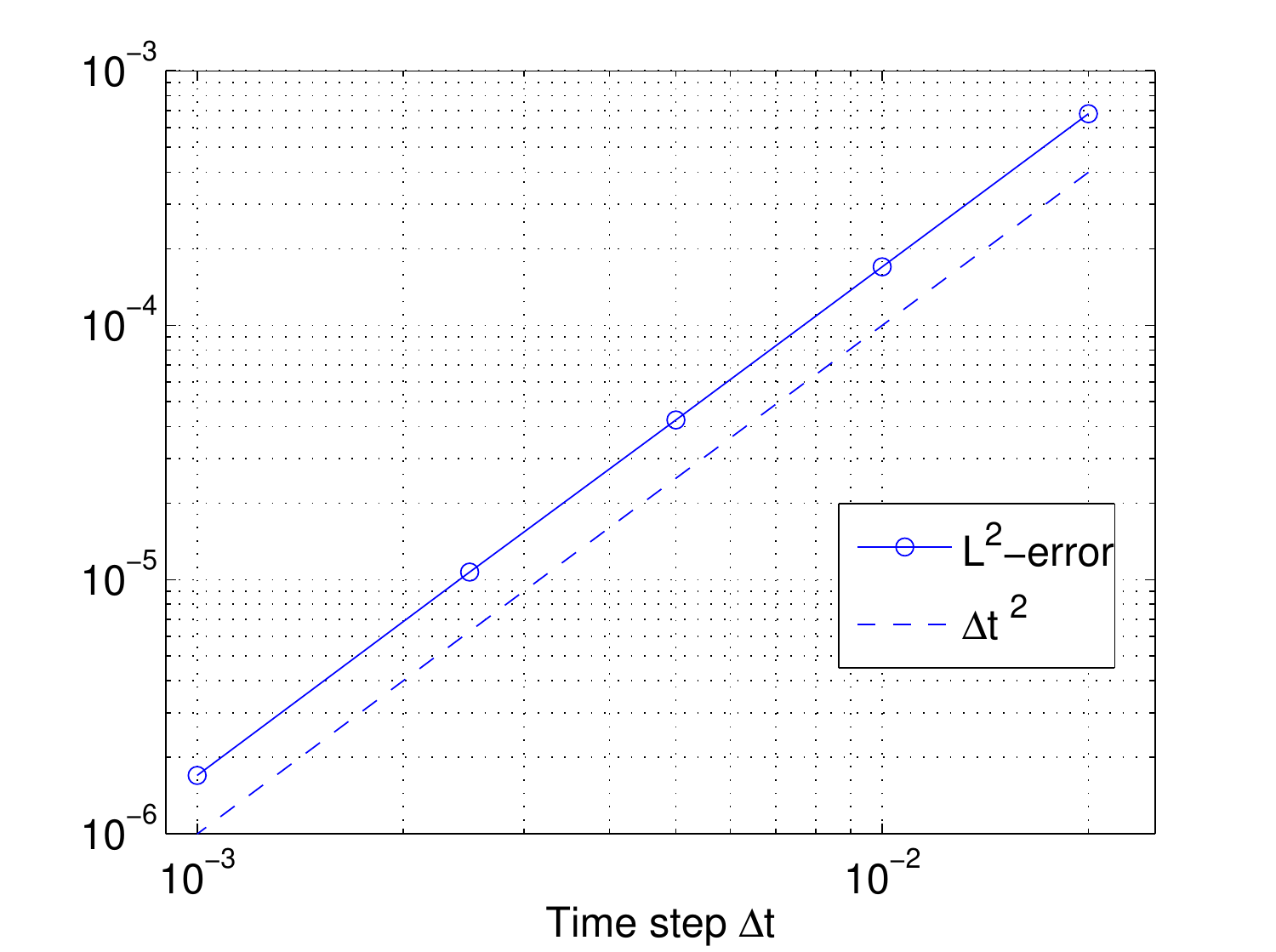}
  \caption{Manufactured solution. Evolution of the $L^2$-error for different numbers of Hermite modes (left)
    with $N_x=4$ Fourier modes and $\Delta t=10^{-2}$.
    $L^2$-error vs. time step (right) for $N=(16,4)$.}
  \label{fig:MMSb1w0a1u0}
\end{figure}

\subsubsection{Test case 2: the exact and numerical solutions belong to
  different approximation spaces; $\beta(t)=1+t$, $w(t)=0$, for all $t\in T$,
  $\alpha=1$, $u=0$.}
As a second test case we take as exact solution the function
$ f^{\ex}$ from~\eqref{eq:fex} with $\beta(t)=1+t$ and $w(t)\equiv 0$.
We consider again the non-adaptive algorithm.
In order to show that failing to select accurate Hermite parameters,
and hence approximation spaces, can have detrimental effects on the
numerical solution (cf. Sec. 3), we use a spectral discretization in the
finite-dimensional space $V^N_{\alpha,u}$ with constant $\alpha=1$ and
$u=0$.

Figure~\ref{fig:MMSbtw0a1u0} {(left)} shows the $L^2$-error obtained
for $\Delta t=10^{-2}$ with $N_x=4$ Fourier modes and various $N_v$,
while the evolution of the exact and ``approximate'' Hermite scaling
parameters is presented in Figure~\ref{fig:MMSbtw0a1u0} {(right)}.
The discrepancy between $\alpha$ and the exact value of $\beta$
increases linearly over time.
In light of this behavior, we can identify three regions in the error
evolution {(left plot)}: 
$(1)$ at the beginning of the simulation, until around time $t=0.05$,
the accuracy of the simulation is only affected by the temporal error
dominating over the error in velocity and all curves for different
values of $N_v$ overlap perfectly;
$(2)$ then, until around time $t=0.4$, the error in velocity dominates
and increasing the number of Hermite modes is effective in improving
the accuracy of the simulation;
$(3)$ for $t>0.45$ the accumulation of error and the lack of accuracy
introduced by selecting the wrong Hermite parameters yield to an
{unstable behavior.}
One can notice the similarities with the considerations made in Sec. 3.

\begin{figure}[H]
  \centering
  \includegraphics[width=0.48\textwidth]{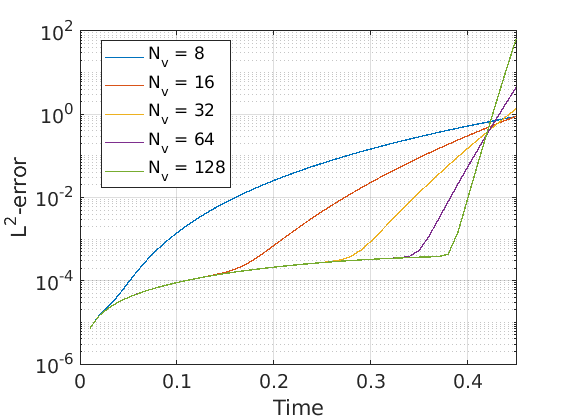}
  \includegraphics[width=0.48\textwidth]{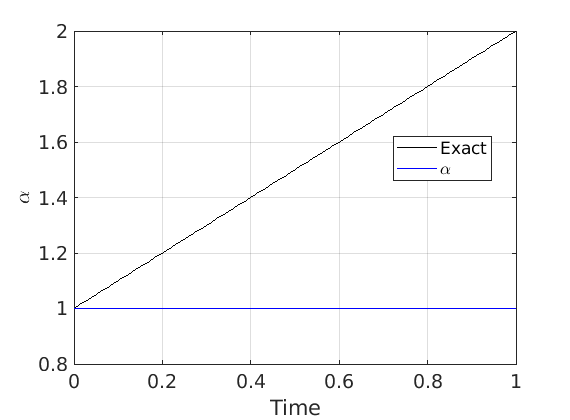}
  \caption{{Manufactured solution. Evolution of the $L^2$-error for
    different numbers of Hermite modes (top left) with $N_x=4$ Fourier
    modes and $\Delta t=10^{-2}$.  Evolution of $\beta$ (exact) and
    $\alpha$ (top right).
    }}
  \label{fig:MMSbtw0a1u0}
\end{figure}

\subsubsection{Test case 3: $\beta(t)=1+t$, $w(t)=0$, for all $t\in T$, while
  $\alpha_j$ and $u_j$ are chosen according to the physics-based
  criterion.}
{We now consider} a spectral discretization in each time interval $T_j$ with
finite-dimensional spaces $V^N_{j}$ where the Hermite parameters are
adapted using Algorithm~\ref{alg:adaptive} and the physics-based criterion of Section~\ref{sec:PhysB}.
The $L^2$-error for different numbers of Hermite modes and time steps
{$\Delta t\in\{10^{-2}, 10^{-3}\}$}
is shown in Figure~\ref{fig:MMSbtw0atu0PhyB} (left).
On the right plot of Figure~\ref{fig:MMSbtw0atu0PhyB}, the evolution of
the Hermite scaling parameter $\alpha$ is presented for different
values of $N_v$.
{One can notice that the solution converges
to the right solution and the $L^2$-error is dominated by the
approximation in the temporal variable, with a minimal dependence on the number of Hermite modes $N_v$. Moreover, the results are practically indistinguishable from the ones obtained by imposing $\alpha_j =\beta(t^j)$ and $u_j=w(t^j)$, which are not reported for brevity. This proves} 
that the physics-based algorithm can track the correct Hermite parameters effectively.

Similar results are obtained with the physics-based adaptive algorithm
when also the shifting parameter is changing in time and are not
reported for brevity.

\begin{figure}[H]
  \centering
  \includegraphics[width=0.475\textwidth]{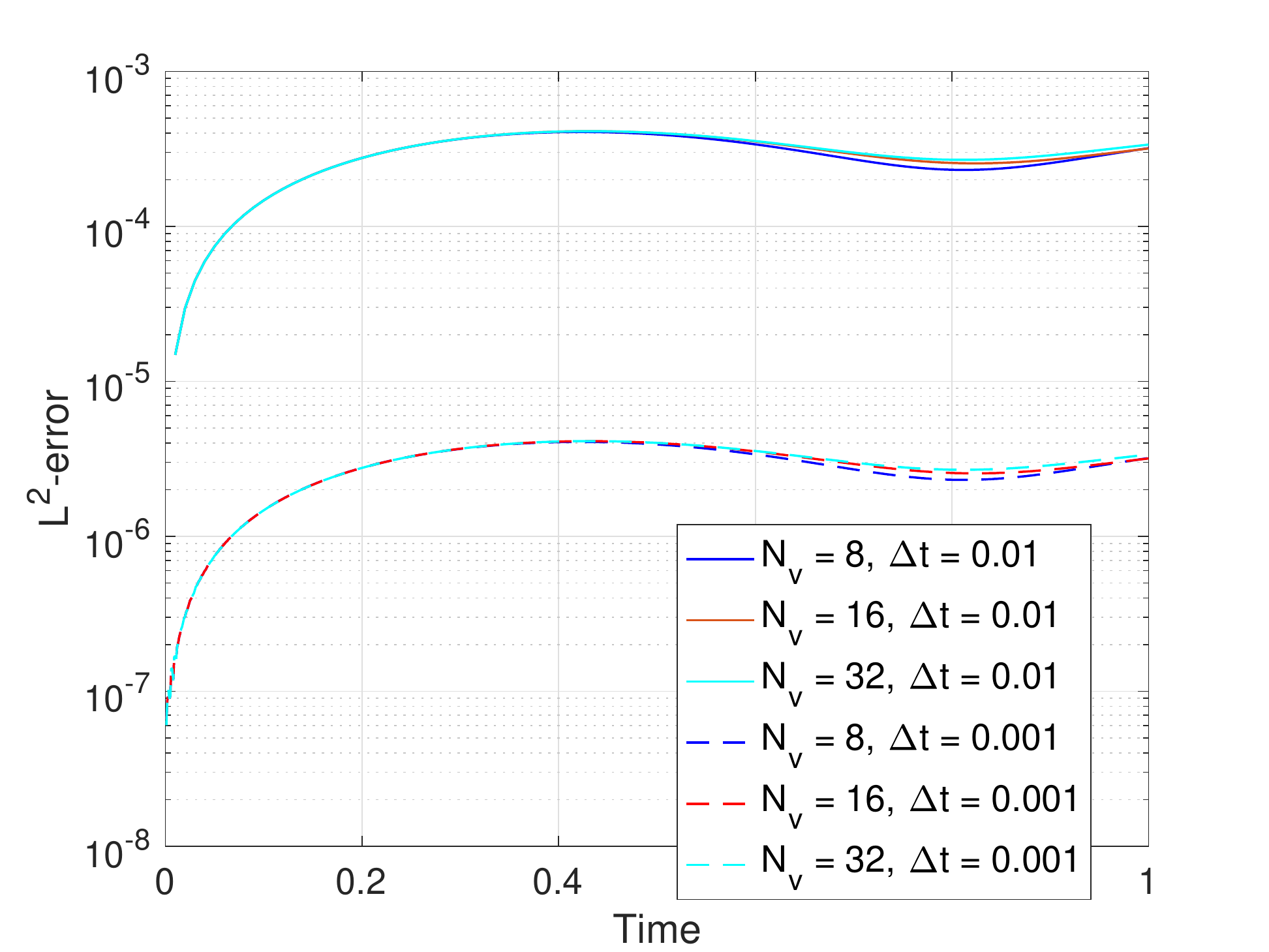}
  \includegraphics[width=0.475\textwidth]{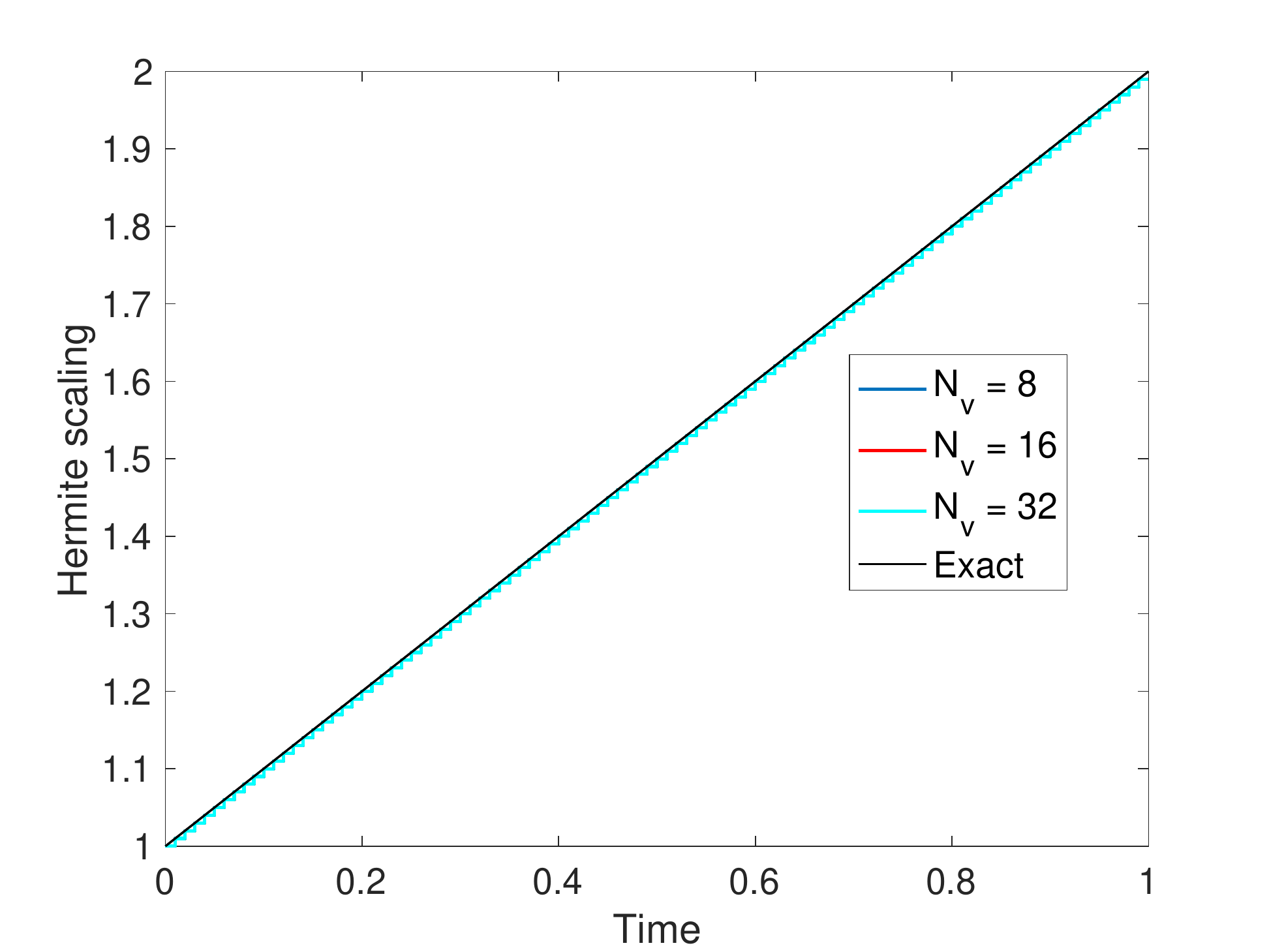}  
  \caption{Manufactured solution. Physics-based adaptivity of the
    Hermite basis functions.  Evolution of the $L^2$-error for
    different numbers of Hermite modes (left).  Evolution of
    $\beta=1+t$ and $\alpha$ for $\Delta t=10^{-2}$ (right). The
    Hermite shift is constant over time, $w=0$.}
  \label{fig:MMSbtw0atu0PhyB}
\end{figure}

\subsubsection{Effects of $\alpha_{\mathrm{tol}}$ and $u_{\mathrm{tol}}$ on the adaptive physics-based algorithm}
In this test case we take as exact solution the function
$ f^{\ex}$ from~\eqref{eq:fex} with $\beta(t)=1.2+\tanh(t-5)$ and $w(t)\equiv 0$ in the temporal interval $(0,10]$. We consider a spectral discretization in each time interval $T_j$ with finite-dimensional spaces $V^N_{j}$, $N=(4,2)$, where the Hermite parameters are adapted using Algorithm~\ref{alg:adaptive} and the physics-based criterion of Section~\ref{sec:PhysB}. In particular we study the effect of the tolerance $\alpha_{\mathrm{tol}}$ in \eqref{eq:AdaptTol} on the performances of the adaptive strategy.

In Figure~\ref{fig:MMStol} we report the $L^2$-error for different values of the time step $\Delta t\in\{10^{-1},10^{-2},10^{-3}\}$ and for different tolerances $\alpha_{\mathrm{tol}}\in\{10^{-1},10^{-2},10^{-3}\}$.
We observe that for $N_v=8$, plotted on the left, the error improves as $\alpha_{\mathrm{tol}}$ decreases except for sufficiently large time steps where the error introduced by the temporal discretization is dominant and no effect of the tolerance $\alpha_{\mathrm{tol}}$ is observed. Moreover, as the number of Hermite modes grows, the effect of $\alpha_{\mathrm{tol}}$ is negligible, as it can be seen on the right plot of Figure~\ref{fig:MMStol}. This is expected, since with more Hermite modes available rescaling the Hermite basis becomes less important, and indeed the plot shows that the error is dominated by the temporal discretization error in all cases considered. Overall, $\alpha_{\mathrm{tol}}\sim 10^{-2}$ is sufficient to ensure good accuracy in velocity space for this example.
In Figure~\ref{fig:MMStolalpha}, we monitor the evolution of $\alpha$: For tolerances of the order of $10^{-2}$ or lower, the Hermite scaling computed with the physics-based criterion is able to reproduce very accurately the exact behavior.

\begin{figure}[H]
  \hspace{-1.1em}\includegraphics[scale=0.4]{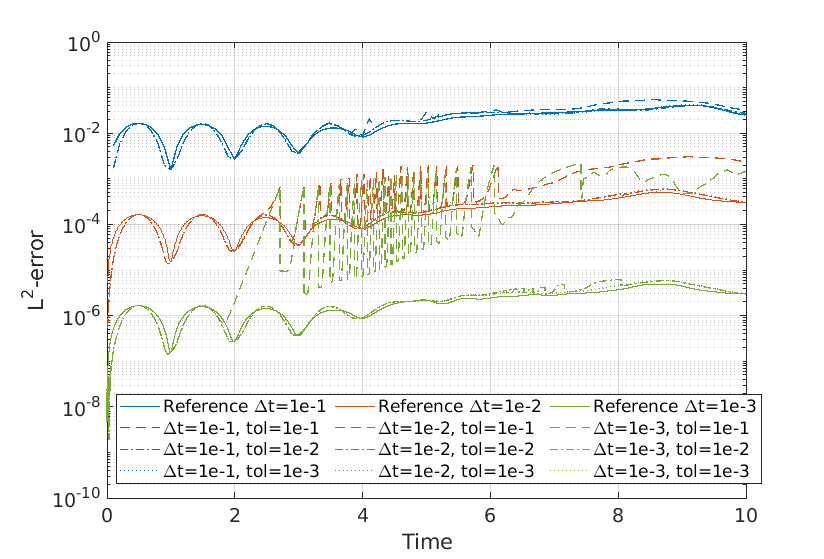}\hspace{-2.05em}
  \includegraphics[scale=0.4]{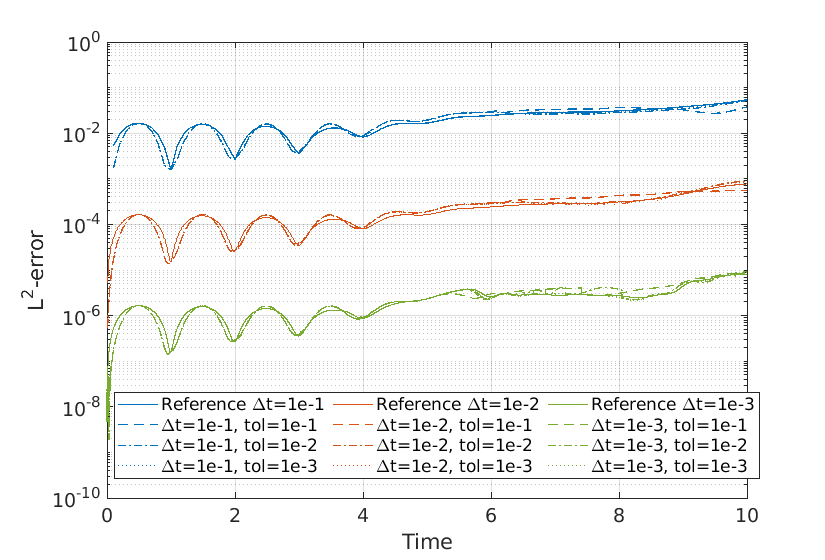}
  \caption{Manufactured solution. Physics-based adaptivity of the
    Hermite basis functions with different $\alpha_{\mathrm{tol}}$ and different time step $\Delta t$. Evolution of the $L^2$-error for $N_v=8$ (left) and $N_v=32$ (right). The
    Hermite shift is constant over time, $w=0$.}
  \label{fig:MMStol}
\end{figure}

\begin{figure}[H]
  \includegraphics[width=0.325\textwidth]{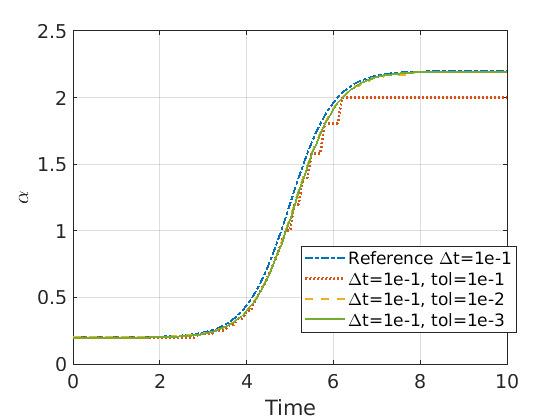}
  \includegraphics[width=0.325\textwidth]{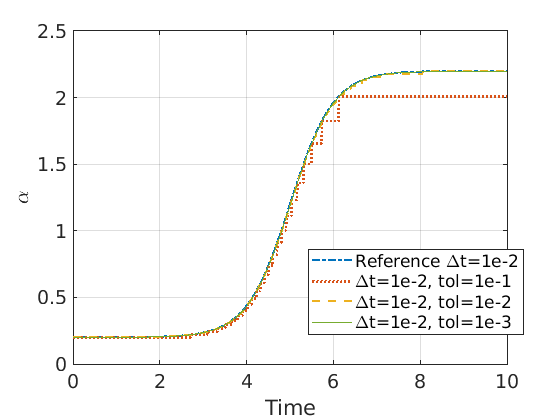}
  \includegraphics[width=0.325\textwidth]{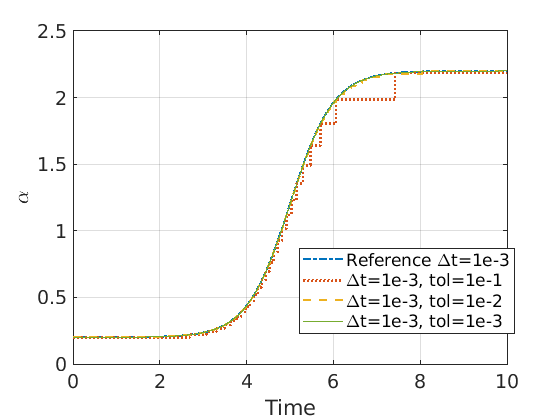}
  \caption{Manufactured solution. Physics-based adaptivity of the
    Hermite basis functions with different $\alpha_{\mathrm{tol}}$. and different time step $\Delta t$. Evolution of $\alpha$ for $\Delta t=10^{-1}$ (left), $\Delta t=10^{-2}$ (center), and $\Delta t=10^{-3}$ (right). The number of Hermite modes is $N_v=8$.}
  \label{fig:MMStolalpha}
\end{figure}

In summary, the results shown in this subsection demonstrate the
importance of choosing the correct approximation space.
In the examples considered, while being too far off from the correct approximation space triggered some numerical instability, the
physics-based algorithm is capable of tracking the correct evolution
of the Hermite parameters and maintain an accurate numerical solution.

\subsection{Two-stream instability}\label{sec:TSI}
We now consider the two-stream instability, a classical physics benchmark for kinetic plasma codes.
It consists of a linear instability excited by two populations of
particles counter-streaming with a sufficiently large relative speed.

The initial configuration consists of two drifting Maxwellian electron populations with equal temperature and a Maxwellian at rest neutralizing ion population
\begin{equation}
  \begin{aligned}
    & f^e_0(x,v)=\dfrac{n_0^e}{\sqrt{\pi}}\dfrac{1}{\alpha^e}\left(1+\dfrac{\epsilon}{2}\cos\left(\frac{2\pi}{L}x\right)\right)e^{-\xi_e^2},
    & \qquad \xi_e:=\dfrac{v-u^e}{\alpha^e}, \\
    & f^i_0(x,v)=\dfrac{1}{\alpha^i\sqrt{\pi}} e^{-\xi_i^2},
    & \qquad \xi_i:=\dfrac{v-u^i}{\alpha^i},   \label{ic}
  \end{aligned}
\end{equation}
with $\epsilon=10^{-3}$, $n_0^e=(\frac{1}{2},\frac{1}{2})$ and
$\alpha^e=(\frac{1}{2},\frac{1}{2})$, $u^e=(1,-1)$ and
$\alpha^i=\sqrt{2\frac{m_e}{m_i}\frac{T_i}{T_e}}$, $u^i=0$.
We set $\frac{m_i}{m_e}=1836$ and $\frac{T_i}{T_e}=1$.
Let us consider the Vlasov--Poisson problem in the computational
domain $\Omega_x:=[0,L]$, with $L=2\pi$, and $\Omega_v:=\mathbb{R}$.
The temporal interval $T:=[0,T_f]$ is divided into $N_t=T_f/\Delta t$
subintervals with uniform $\Delta t=0.05$ and the temporal
discretization is performed using the implicit midpoint rule.
In the adaptive scheme, we take as tolerances \eqref{eq:AdaptTol} for
the Hermite parameters $\alpha_{\mathrm{tol}}=10^{-1}$ and
$u_{\mathrm{tol}}=10^{-2}$.

\subsubsection{Comparison for fixed number of Hermite modes and fixed $\nu$}\label{case0}
As a first numerical test, let us consider the temporal interval
$T=[0,100]$, and the spectral discretization \eqref{eq:VFhL2w} with
$2N_x+1=101$ Fourier modes and $N_v=250$ Hermite modes.
The collisional coefficient of the operator \eqref{eq:CollDC} is set
to $\nu=5$.

As is well known, the instability associated with two counter-propagating beams of particles with equal charge proceeds with the development of particle bunching and trapping and the creation of a vortex in phase space. As a result, the drift velocity of each beam decreases while the width of each beam particle distribution increases. This is evident in Figure~\ref{fig:TwoSfeColl} which shows the evolution of the distribution function for the adaptive (left) and non-adaptive (right) schemes. The evolution of the Hermite parameters is shown in
Figure~\ref{fig:TwoSAeUeColl}, where we can notice a $15\%$ decrease of the beam drift velocity (for each beam). The $\alpha$ parameter, which is associated with the width of the particle distribution, increases by $60\%$.

The adaptive scheme performs better than the non-adaptive one, as
evident when comparing the plots of the distribution function in
Figure~\ref{fig:TwoSfeColl}, where can see that, {at fixed resolution in velocity}, the oscillations of the numerical distribution function are eliminated by the adaptive algorithm.
{More filaments and features of the solution can be captured by increasing the resolution in velocity, as shown in Figure~\ref{fig:TSIconv}.}
Similar considerations can be drawn from Fig.~\ref{fig:TSIfmaxminTime} (left), which shows the time evolution of the minimum and maximum values of the distribution function in the phase space domain. In collisionless plasmas, the maximum and minimum values of the distribution function are conserved and therefore monitoring these quantities provides a measure of accuracy of the overall numerical scheme. One can see that the non-adaptive algorithm has large oscillations of both values while in the adaptive algorithm both quantities are preserved quite well: the maximum of the distribution function changes slightly, with relative error smaller than $5\%$ over the whole time interval, while the minimum becomes slightly negative at the end of the simulation but with error (relative to the peak value of the distribution function) less than $3\%$.

Figure~\ref{fig:TSIfmaxminTime} (right) reports the evolution of the error
in the conserved quantities momentum and energy, evaluated at the approximate solution with respect to the initial condition. Note that conservation of mass is not shown in the figure since its error is exactly zero.
{The conservation laws of momentum and energy} are satisfied up to the tolerance of the nonlinear solver~\cite{GLD15}.
{This numerical experiment verifies the theoretical prediction of Sect.~\ref{sec:conservation:properties}.}

\begin{figure}[H]
  \centering
  \includegraphics[width=0.4\textwidth]{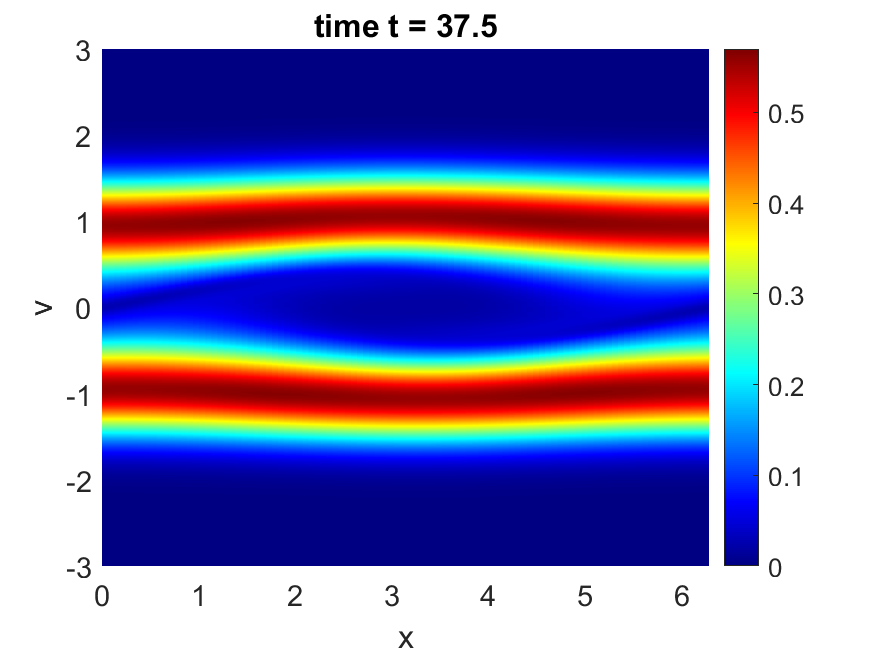}
  \includegraphics[width=0.4\textwidth]{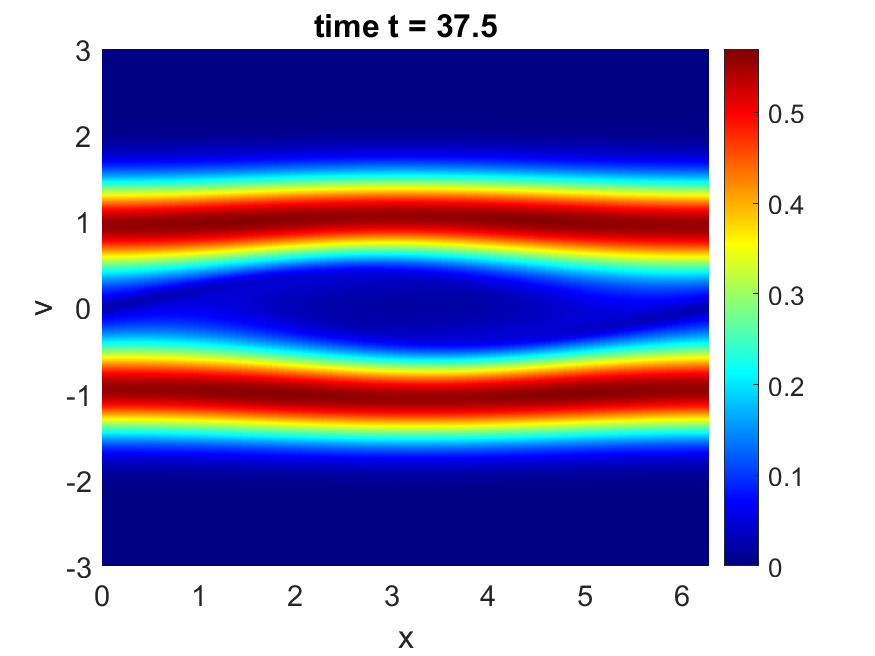}
  \includegraphics[width=0.4\textwidth]{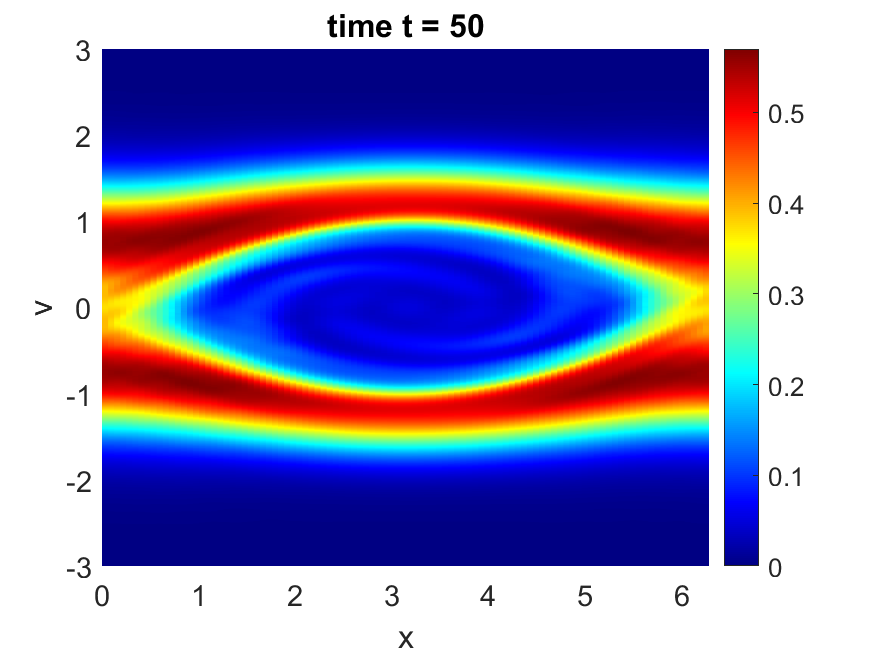}
  \includegraphics[width=0.4\textwidth]{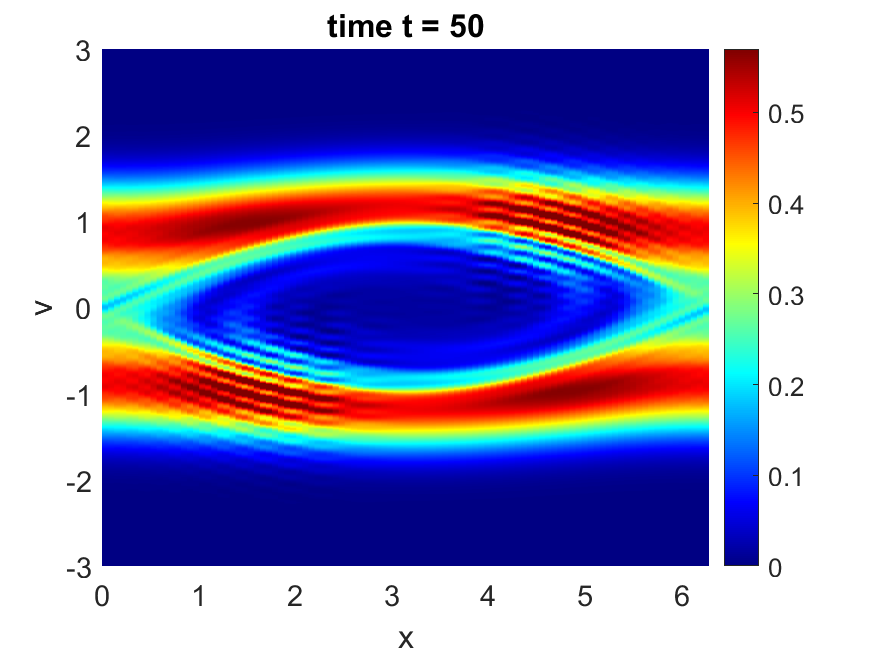}
  \includegraphics[width=0.4\textwidth]{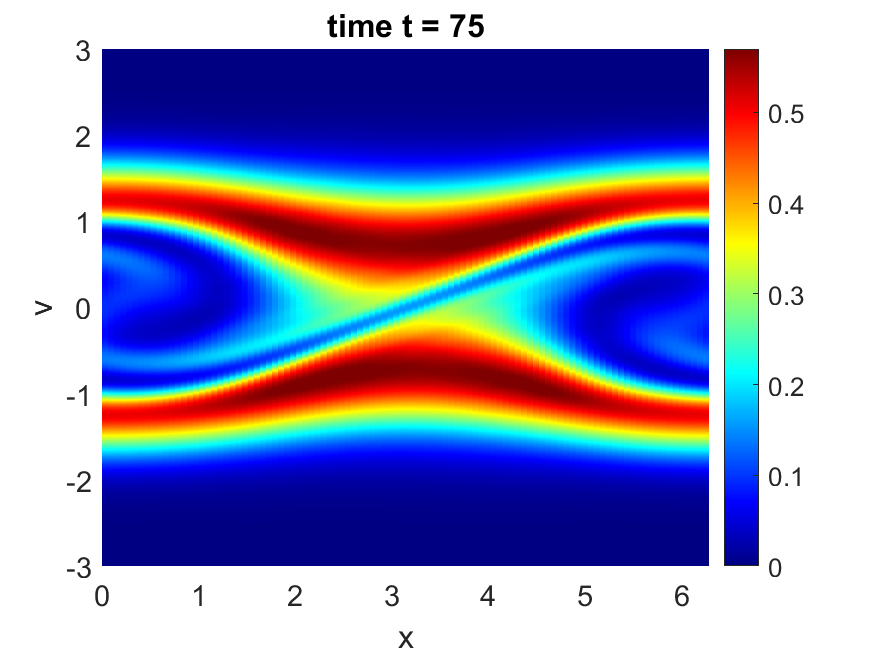}
  \includegraphics[width=0.4\textwidth]{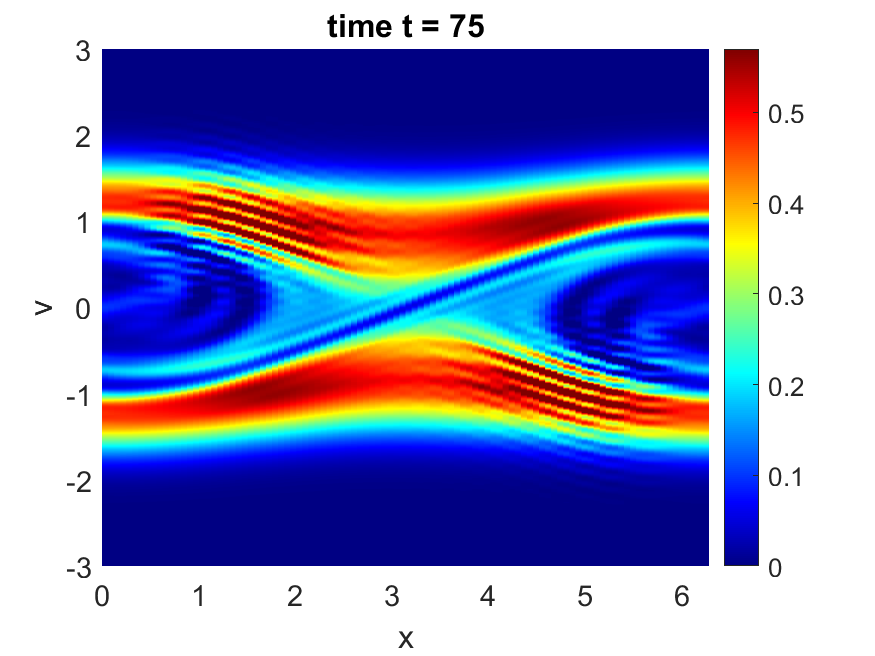}
  \includegraphics[width=0.4\textwidth]{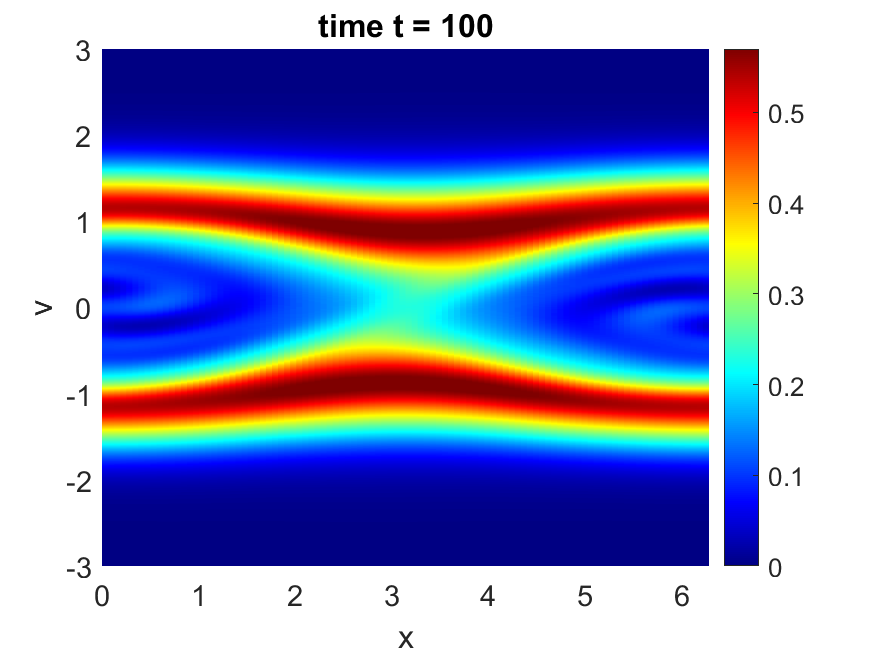}
  \includegraphics[width=0.4\textwidth]{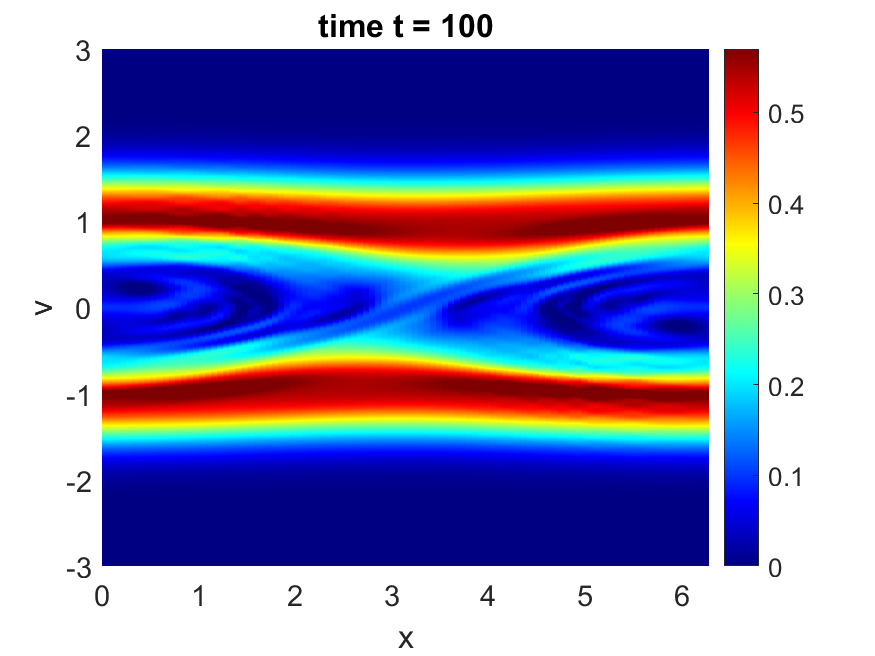}
  \caption{Two-stream instability. Distribution function at different times obtained with the adaptive (left column) and non-adaptive (right column) schemes. 
    The number of Hermite modes is $N_v=250$, the number of Fourier modes is $N_x=50$ and the collisional coefficient is $\nu=5$.
    }
  \label{fig:TwoSfeColl}
\end{figure}

\begin{figure}[H]
  \centering
  \includegraphics[width=0.32\textwidth]{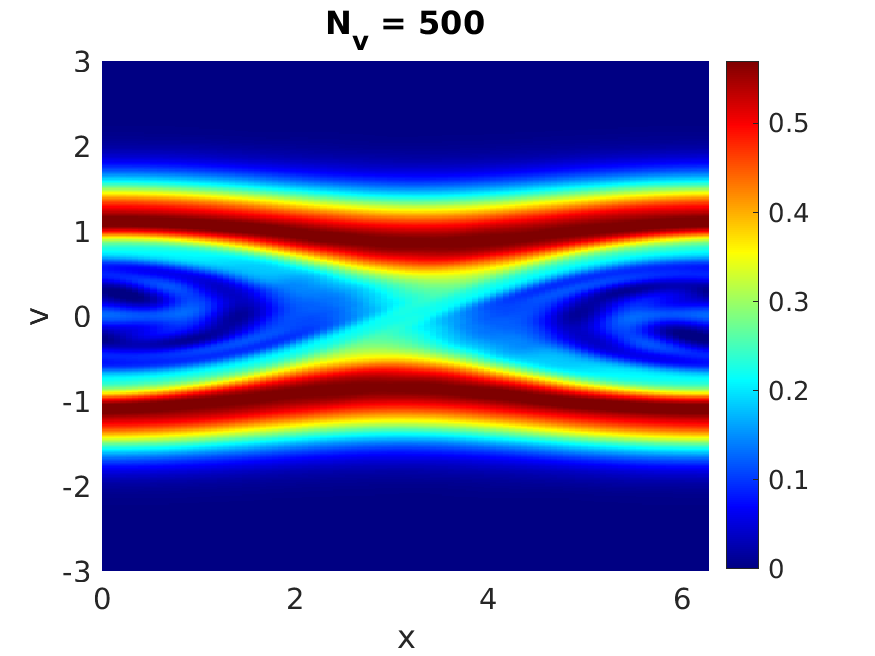}
  \includegraphics[width=0.32\textwidth]{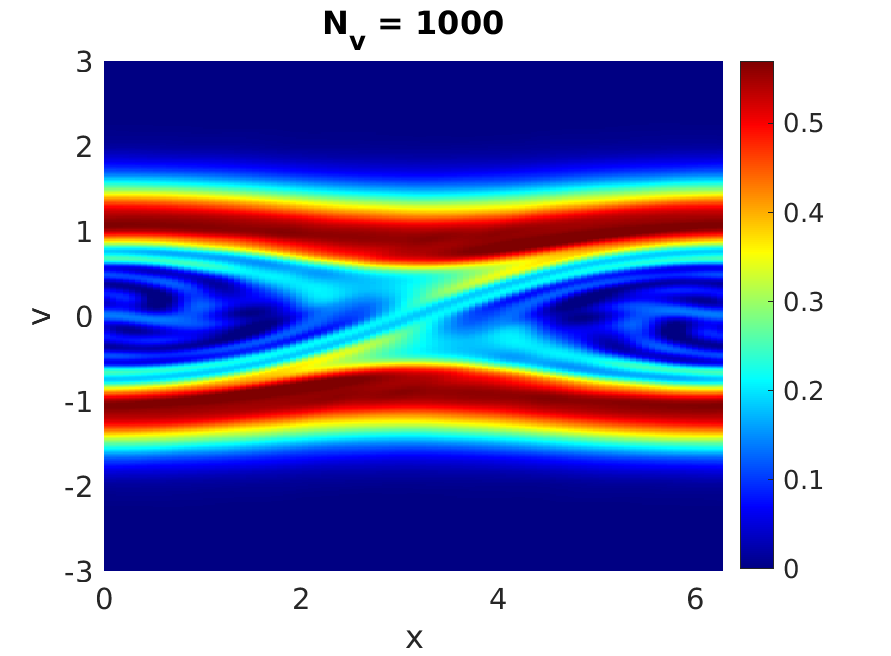}
  \includegraphics[width=0.32\textwidth]{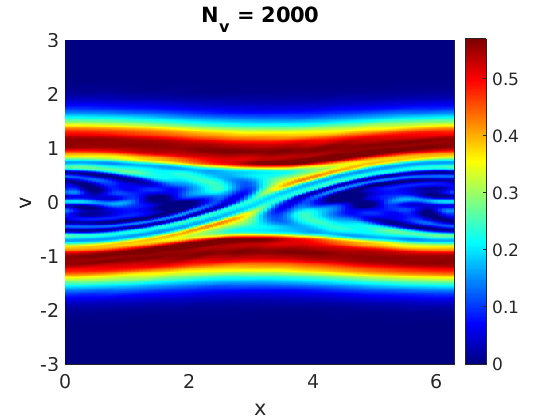}
  \caption{Two-stream instability. Distribution function at time $t=100$ obtained with the adaptive scheme for different number of Hermite modes $N_v=500$ (left), $N_v=1000$ (center), and $N_v=2000$ (right).}
  \label{fig:TSIconv}
\end{figure}

\begin{figure}[H]
  \centering
  \includegraphics[width=0.45\textwidth]{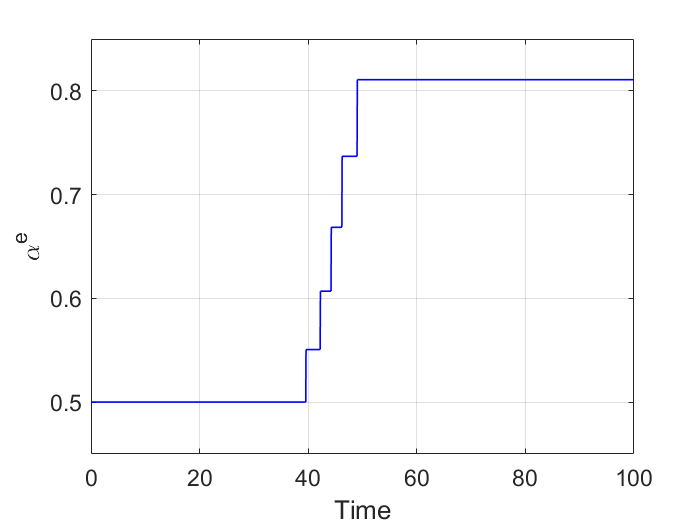}
  \includegraphics[width=0.45\textwidth]{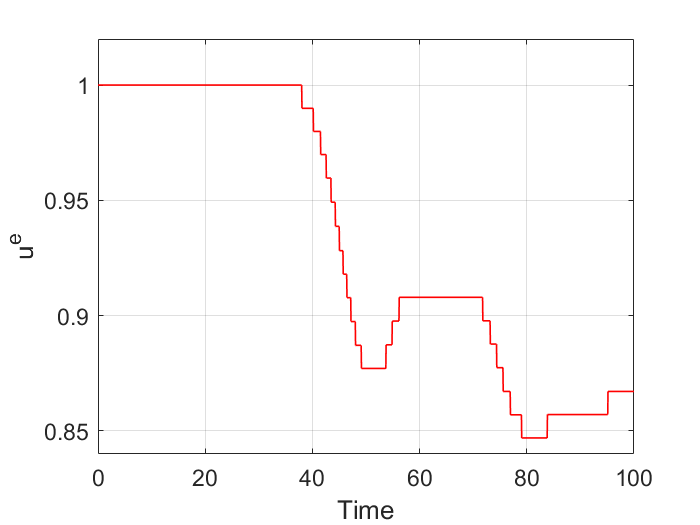}
  \caption{Two-stream instability. Evolution of the physics-based Hermite coefficients. Hermite scaling for electrons $\alpha^e$ (left) and
    Hermite shift for electrons $u^e$ (right).}
  \label{fig:TwoSAeUeColl}
\end{figure}

\begin{figure}[H]
  \centering
  \includegraphics[width=0.45\textwidth]{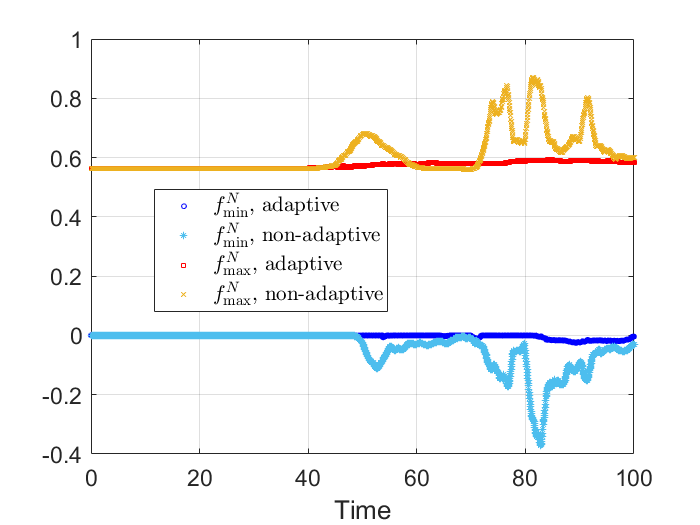}
  \includegraphics[width=0.45\textwidth]{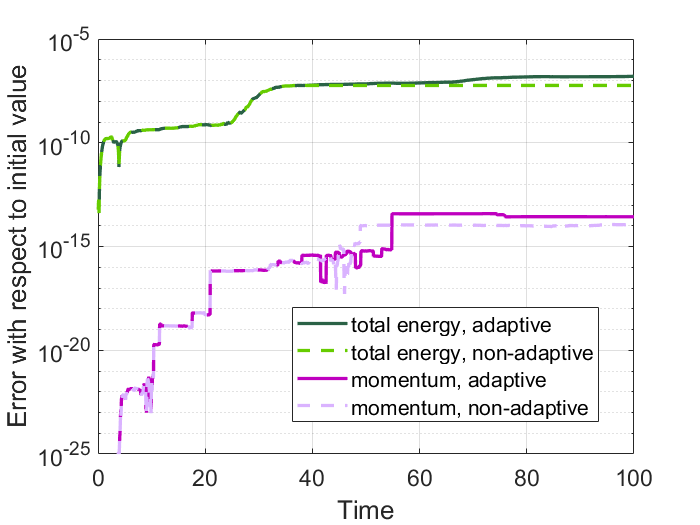}
\caption{\small On the left, evolution of the maximum and minimum values of the distribution function for the electron species in the bounded domain $\Omega=[0,L]\times[-3,3]$ ($400$ points are used to reconstruct the distribution function in the velocity domain).
On the right, numerical verification of the conservation of momentum and energy.
The line corresponding to the mass is not shown since the error is exactly zero at all times.}
\label{fig:TSIfmaxminTime}
\end{figure}

Interestingly, the differences between the two algorithms are not significant when
looking at a macroscopic quantity such as the electric field.
This is likely due to the fact that only one mode is linearly unstable for the parameters considered and this mode, whose linear behavior is captured well by both algorithms, dominates the evolution of the electric field.

\subsubsection{Comparison with different number of Hermite modes and $\nu$ fixed}
Let us now consider the temporal interval
$T=[0,50]$, and the spectral discretization \eqref{eq:VFhL2w} with
$2N_x+1=101$ Fourier modes. We let the number of Hermite modes vary.
The collisional coefficient of the operator \eqref{eq:CollDC} is set
to $\nu=5$.

Figure~\ref{fig:TwoSNvColl} shows the distribution function at the end of the simulations and the time history of the electric field versus the number of Hermite modes, for both adaptive (first two columns) and non-adaptive algorithms (last two columns). One can notice that the adaptive algorithm is not accurate when $N_v=10$, while its solutions are visually indistinguishable for all other values of $N_v$ in the range 50 to 250. Conversely, the non-adaptive algorithm shows oscillations of the distribution function throughout the range of $N_v$ considered. These considerations are confirmed in Table \ref{tab:TSIfmaxminTest1} showing the mininum and maximum values of the distribution function versus number of Hermite modes.  For the parameters considered here, the maximum value of the electron distribution function is 0.56 [Eq. (\ref{ic})].
For the adaptive algorithm, the minimum of the distribution function on the domain considered, although taking negative values, rapidly converges to zero as $N_v$ increases (polynomial rate of convergence of around $5$ for $N_v> 50$).
The maximum value of the distribution function is quite well conserved for all cases except $N_v=10$, with a relative error of only $2\%$ for $N_v=50$ and further decreasing for larger values of $N_v$. The non-adaptive algorithm, on the other hand, shows a strong lack of positivity for the lower values of $N_v$ and a slow decay to zero as $N_v$ increases.
For $N_v=250$, however, the relative error on the maximum values of the distribution function remains high, $\sim 21\%$. Generally speaking, the trends of the maximum value of the distribution function in Table \ref{tab:TSIfmaxminTest1} show a slow convergence of the non-adaptive algorithm with the number of Hermite modes $N_v$.
Finally, similar to the results shown in Sect. \ref{case0}, the behavior of the electric field does not seem to be much affected by the oscillations of the distribution function and the two algorithms produce very similar results. 

We also observe that the Hermite scaling $\alpha^e$ evolves in basically the same way for any value of the number $N_v$ of spectral modes. This suggests that the behavior of the Hermite scaling parameter is dominated by the macroscopic behavior of the
system, at least in this numerical test.

\begingroup
\tabcolsep = 10pt
\def\arraystretch{1.1}
\begin{table}[H]
  \centering
  \begin{tabular}{c | c c | c c}
    & \multicolumn{2}{c|}{$f^N_{\min}(t=50)$} & \multicolumn{2}{c}{$f^N_{\max}(t=50)$}  \\
    &\; non-adaptive\; &\; adaptive\; & \;non-adaptive\; & adaptive\\
    \hline
    $N_v=10$ \;& $-0.8767$ & $-0.0056$ & $0.8837$ & $0.4709$\\
    $N_v=50$ \;& $-0.4273$ & $-0.0776$ & $0.6742$ & $0.5836$ \\
    $N_v=100$ \;&$-0.1383$  & $-0.0275$  & $0.7535$ & $0.5799$\\
    $N_v=125$ \;& $-0.0780$ & $-0.0072$ & $0.7246$ & $0.5805$ \\
    $N_v=250$ \;& $-0.0227$ & $-3.28\cdot 10^{-4}$ & $0.6982$ & $0.5733$ \\
  \end{tabular}
  \caption{\small Maximum and minimum values of the distribution function for the electron species in the bounded domain $\Omega=[0,L]\times[-3,3]$ for different numbers of Hermite modes.}
  \label{tab:TSIfmaxminTest1}
\end{table}
\endgroup

\begin{figure}[H]
  \centering
  \includegraphics[width=0.24\textwidth]{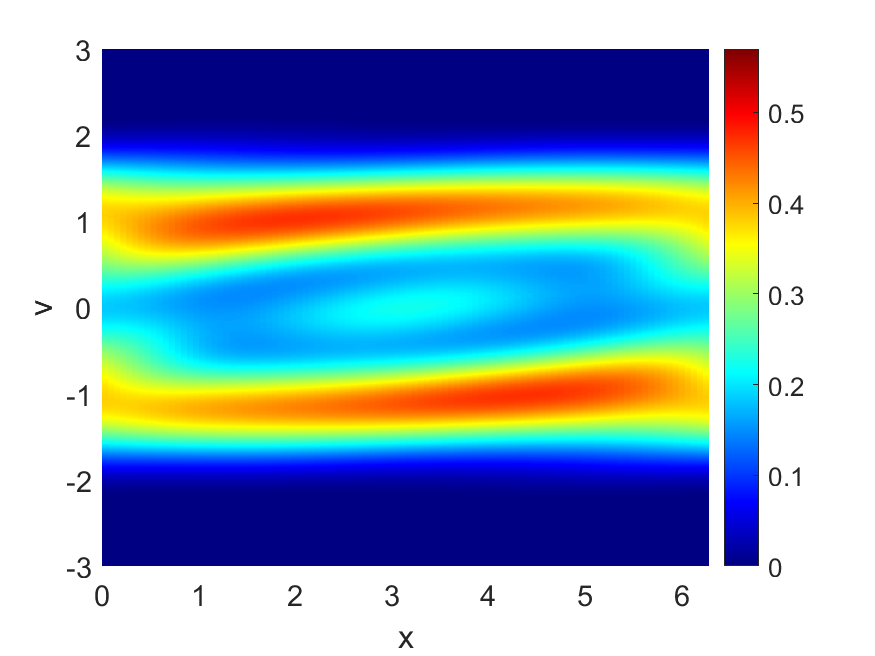}
  \includegraphics[width=0.24\textwidth]{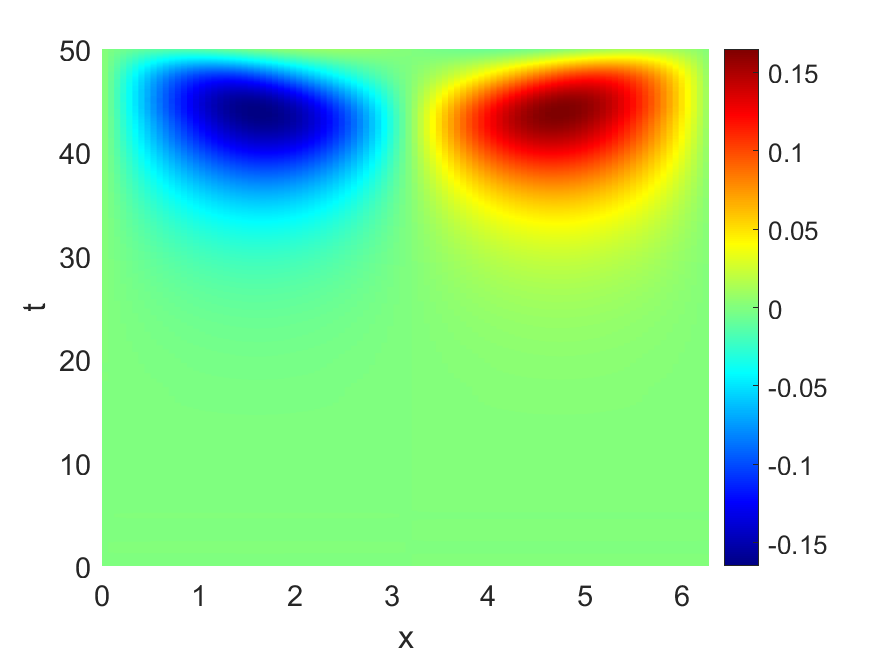}
  \includegraphics[width=0.24\textwidth]{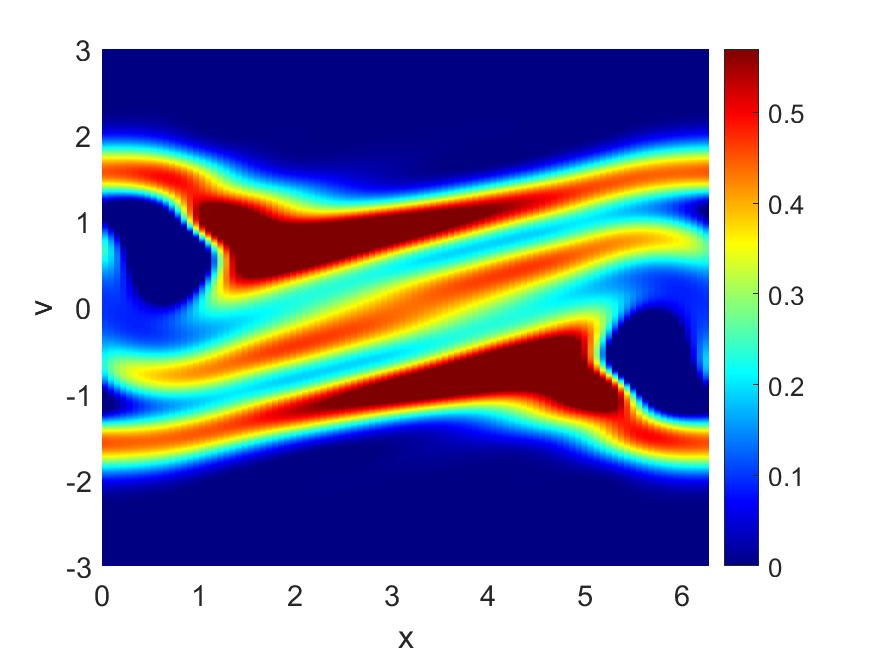}
  \includegraphics[width=0.24\textwidth]{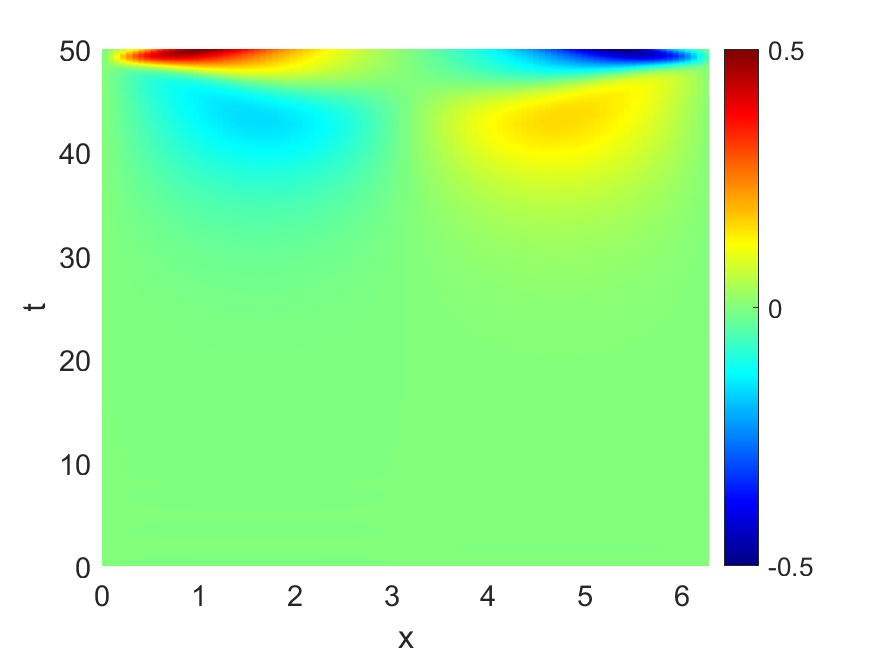}
  \includegraphics[width=0.24\textwidth]{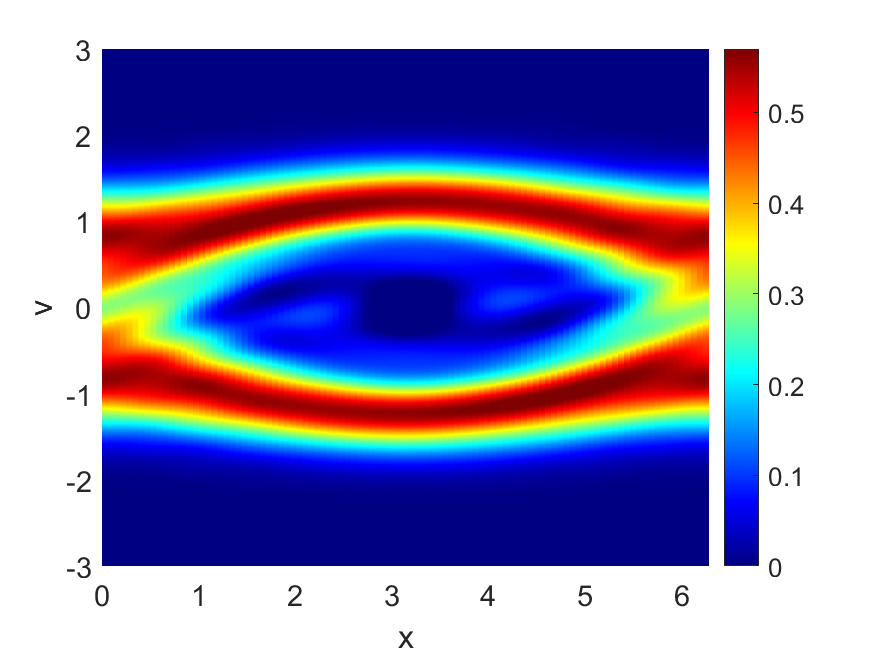}
  \includegraphics[width=0.24\textwidth]{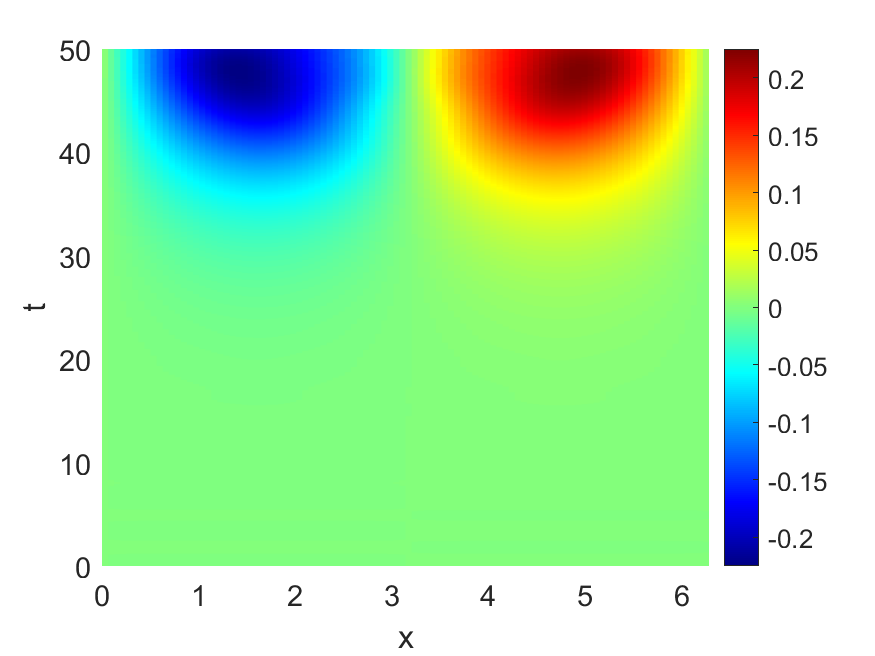}
  \includegraphics[width=0.24\textwidth]{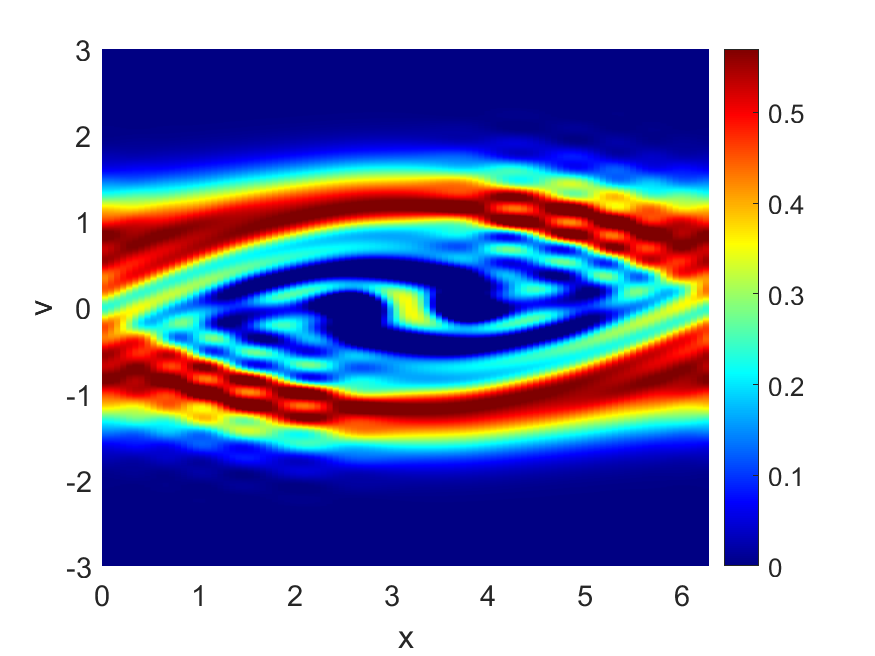}
  \includegraphics[width=0.24\textwidth]{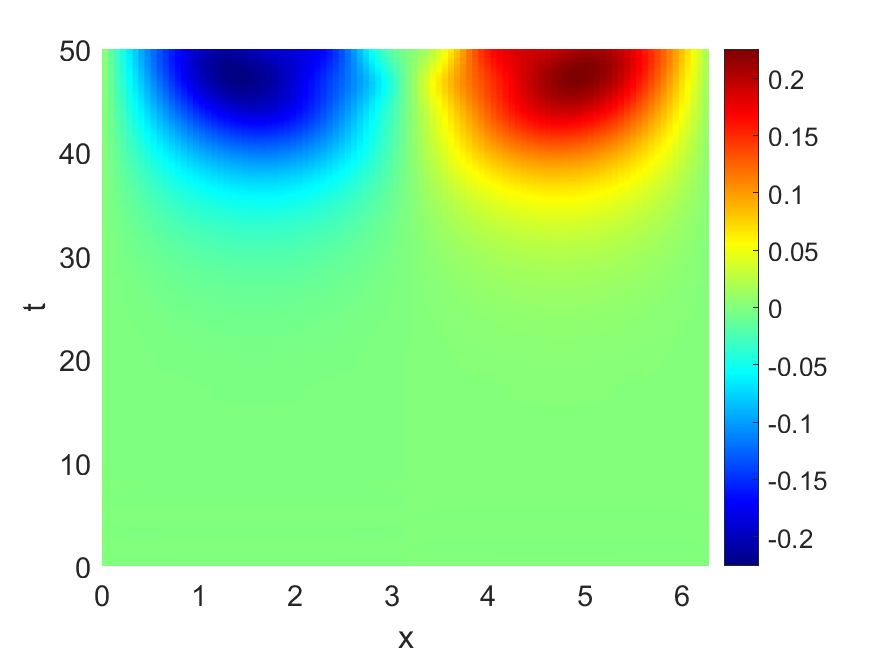}
  \includegraphics[width=0.24\textwidth]{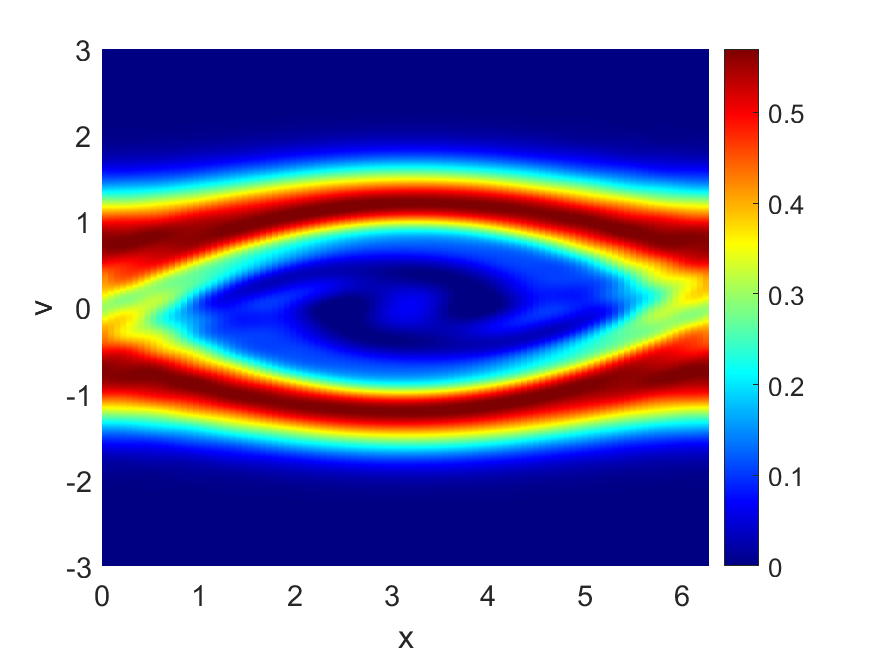}
  \includegraphics[width=0.24\textwidth]{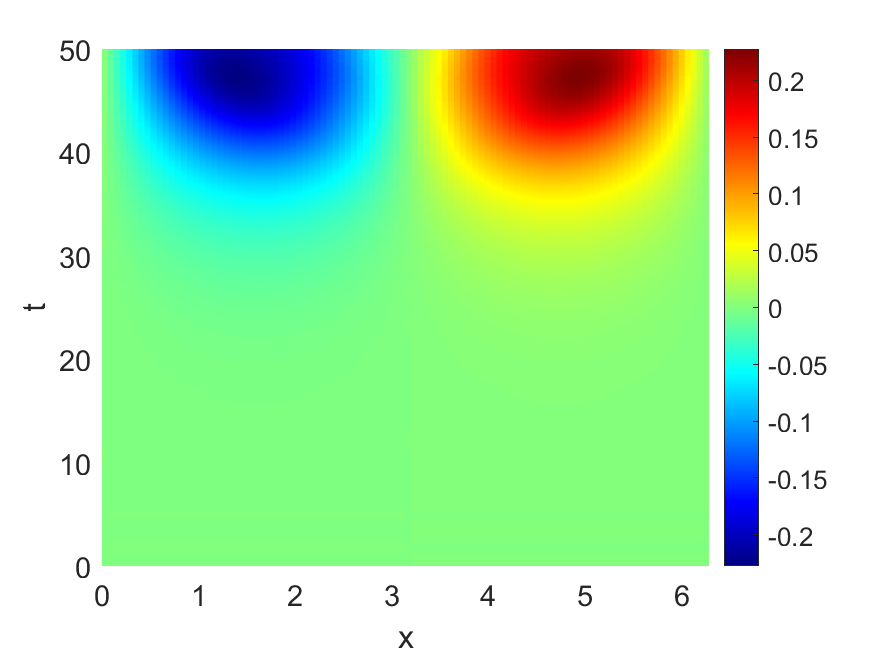}
  \includegraphics[width=0.24\textwidth]{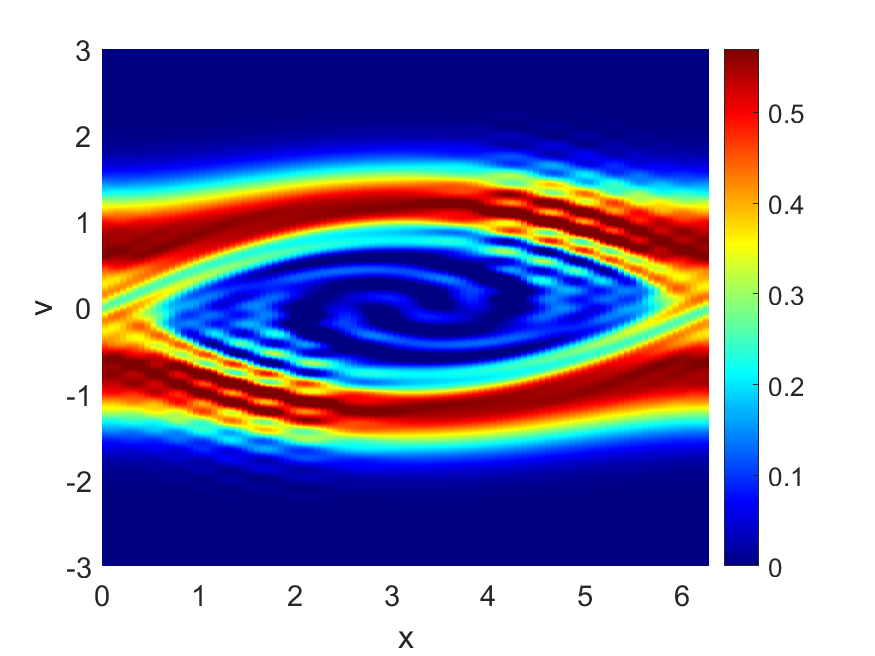}
  \includegraphics[width=0.24\textwidth]{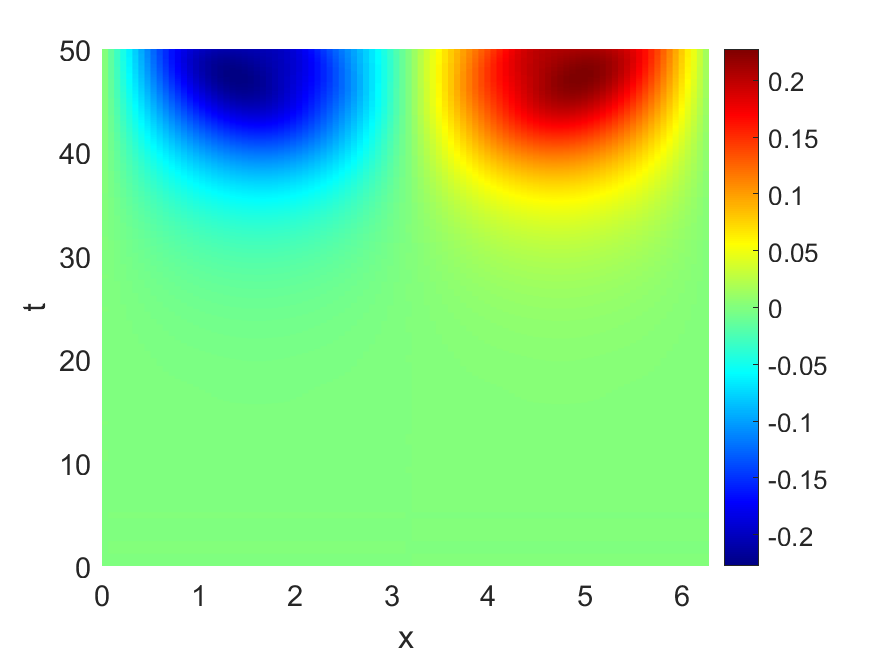}
  \includegraphics[width=0.24\textwidth]{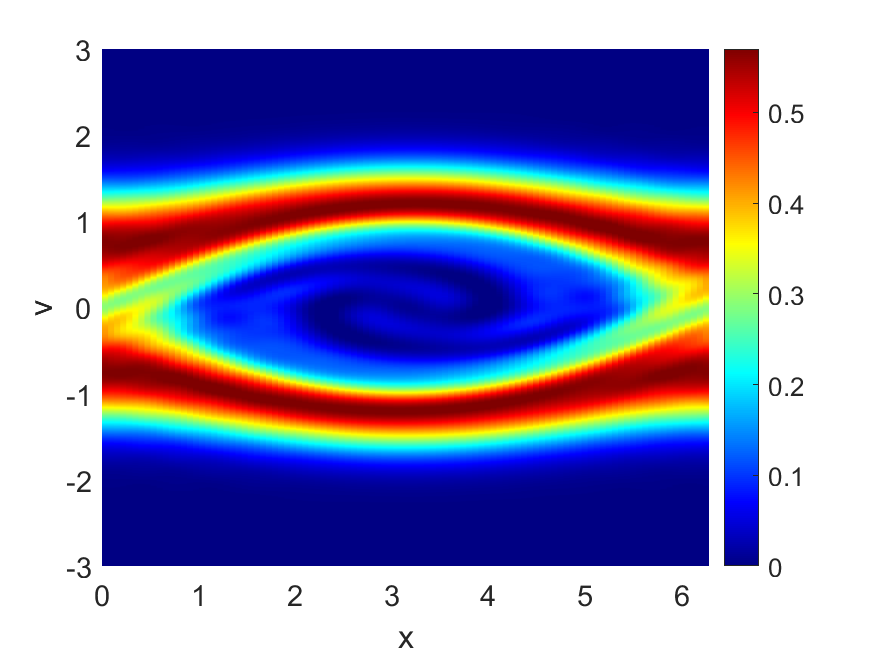}
  \includegraphics[width=0.24\textwidth]{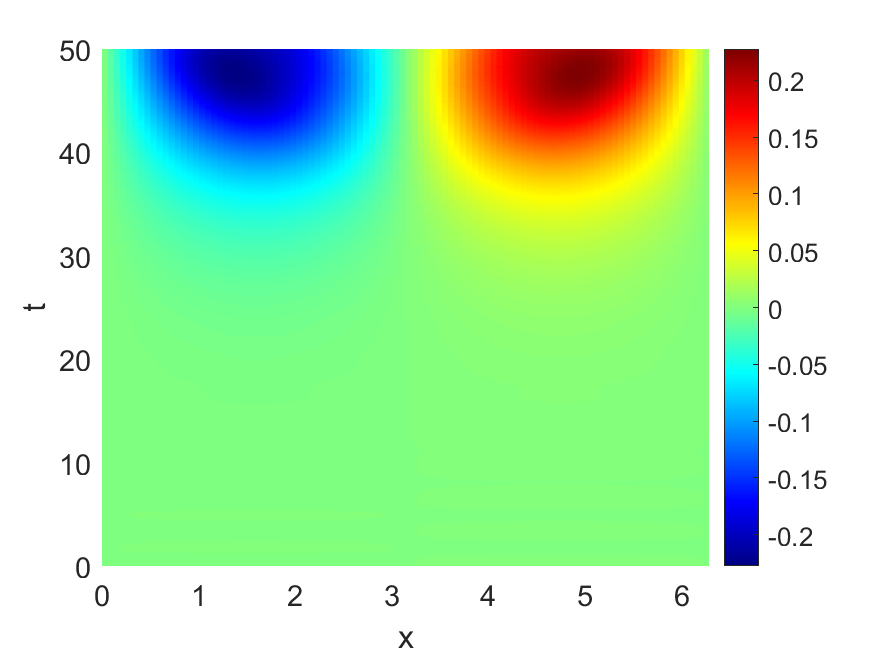}
  \includegraphics[width=0.24\textwidth]{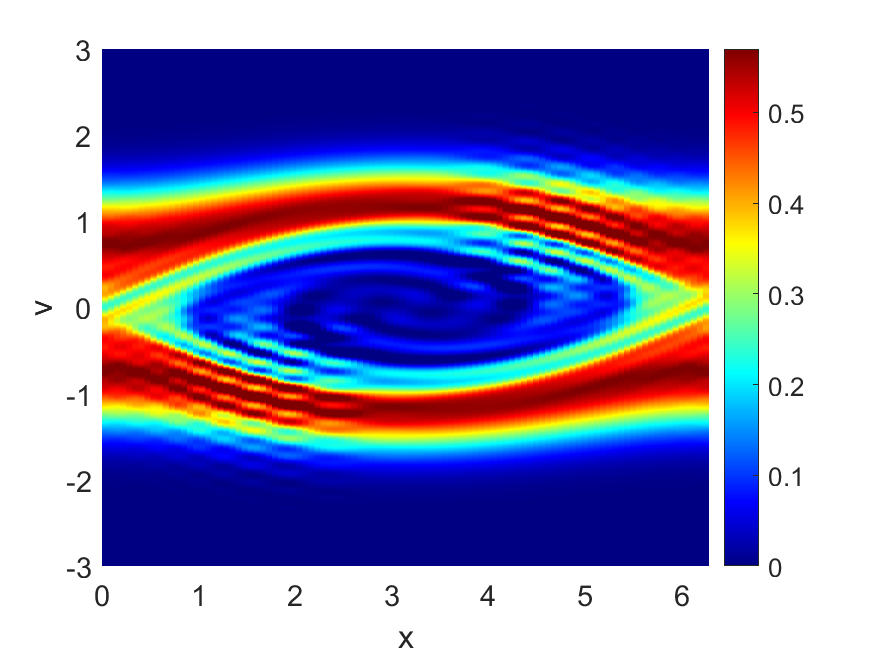}
  \includegraphics[width=0.24\textwidth]{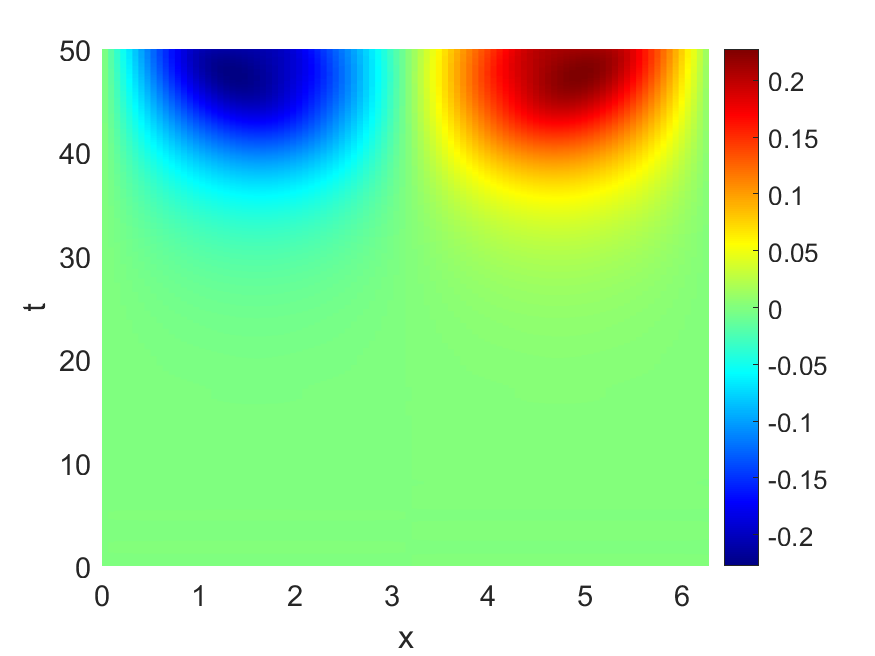}
  \includegraphics[width=0.24\textwidth]{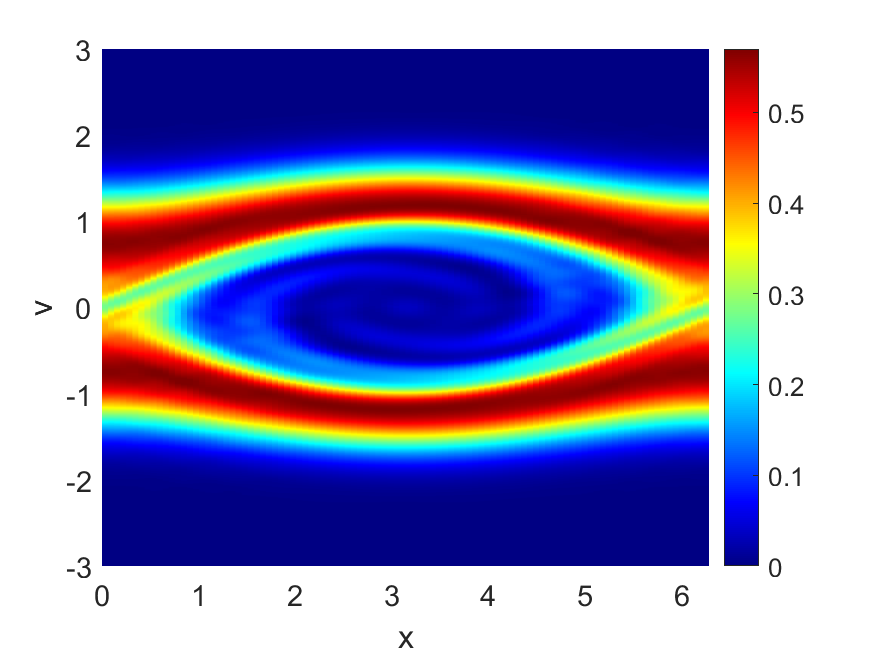}
  \includegraphics[width=0.24\textwidth]{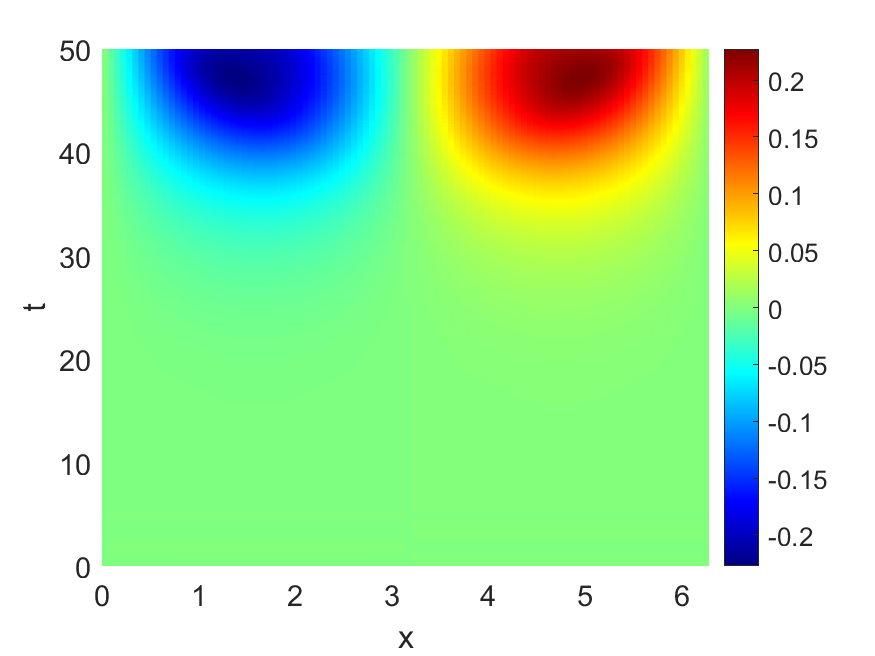}
  \includegraphics[width=0.24\textwidth]{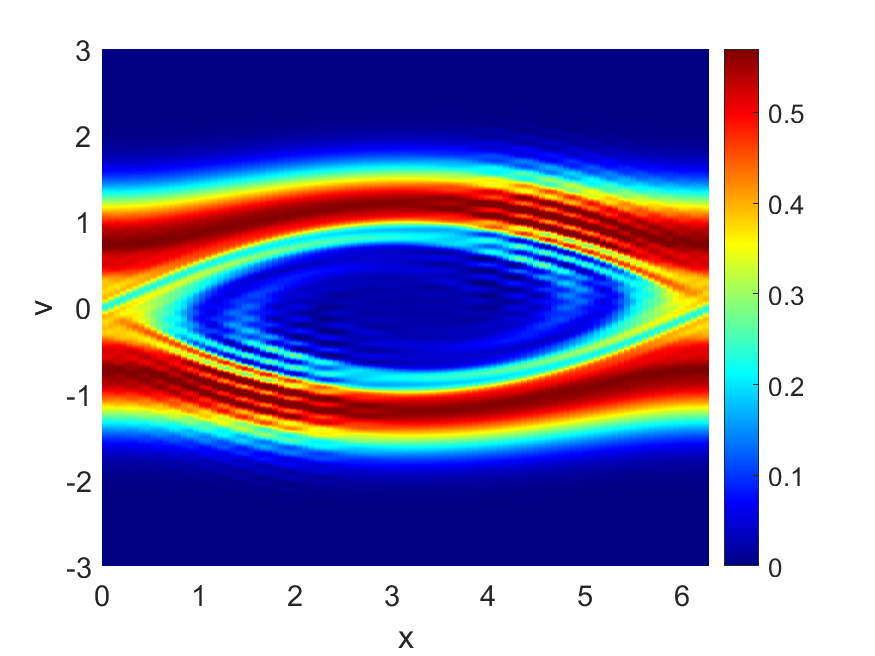}
  \includegraphics[width=0.24\textwidth]{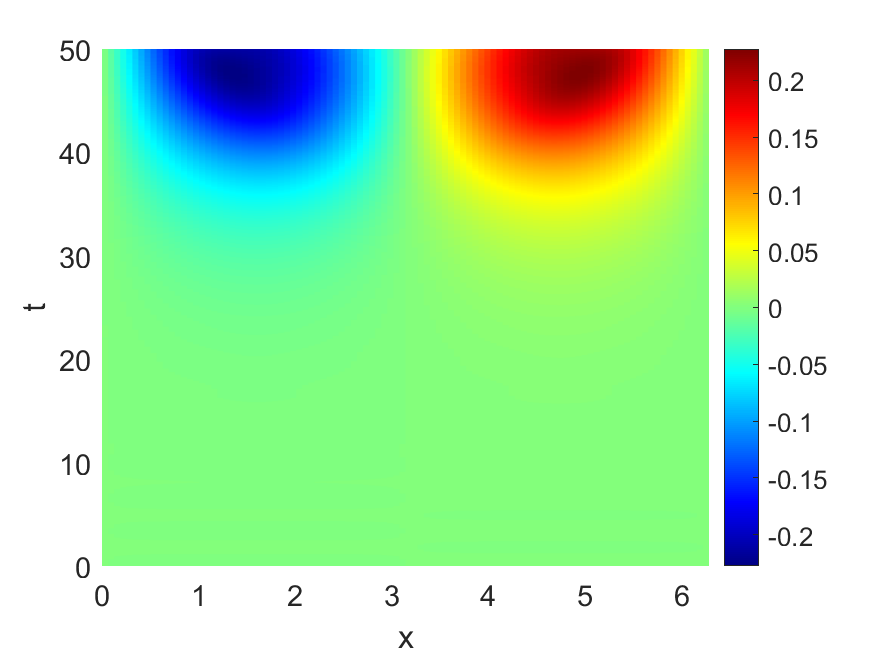}
  \caption{Two-stream instability. First and third columns: distribution function at time $t=50$ obtained with the adaptive (first column) and non-adaptive (third column) schemes. Second and forth columns: electric field in space and time obtained with the adaptive (second column) and non-adaptive (forth column) schemes.
    The number of Hermite modes, from top to bottom, is $N_v=10,50,100,125,250$.
  The collisional coefficient is $\nu=5$ and the number of Fourier modes is $N_x=50$.
    }
  \label{fig:TwoSNvColl}
\end{figure}

In Figure~\ref{fig:TSIerr} we compare the adaptive and non-adaptive algorithms in terms of relative error versus a reference solution computed separately, as done for instance in \cite{CDBM16}. The reference solution is obtained with the non-adaptive algorithm with $N_x=50$, $N_v=1000$, $\Delta t=0.01$, and $\nu=10$.
Figure~\ref{fig:TSIerr} (left) shows the $L_2$ norm of the relative error as a function of time and for different values of the number of Hermite modes $N_v$. The solid (dashed) lines are for the non-adaptive (adaptive) method. A couple of considerations can be made. First, as the two-stream instability grows and structures develop in the electron distribution function, the error grows in time in both approaches. Second, the error is essentially the same in the early part of the simulation for both approaches. This corresponds to the linear phase of the instability and reflects the fact that at this stage the $u$ and $\alpha$ parameters have not changed considerably (cf. Fig. \ref{fig:TwoSAeUeColl}). Third, in the final part of the simulation the error in the adaptive method is lower than for the non-adaptive method, reflecting that the change of basis is beneficial in improving the accuracy of the solution. This is consistent with the considerations made above, for instance for Fig.~\ref{fig:TwoSNvColl}. The right panel of Fig.~\ref{fig:TSIerr} shows the $L_2$ norm of the relative error at $t=50$ as a function of the number of Hermite modes $N_v$, for both adaptive and non-adaptive methods. While the error decreases with increasing velocity space resolution for both methods, it is evident that the adaptive method is more accurate. It also appears to have a faster asymptotic convergence rate than the non-adaptive method.

\begin{figure}[H]
  \centering
  \includegraphics[width=0.475\textwidth]{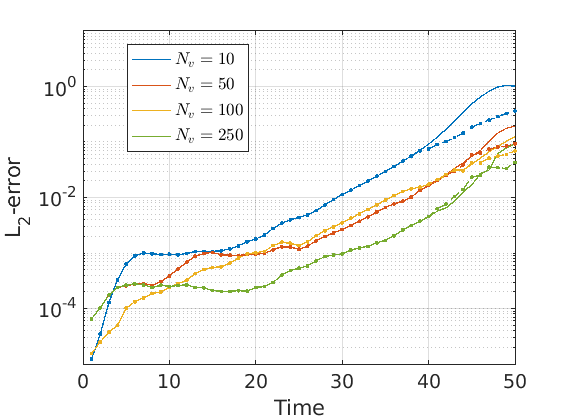}
  \includegraphics[width=0.475\textwidth]{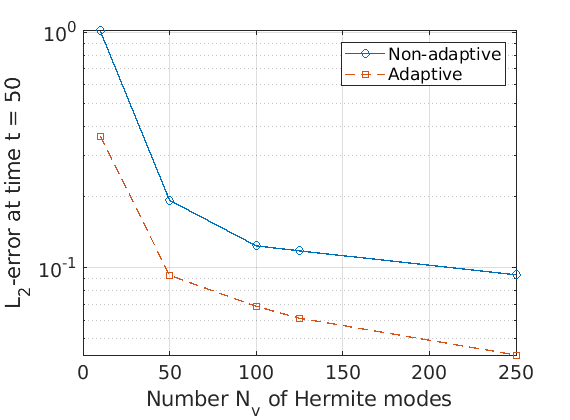}
  \caption{
    Two-stream instability. On the left, evolution of the $L_2$-error between the numerical solution and the reference solution for different numbers of Hermite modes. On the right, $L_2$-error at time $t=50$ vs. number of Hermite modes.
    }
  \label{fig:TSIerr}
\end{figure}

\section{Concluding remarks and outlook}
\label{sec:concluding:remarks}
In this work we have presented an adaptive spectral method for the numerical solution of the Vlasov--Poisson equations.
The discretization is based
on asymmetrically weighted Hermite functions in velocity,
and on Fourier basis for the spatial coordinate.
Since a poor choice of the basis functions might require spectral expansions
with a large number of modes to achieve even moderate accuracy,
we have introduced a scaling ($\alpha$) and shifting ($u$) of the velocity
variable that are adaptively updated in time.
The key aspects of the adaptive algorithm are: $(i)$ the mapping
between the approximation spaces that are associated with different
values of the Hermite parameters $\alpha$ and $u$; $(ii)$ the physics-based adaptivity criterion to
select and update $\alpha$ and $u$.
The space mapping has been constructed in such a way that adapting the Hermite parameters preserves the total mass, momentum and energy, even in the fully discrete problem, provided that a suitable temporal integrator, e.g. implicit midpoint rule, is used.
The strategy to adapt the Hermite parameters is based on
physics considerations with the aim of ``correcting'' the Hermite parameters according to the evolving dynamics.
Numerical experiments using both the adaptive and non-adaptive
algorithms prove numerically the effectiveness of the new adaptive approach in terms of providing more accuracy and stability of the solution for a fixed level of resolution as well as requiring less numerical viscosity to reduce filamentation effects. The same methodology can be easily extended to the general Vlasov-Maxwell equations.

While spectral methods have emerged recently as potentially attractive methods to study multiscale, micro/macro (fluid-kinetic) coupling problems for plasma physics applications, the adaptivity of the spectral expansion to capture locally the complexity of plasma dynamics and minimize the number of expansion terms needed to achieve sufficient accuracy remains a critical and yet essentially open problem. To the best of our knowledge, the only other approach that involves temporal adaptivity of the spectral expansion for plasma physics applications is the NR$xx$ method of \cite{NRxx} and its recent extension~\cite{NRxx2}. However, there are substantial differences between our approach and the NR$xx$ method, as highlighted in the introduction.

The only problem-dependent part of the proposed method is the adaptivity criterion: different criteria can be envisioned based on the properties
of the dynamics. These criteria could include physics-based criteria like the one proposed in this paper, but also other more mathematical criteria based for instance on the minimization of the error of the numerical scheme via some form of \textit{a posteriori} error estimation should be developed. Note that the flexibility of choosing different adaptivity criteria in a way that is completely decoupled from the underlying model equations is another distinct aspect of the proposed method relative to NR$xx$, where the specific choice of the adaptivity criteria is directly embedded in the method and changing the adaptivity criteria corresponds to a different method that actually solves different equations.
Additionally, in the presence of non-Maxwellian behavior and highly oscillatory solutions, an adaptivity based on average quantities might not be effective. In such cases, 
extensions of the proposed method to include both spatial and temporal adaptivity of the expansion basis will also need to be developed and are an important research direction for the future.

\section*{Acknowledgments}
The authors acknowledge fruitful discussions with Oleksandr Koshkarov, Gianmarco Manzini and Vadim Roytershteyn. The authors also acknowledge discussions with Juris Vencels in the initial phase of the work.
This work was supported by the Laboratory Directed
Research and Development Program
of Los Alamos National Laboratory under projects number 20170207ER and 20220104DR.
Los Alamos National Laboratory is operated by Triad National Security,
LLC, for the National Nuclear Security Administration of
U.S. Department of Energy (Contract No. 89233218CNA000001).
Computational resources for the simulations were provided by the Los Alamos National Laboratory Institutional Computing Program.


\bibliographystyle{plain}
\bibliography{biblio} 

\appendix
\section{Spatial discretization}\label{sec:appA}
For the spatial discretization of the Vlasov--Poisson problem
\eqref{eq:VlasovPoisson}, we rely on the numerical scheme proposed in \cite{GLD15} by using spectral methods.
Observe that the adaptive method proposed in this work can accommodate
other spatial and temporal discretizations.
We summarize the main ideas of the spatial approximation in this
appendix.

Let $\eta_k(x):=e^{\frac{2\pi ik}{L}x}$ denote the Fourier functions with wavenumber $k$.
Since $\Omega_x$ is bounded, the set of Fourier functions
$\{\eta_k(x)\}_{k\in\mathbb{Z}}$ form an orthonormal basis of
$L^2(\Omega_x)$.
The Fourier functions satisfy the orthogonality
relations
\begin{equation}\label{eq:Forth}
  \dfrac{1}{L}\int_{\Omega_x}\eta_k(x)\,\eta_h(x)\,\der x=\delta_{k+h,0},
\end{equation}
for all $h,k\in\mathbb{Z}$.

Let $N:=(N_v,N_x)\in\mathbb{N}\times\mathbb{N}$ and let us consider the approximation space $W^N:=\SPAN{\{\eta_k(x)\}_{k\in\Lambda_{N_x}}}$,
where
$\Lambda_{N_x}:=\{k\in\mathbb{Z}:\,-N_x\leq k\leq N_x\}$.
The discretized problem in each time interval $T_j$, $j\in\Lambda_{N_t}$, reads:
given the initial
  conditions $(f_{j}^N,E_{j}^N)\in (V^N_{j}\times W^N)\times W^N$, find $(f_{j+1}^N,E_{j+1}^N)\in (V^N_{j}\times W^N)\times W^N$
  such that
\begin{equation}\label{eq:HermFour}
\left\{
\begin{aligned}
    & \big(f_{j+1}^N-f_{j}^N,h^N\big)_{\Omega} +
    \Delta t_j \big(v\partial_x f_{j+1/2}^N + \dfrac{q}{m} E^N_{j+1/2}\,\partial_v f_{j+1/2}^N,h^N\big)_{\Omega} = 0,\\
    & \big(\partial_x E_{j+1}^N,H^N\big)_{\Omega_x} = \sum_s q^s(f_{j+1}^{s,N},H^N)_{\Omega},
  \end{aligned}\right.
\end{equation}
for all $h^N\in \widehat{V}^N_{j}\times W^N$ and $H^N\in W^N$.  
In the interval $T_j$, the approximated functions $f_j^N$ and $E_j^N$
can be represented in their phase space spectral expansion in
$V^N_{j}\times W^N$ and in $W^N$, respectively, as
\begin{equation}\label{eq:fNEexp}
\begin{aligned}
  & f_j^N(x,v) = \sum_{n\in\Lambda_{N_v}}\sum_{k\in\Lambda_{N_x}} \widehat{C}_{n,k}^j\,\psi^{\alpha_j,u_j}_n(v)\,\eta_k(x),\\
  & E_j^N(x) = \sum_{k\in\Lambda_{N_x}} \widehat{E}_k^j\,\eta_k(x),
\end{aligned}
\end{equation}
where the linear functionals $\{\widehat{C}^j_{n,k}: V^N_{j}\times
W^N\rightarrow\mathbb{C}\}_{n,k}$ and
$\{\widehat{E}^j_k:W^N\rightarrow\mathbb{C}\}_k$ are the expansion
coefficients defined as
\begin{equation*}
  \begin{aligned}
    & (f_j^N\in V^N_j\times W^N) \longmapsto \left\{\widehat{C}_{n,k}^j(f_j^N):=
      \int_{\Omega}f_j^N(x,v)\,\psi^{\alpha_j,u_j}_n(v)\,\eta_{-k}(x)\,\sqrt{\pi}\alpha^{-1}_je^{\xi_j^2}\,\der x\,\der v,
    \right\}_{(n,k)\in\Lambda_{N_v}\times\Lambda_{N_x}},\\
    & (E_j^N\in W^N)			  \longmapsto \left\{\widehat{E}_{k}^j(E_j^N):=\int_{\Omega_x}E_j^N(x)\,\eta_{-k}(x)\,\der x \right\}_{k\in\Lambda_{N_x}},
  \end{aligned}
\end{equation*}
and, by construction, they form a basis for the dual spaces of
$V^N_{j}\times W^N$ and of $W^N$, respectively.

By virtue of the orthogonality relations \eqref{eq:HForth} and \eqref{eq:Forth}, the
discrete problem \eqref{eq:HermFour} can be recast as a set of
$N_t(N_v+1)(N_x+1)$ coupled algebraic equations in the unknown
spectral coefficients. The resulting system reads:
  For each $j\in\Lambda_{N_t}$, given the initial
    spectral coefficients
    $\{\widehat{C}_{n,k}^{j}\}_{(n,k)\in\Lambda_{N_v}\times\Lambda_{N_x}}$
    and $\{\widehat{E}_k^j\}_{k\in\Lambda_{N_x}}$, find
    $\{\widehat{C}_{n,k}^{j+1}\}_{(n,k)\in\Lambda_{N_v}\times\Lambda_{N_x}}$
    such that
\begin{equation*}
\left\{ \begin{aligned}
    &\dfrac{\widehat{C}_{n,k}^{j+1}-\widehat{C}_{n,k}^j}{\Delta t_j}
    + \sqrt{\dfrac{n}{2}}\, \dfrac{2\pi}{L}ik\, \alpha_j\, \widehat{C}_{n-1,k}^{j+1/2}
    + \dfrac{2\pi}{L}ik\, u_j\, \widehat{C}_{n,k}^{j+1/2}
    + \sqrt{\dfrac{n+1}{2}}\,\dfrac{2\pi}{L} ik\,\alpha_j\,\widehat{C}_{n+1,k}^{j+1/2}\\[.2em]
    &\qquad -\sqrt{2n}\,\dfrac{q}{m}\,\dfrac{1}{\alpha_j}\sum_{\ell\in\Lambda_{N_x}}\,\widehat{C}_{n-1,\ell}^{j+1/2}\,\widehat{E}_{k-\ell}^{j+1/2}=0, 
    \qquad \forall\,(n,k)\in\Lambda_{N_v}\times\widehat{\Lambda}_{N_x},\\[.5em]
    & \dfrac{2\pi}{L}ik\, \widehat{E}_k^{j+1}-\sum_s q^s\alpha_j^s\,\widehat{C}_{0,k}^{s,j+1}=0,\qquad \forall\,k\in\widehat{\Lambda}_{N_x}\setminus\{0\},\\[.5em]
    & \widehat{E}_0^{j+1}=0,
  \end{aligned}\right.
\end{equation*}
with $\widehat{\Lambda}_{N_x}:=\{k\in\mathbb{Z}:\,0\leq k\leq N_x\}$.

Using the spectral expansion \eqref{eq:fNEexp} of $f^N_j$ in space and velocity,
the Hermite parameters in \eqref{eq:physAU} can be computed as
\begin{equation*}
  u_{j} = u_{j-1} + \dfrac{\alpha_{j-1}}{\sqrt{2}}\,\dfrac{\widehat{C}^{j-1}_{1,0}}{\widehat{C}^{j-1}_{0,0}},\qquad
  \alpha_{j} = \alpha_{j-1}\sqrt{1+\sqrt{2}\,\dfrac{\widehat{C}^{j-1}_{2,0}}{\widehat{C}^{j-1}_{0,0}}-
    \left(\dfrac{\widehat{C}^{j-1}_{1,0}}{\widehat{C}^{j-1}_{0,0}}\right)^2}.
\end{equation*}

\section{Two-stream instability: numerical study of the interplay between adaptivity and artificial collisionality}\label{sec:NumExpColl}
We consider the numerical test described in Section~\ref{sec:TSI} temporal interval $T=[0,50]$ with $\Delta t=0.05$.
The spectral discretization \eqref{eq:VFhL2w} of the Vlasov--Poisson
problem has $2N_x+1=101$ Fourier modes, and $N_v=100$ Hermite modes.
The tolerances for the Hermite parameters are set to
$\alpha_{\mathrm{tol}}=10^{-1}$ and $u_{\mathrm{tol}}=10^{-2}$.

In Figure~\ref{fig:TSIfnu}, we show the
distribution function for the electron species at time $t=50$ for
different values of the collisional coefficient $\nu$.
For any fixed value of $\nu$, the adaptive scheme gives better results in term of stability and accuracy.
On the other hand, as the value of $\nu$ increases, filamentation effects are mitigated in both methods (as expected, see also Section~\ref{sec:Coll}) but the non-adaptive method requires comparatively larger values of $\nu$.

\begin{figure}[H]
  \centering
  \includegraphics[width=0.3\textwidth]{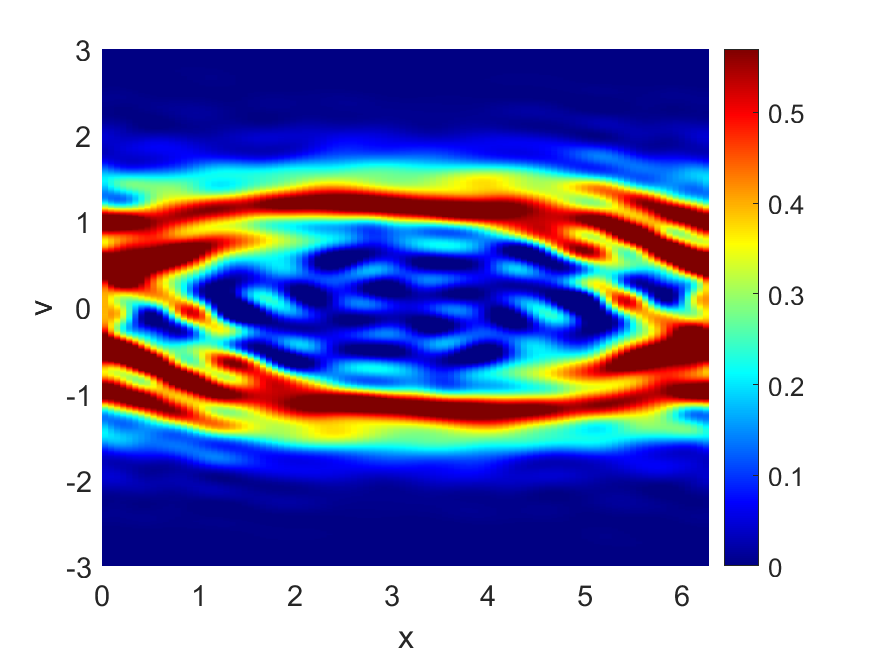}
  \includegraphics[width=0.3\textwidth]{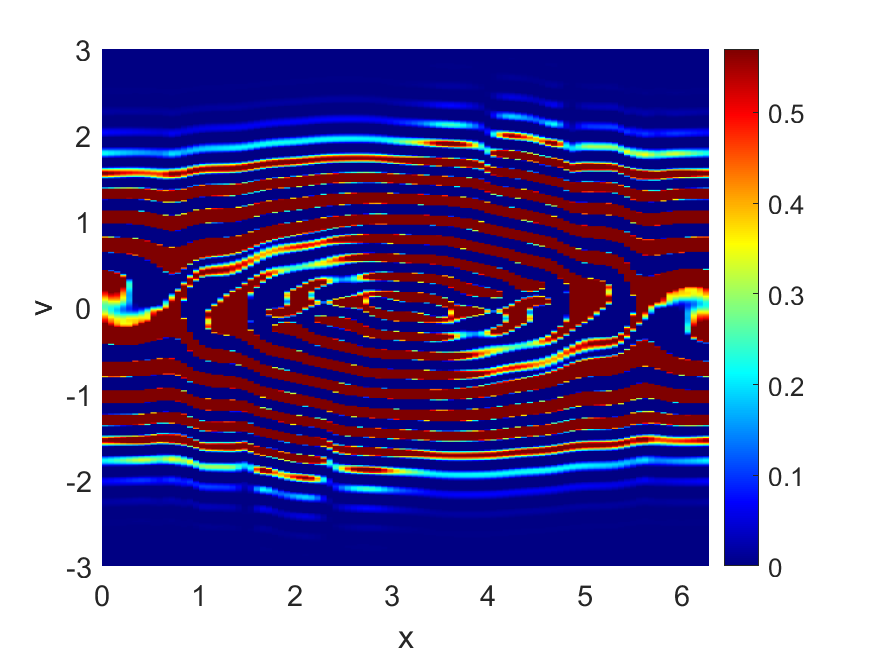}\\
  \includegraphics[width=0.3\textwidth]{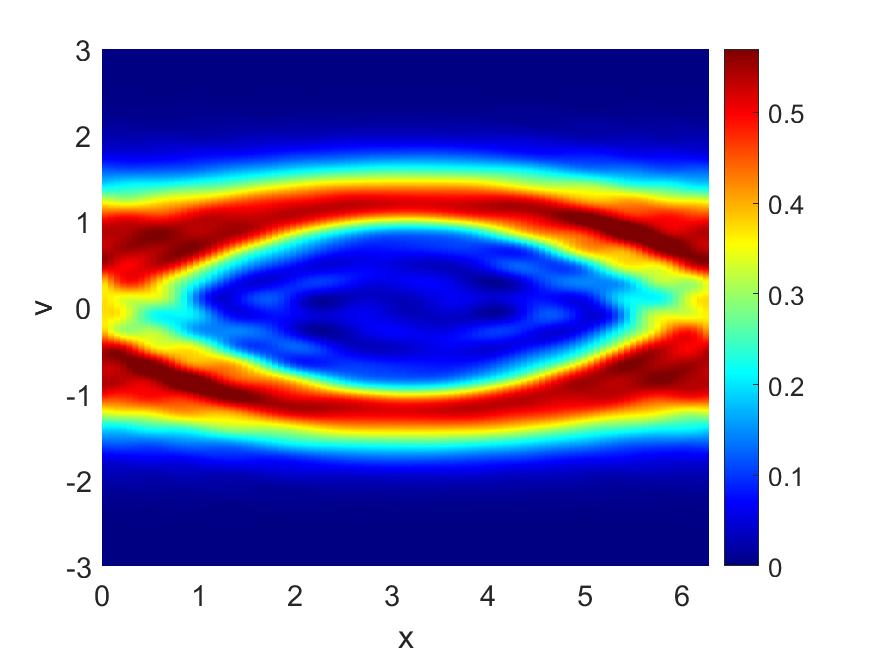}
  \includegraphics[width=0.3\textwidth]{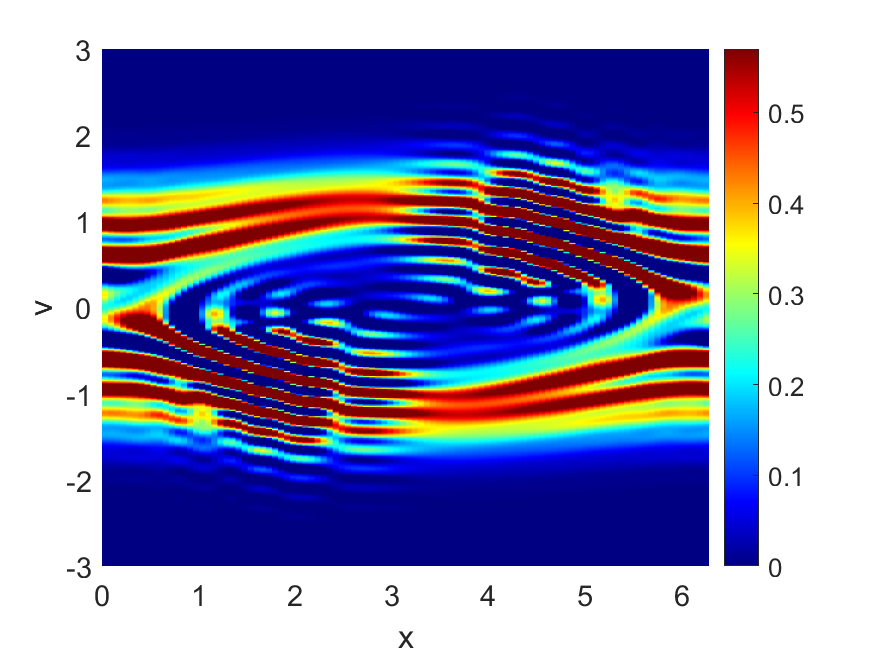}\\
  \includegraphics[width=0.3\textwidth]{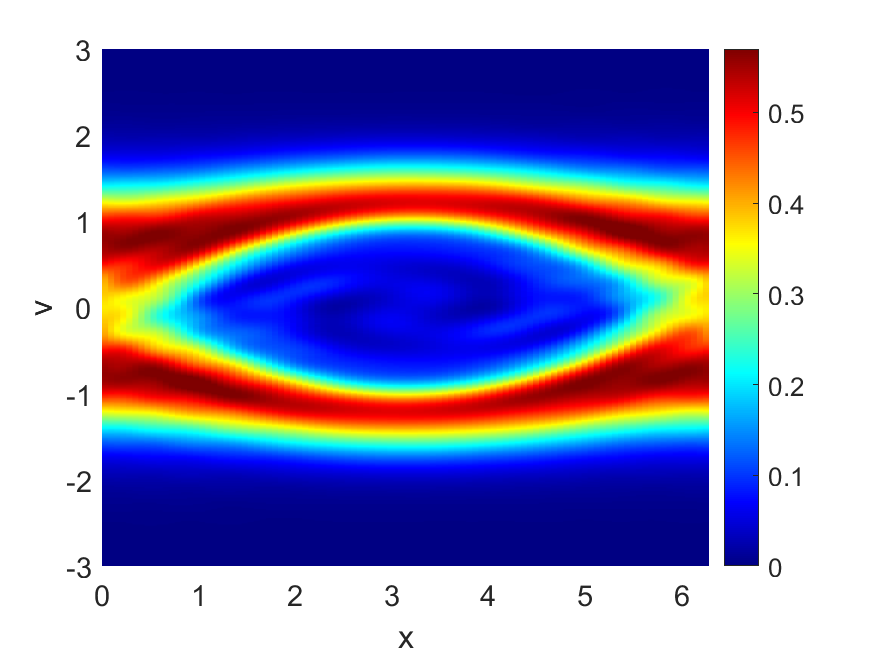}
  \includegraphics[width=0.3\textwidth]{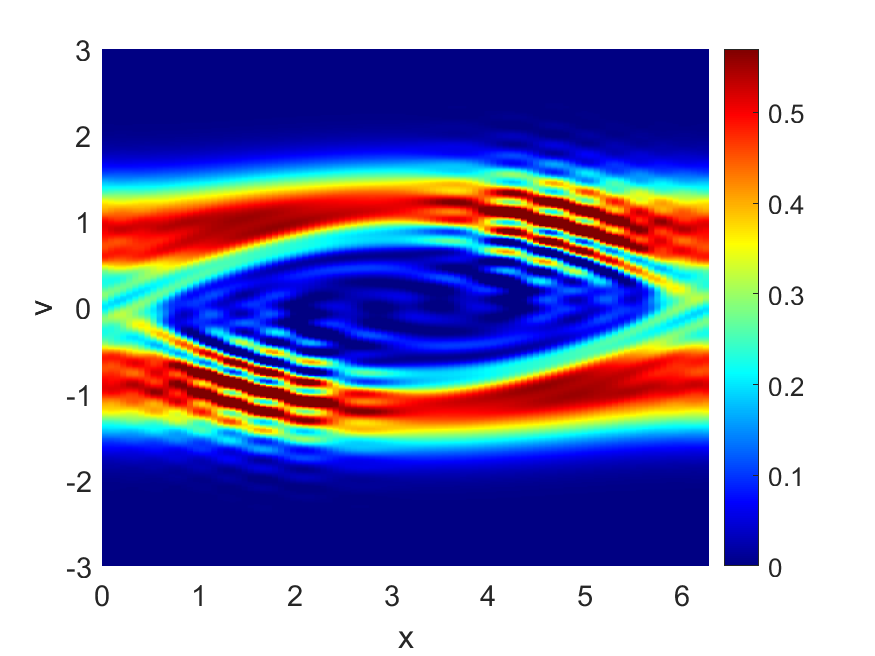}\\
  \includegraphics[width=0.3\textwidth]{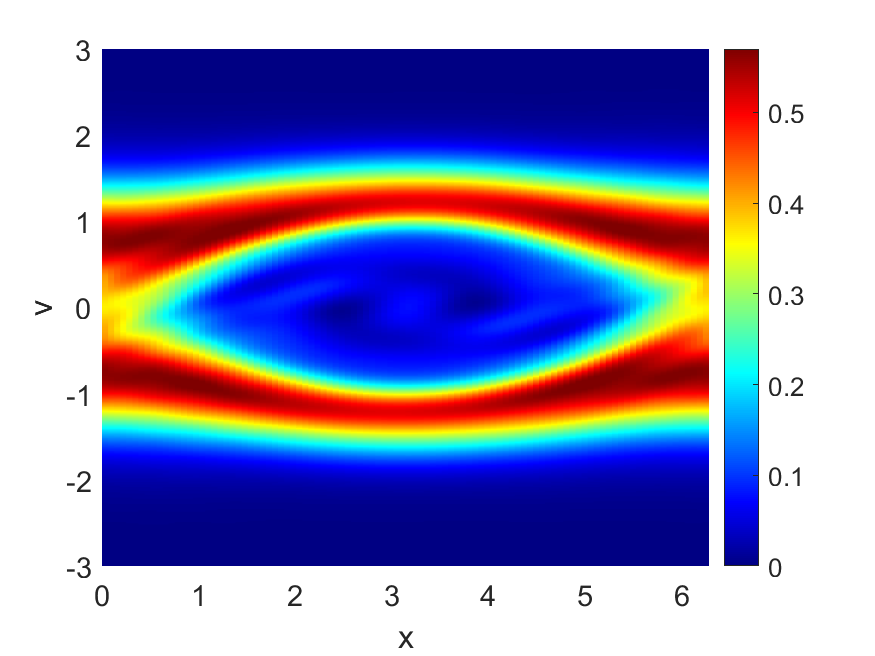}
  \includegraphics[width=0.3\textwidth]{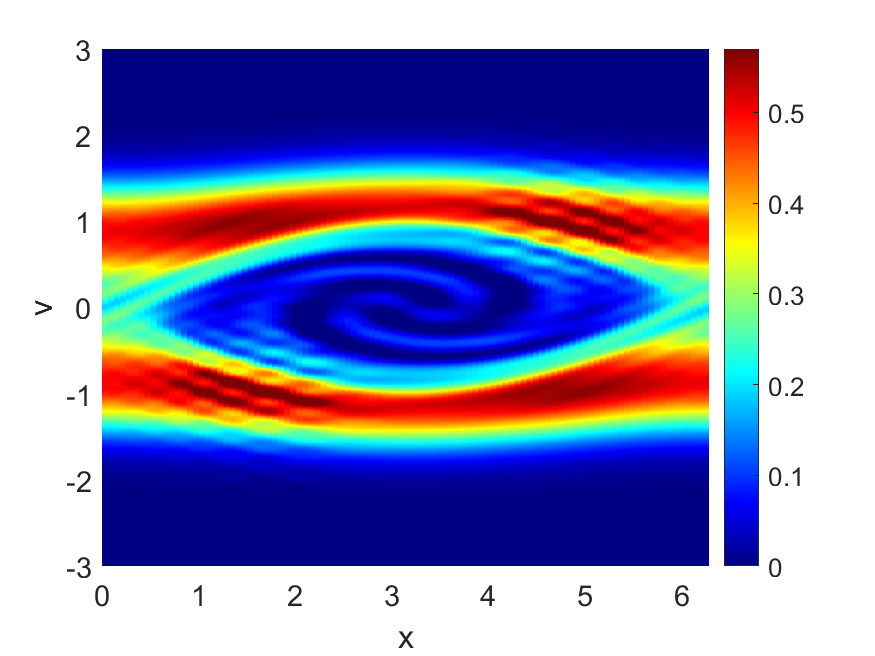}\\
  \includegraphics[width=0.3\textwidth]{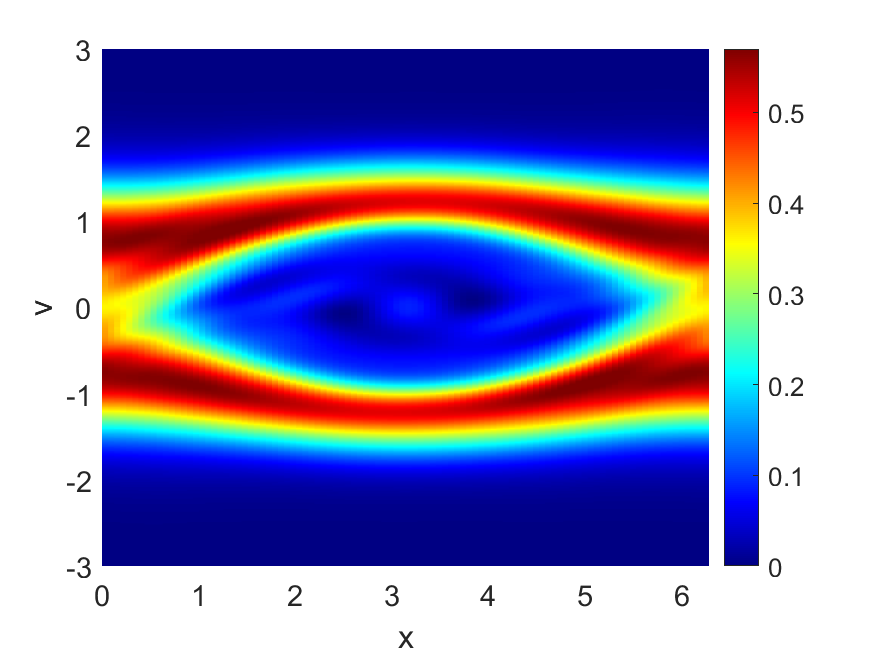}
  \includegraphics[width=0.3\textwidth]{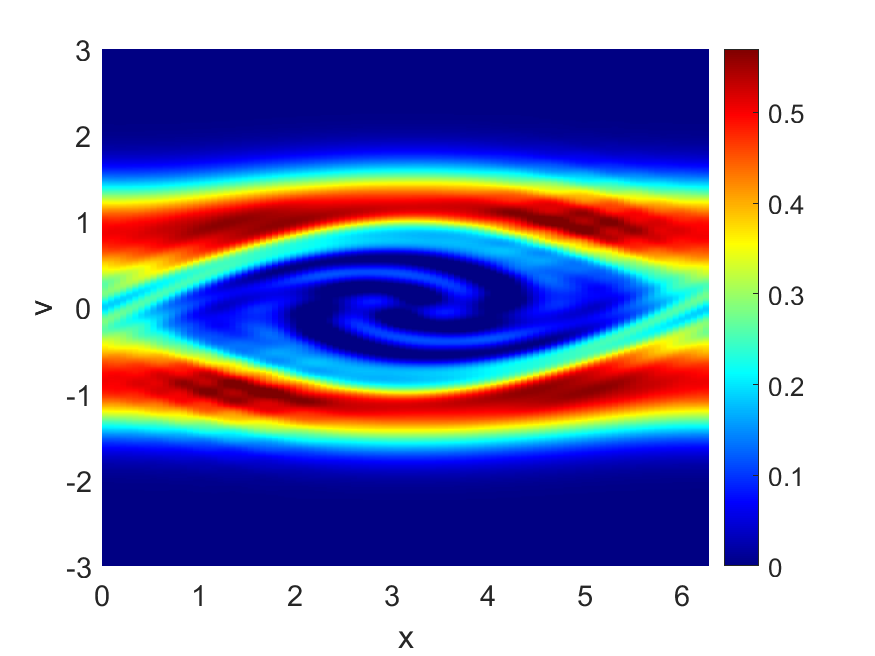}\\
  \includegraphics[width=0.3\textwidth]{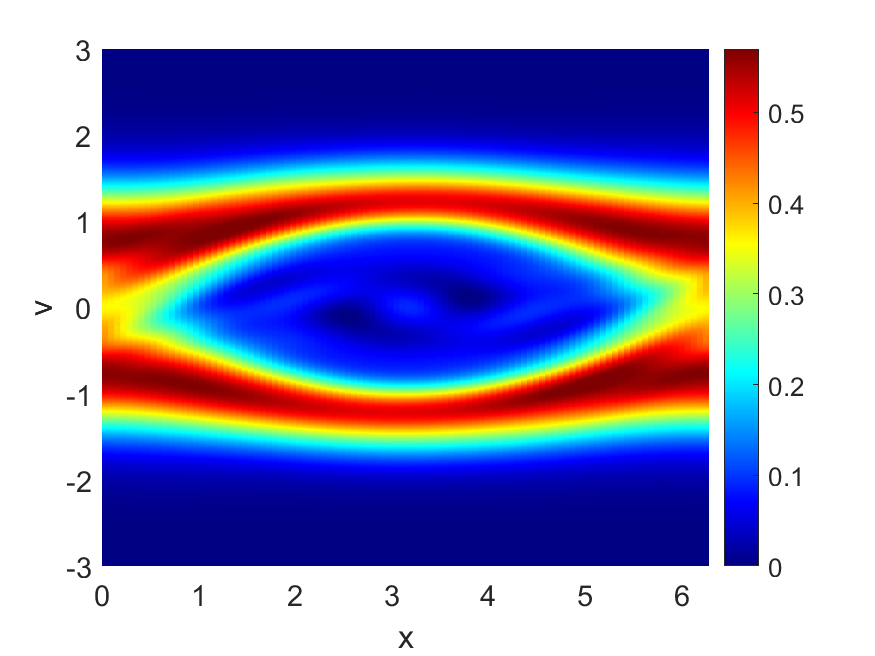}
  \includegraphics[width=0.3\textwidth]{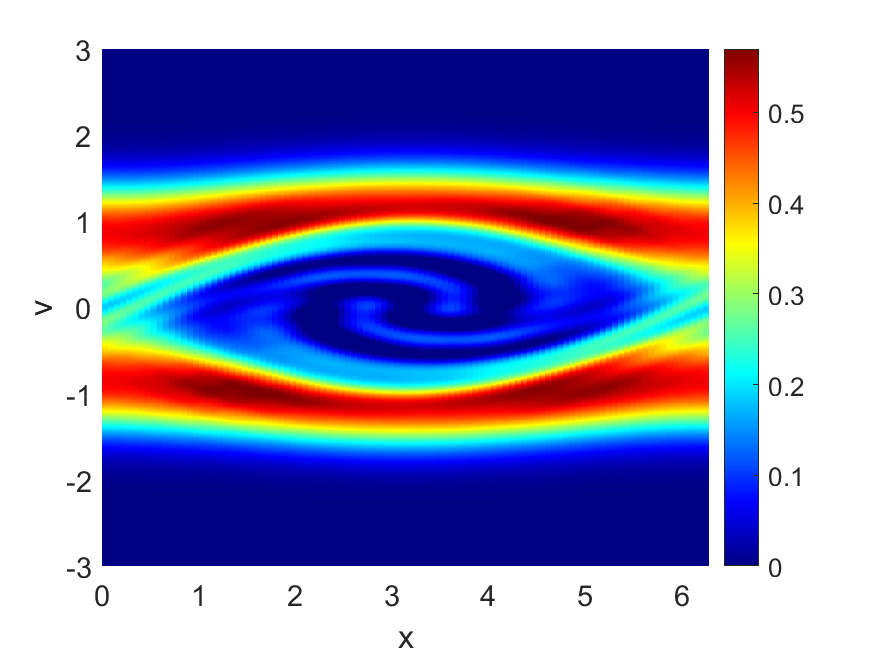}\\
  \caption{Two-stream instability.  Distribution function at time $t=50$ obtained with the adaptive (first column) and non-adaptive (second column) schemes. The value of the collisional coefficient $\nu$ ranges in the set $\{0,2,4,6,8,10\}$ from top to bottom. The number of spectral modes is $(N_v,N_x)=(100,50)$.}
  \label{fig:TSIfnu}
\end{figure}

In Table~\ref{tab:TSIfmaxmin} the maximum and minimum values of the distribution function for the electron species in the bounded domain $\Omega:=[0,L]\times[-3,3]$  are reported for different values of the collisional coefficient $\nu$.
The adaptive algorithm preserves the maximum value of the distribution function quite accurately, with a relative error smaller than $5\%$ for $\nu\geq 3$. For the non-adaptive algorithm, on the other hand, the maximum value of the distribution function varies wildly, but with a decreasing trend with $\nu$. For $\nu=5$, the maximum value of the distribution function is $34\%$ higher than the initial value. For higher values of $\nu$, it becomes more accurate and the relative error approaches $\sim 3\%$ for $\nu\ge10$. For both methods the minimum value of the distribution function is negative across all values of $\nu$ considered but the adaptive method performs significantly better.

\begingroup
\tabcolsep = 8pt
\def\arraystretch{1.1}
\begin{table}[H]
\centering
  \begin{tabular}{c | c c | c c}
    & \multicolumn{2}{c|}{$f^N_{\min}(t=50)$} & \multicolumn{2}{c}{$f^N_{\max}(t=50)$}  \\
    $\nu$ &\; non-adaptive\; &\; adaptive\; & \;non-adaptive\; & adaptive\\
    \hline
    $0$  \;& $-25.699$  & $-0.3286$ & $25.437$ & $0.9100$\\
    $1$  \;& $-7.3866$  & $-0.0743$ & $7.6759$ & $0.6811$ \\
    $2$  \;& $-2.3657$  & $-0.0368$ & $2.7673$ & $0.6290$ \\
    $3$  \;& $-0.8270$  & $-0.0206$ & $1.4128$ & $0.6029$ \\
    $4$  \;& $-0.2777$  & $-0.0208$ & $0.9465$ & $0.5884$ \\
    $5$  \;& $-0.1383$  & $-0.0275$ & $0.7535$ & $0.5799$ \\
    $6$  \;& $-0.1067$  & $-0.0350$ & $0.6627$ & $0.5798$\\
    $7$  \;& $-0.0973$  & $-0.0400$ & $0.6179$ & $0.5807$ \\
    $8$  \;& $-0.0930$  & $-0.0433$ & $0.5952$ & $0.5813$ \\
    $9$  \;& $-0.0994$  & $-0.0454$ & $0.5841$ & $0.5818$ \\
    $10$ \;& $-0.1088$  & $-0.0459$ & $0.5803$ & $0.5823$ \\
    $11$ \;& $-0.1219$  & $-0.0454$ & $0.5784$ & $0.5826$ \\
    $12$ \;& $-0.1338$  & $-0.0438$ & $0.5776$ & $0.5829$ \\
    $13$ \;& $-0.1495$  & $-0.0426$ & $0.5773$ & $0.5831$ \\
  \end{tabular}
  \caption{Maximum and minimum values of the distribution function for the electron species in the bounded domain $\Omega=[0,L]\times[-3,3]$ for different values of the collisional coefficient $\nu$.}
  \label{tab:TSIfmaxmin}
\end{table}
\endgroup

The evolution of the Hermite parameters (not shown) is basically independent of
the amplitude of the collisional term, confirming that it is affected predominantly by the macroscopic behavior of the numerical solution and not so much by the higher order Hermite modes which are damped more heavily by the specific form of the collisional operator chosen.

In Figure~\ref{fig:TSIEfield} the electric field is plotted on the
spatial domain $\Omega_x$ at time $t=50$ for different values of $\nu$.
The right panel shows that the non-adaptive scheme leads to spurious oscillations of the electric field if the collisional coefficient $\nu$ is not large enough. In the adaptive scheme, on the other hand, only the case $\nu=0$ shows some spurious oscillations. 
Note that the evolution of the $L_2$-norm of the electric field, and in particular its slope, is independent of the collision coefficient $\nu$ and the growth rate reproduced in all cases (the corresponding plot is not reported since the results obtained with different values of $\nu$ are visually indistinguishable).
Overall, by comparing the behavior of the adaptive and non-adaptive schemes, we conclude that the adaptive scheme performs better and requires lower value of the artificial collisionality to remove filamentation.

\begin{figure}[H]
  \centering
  \includegraphics[width=0.45\textwidth]{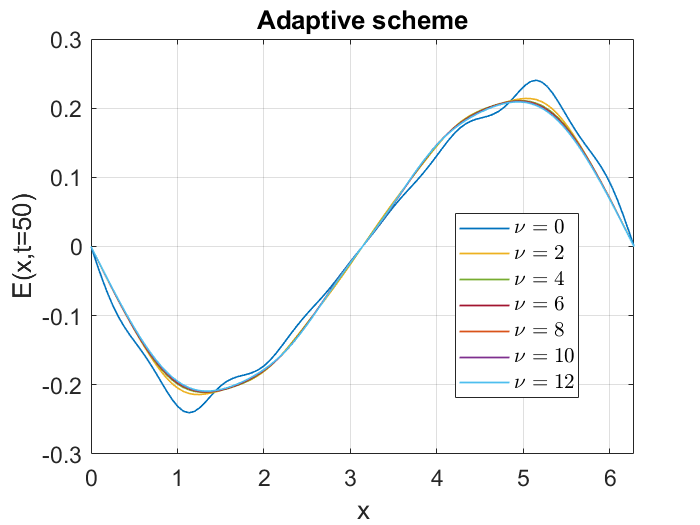}
  \includegraphics[width=0.45\textwidth]{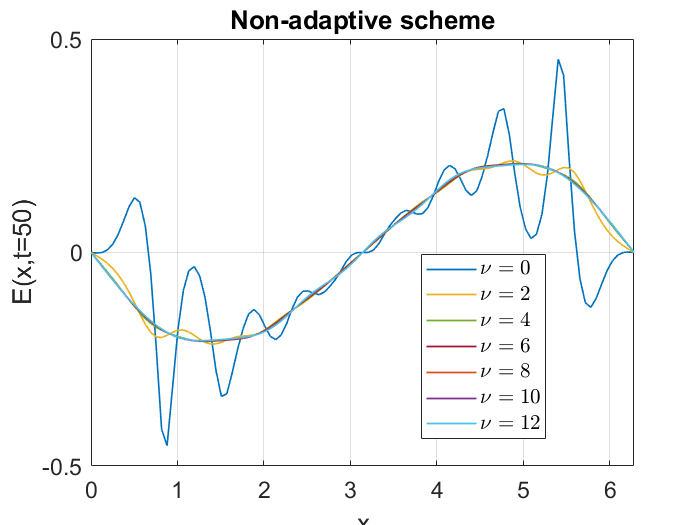}
  \caption{Two-stream instability. Electric field $E$ at time $t=50$ as a function of the space variable $x\in[0,L]$, for the adaptive (left) and non-adaptive (right) schemes. Different values of the collisional coefficient
    $\nu\in\{0,2,4,6,8,10,12\}$ are considered.}
  \label{fig:TSIEfield}
\end{figure}

\end{document}